\theoremstyle{plain}% default
\newtheorem{thm}{Theorem}[section]
\newtheorem{lem}[thm]{Lemma}
\newtheorem{prop}[thm]{Proposition}
\newtheorem{cor}[thm]{Corollary}
\theoremstyle{definition}
\newtheorem{exmp}{Example}
\theoremstyle{remark}
\newtheorem{rem}{Remark}[section]
\title{Gauss-Manin Connections for Boundary Singularities and Isochore Deformations}
\author{Konstantinos Kourliouros,\\
Imperial College London, United Kingdom}
\begin{document}

\maketitle

\begin{abstract}
We study here the relative cohomology and the Gauss-Manin connections associated to an isolated singularity of a function on a manifold with boundary, i.e. with a fixed hyperplane section. We prove several relative analogs of classical theorems obtained mainly by E. Brieskorn and B. Malgrange, concerning the properties of the Gauss-Manin connection as well as its relations with the Picard-Lefschetz monodromy and the asymptotics of integrals of holomorphic forms along the vanishing cycles. Finally, we give an application in isochore deformation theory, i.e. the deformation theory of boundary singularities with respect to a volume form.
In particular we prove the relative analog of J. Vey's isochore Morse lemma,  J. -P. Fran\c{c}oise's generalisation on the local normal forms of volume forms with respect to the boundary singularity-preserving diffeomorphisms, as well as M. D. Garay's theorem on the isochore version of Mather's versal unfolding theorem.  
\end{abstract}

%\tableofcontents

\section{Introduction}

In this paper we study the Gauss-Manin connections on the relative cohomology of an isolated boundary singularity, i.e. of an isolated singularity of a function in the presence of a fixed hyperplane section, called ``the boundary'' as is usual in the literature (c.f.  \cite{A0}, \cite{A00}, \cite{A1}, \cite{A2}, \cite{Mat0}, \cite{Mat}, \cite{Szp}, \cite{Szp1} for several classification results and topological properties). Apparently, a detailed description of the Gauss-Manin connections for boundary singularities has not yet been treated, except the closely related studies \cite{Duc}, (and also \cite{Ph} and references therein) on the Gauss-Manin systems with boundary and regular analytic interactions of pairs of Lagrangian manifolds.  Here we give a generalisation, for the boundary case, of some fundamental results obtained mainly by E. Brieskorn \cite{B}, M. Sebastiani \cite{S} and B. Malgrange \cite{Mal}. More specifically we prove a relative analog of the Brieskorn-Deligne-Sebastiani theorem, concerning the finiteness and freeness of the de Rham cohomology modules and of the corresponding Brieskorn lattices associated to the boundary singularity (Theorems \ref{t1}, \ref{t3}). We also give a relative analog of the regularity theorem (Theorem \ref{t2}) according to which, the restriction of the natural Gauss-Manin connection on the localisation of the Brieskorn modules at the critical value, has regular singularities. According to the work of Brieskorn \cite{B}, the regularity of the Gauss-Manin connection, along with the algebraicity theorem and the positive solution of Hilbert's VII'th problem, give also a direct analytic proof of a relative version of the monodromy theorem (Theorem \ref{t}), i.e. that the eigenvalues of the Picard-Lefschetz monodromy operator in the relative vanishing cohomology, are indeed roots of unity. Following Malgrange \cite{Mal}, we show that the relative monodromy theorem along with the regularity theorem, give also the asymptotic expansion of the integrals of holomorphic forms along the vanishing cycles and half-cycles of the boundary singularity, when the values of the function tend to the critical one (Theorem \ref{t4}).  

These results in turn can be viewed as the first steps for the establishment of several important invariants for boundary singularities, extending those for the ordinary (i.e. without boundary) singularities, such as the spectrum, the spectral pairs and eventually, the mixed Hodge structure in the relative vanishing cohomology (c.f. \cite{Ste}, \cite{Var1}). Here we don't take this step but instead we give a direct application in isochore deformation theory, i.e. the deformation theory of boundary singularities with respect to a volume form. In particular we prove a relative analog of a J. Vey's isochore Morse lemma \cite{V}, J. -P. Fran\c{c}oise's generalisation on the local normal forms of volume forms with respect to the singularity preserving diffeomorphisms \cite{F0}, \cite{F1} (see also \cite{F2}), as well as M. D. Garay's isochore version of Mather's unfolding theorem \cite{G1}. For further possible applications of these theorems c.f. \cite{C}, \cite{G2} and references therein.

It is important to notice finally that there are two natural ways to study a boundary singularity. The first one is due to Arnol'd \cite{A1} according to which a boundary singularity can be viewed as an ordinary $\mathbb{Z}_2$-symmetric singularity after passing to the double covering space branched along the boundary (see also \cite{W} and \cite{Gor} for generalisations for other symmetric singularities). There is also another approach due to A. Szpirglas \cite{Szp}, \cite{Szp1}, according to which a boundary singularity can be viewed, at least in a (co)homological level, as an extension of two ordinary singularities, namely the ambient singularity and its restriction on the boundary. Our approach is in accordance with  the second one, i.e. we show that the relative cohomology, the relative Gauss-Manin connection and the corresponding Brieskorn lattices associated to a boundary singularity, are indeed extensions of the corresponding ordinary objects associated to the pair of isolated singularities.

\section{Relative Cohomology, Brieskorn Modules and Gauss-Manin Connections for Boundary Singularities}

We review first some basic facts concerning the topology of isolated boundary singularities.

\subsection{Milnor Numbers, (Co)homological Milnor Bundles and Topological Gauss-Manin Connections}

 Let $f:(\mathbb{C}^{n+1},0)\rightarrow (\mathbb{C},0)$ be a holomorphic function germ and let $H=\mathbb{C}^{n}\subset \mathbb{C}^{n+1}$ be a hyperplane section at the origin, which we call ``the boundary'', such that either $f$ or/and its restriction $f|_H$ on the boundary has an isolated critical point at the origin. Fix a coordinate system $(x,y_1,...,y_n)$ such that the equation of the boundary is given by $H=\{x=0\}$. The multiplicity  $\mu_{f,H}$ of the critical point, or else, the Milnor number of the boundary singularity, is the dimension of the local algebra:
\[\mathcal{Q}_{f,H}=\frac{\mathcal{O}_{n+1}}{(x\frac{\partial f}{\partial x},\frac{\partial f}{\partial y_1},...,\frac{\partial f}{\partial y_n})}, \hspace{0.3cm} \mu_{f,H}=\dim_{\mathbb{C}}\mathcal{Q}_{f,H}.\]
The Milnor number of the boundary singularity is related to the ordinary Milnor number $\mu_{f}$ of $f$:
\[\mathcal{Q}_{f}=\frac{\mathcal{O}_{n+1}}{(\frac{\partial f}{\partial x},\frac{\partial f}{\partial y_1},...,\frac{\partial f}{\partial y_n})}, \hspace{0.3cm} \mu_{f}=\dim_{\mathbb{C}}\mathcal{Q}_{f},\]
and the Milnor number $\mu_{f|_H}$ of its restriction on the boundary:
\[\mathcal{Q}_{f|_H}=\frac{\mathcal{O}_{n}}{(\frac{\partial f}{\partial y_1}|_{x=0},...,\frac{\partial f}{\partial y_n}|_{x=0})}, \hspace{0.3cm} \mu_{f|_H}=\dim_{\mathbb{C}}\mathcal{Q}_{f,H},\]
by the formula (c.f. \cite{A1}, \cite{Szp}, \cite{W}):
\[\mu_{f,H}=\mu_{f}+\mu_{f|_H}.\]

The Milnor number of a boundary singularity is an important topological invariant; let $B^{n+1}_r$ be a sufficiently small ball at the origin of $\mathbb{C}^{n+1}$ and choose a holomorphic representative $g:B^{n+1}_r\rightarrow T=g(B^{n+1}_r)$ such that its restriction $g':B^n_r\rightarrow T$ on the boundary ball $B^n_r=B^{n+1}_r\cap H$ is a holomorphic representative of the germ $f|_H$. By choosing the radius of the ball appropriately, as well as the representatives $(g,g')$, we may succeed that:
\begin{itemize}
\item the pair of fibers $(g^{-1}(0),g'^{-1}(0))$ is transversal to the pair of boundary spheres $(\partial B^{n+1}_{\epsilon}, \partial B^{n}_{\epsilon})$ for all $\epsilon < r$, and it has an isolated singularity at the origin (the fiber $g^{-1}(0)$ might be smooth but not transversal to the hyperplane $H$),
\item the pair of fibers $(g^{-1}(t),g'^{-1}(t))$ is smooth and transversal to the boundary spheres $(\partial B^{n+1}_{\epsilon}, \partial B^{n}_{\epsilon})$ for some $\epsilon$  over all points $t\in \bar{S}$ of the closure of a sufficiently small open disc $S\subset T$ centered at the origin. 
\end{itemize}
The standard representative $f:X\rightarrow S$ is obtained by restricting $g$ to $X=\mathring{B}^{n+1}_{\epsilon}\cap g^{-1}(S)$ and is such that its restriction $f':X'=X\cap H\rightarrow S$ is a standard representative of $f|_H$ in the sense that it is obtained by the restriction of $g'$ on $X'=\mathring{B}^n_{\epsilon}\cap g'^{-1}(S)$. Thus one obtains a diagram of standard representatives:
\begin{center}.
\xymatrix{
X  \ar[r]^f & S\\
X' \ar@{^{(}->}[u]^i \ar[ru]_{f'}}.
\end{center}
which we denote by $(f,f'):(X,X')\rightarrow S$. We will call it the standard (or Milnor) representative of the boundary singularity $(f,H)$.

Denote now by $(X_0=f^{-1}(0),X'_0=f'^{-1}(0))$ the pair of singular fibers and let $(X^*=X\setminus X_0,X'^*=X'\setminus X'_0)$ be their corresponding complements. Then for $S^*=S\setminus 0$, the restriction of $(f,f')$ on $(X^*,X'^*)$ induces a $C^{\infty}$-fiber bundle pair (by Ehresmann's fibration theorem), i.e. a diagram of $C^{\infty}$-fiber bundles:
\begin{center}.
\xymatrix{
X^*  \ar[r]^f & S^*\\
X'^* \ar@{^{(}->}[u]^i \ar[ru]_{f'}},
\end{center}
which we denote again by $(f,f'):(X^*,X'^*)\rightarrow S^*$. Let $(X_t=f^{-1}(t),X'_t=f'^{-1}(t))$ be a pair of regular fibers. In particular the fiber $X_t$ is smooth and transversal to the boundary $X'$, so that its intersection $X'_t$ with the boundary is a smooth submanifold of both $X'$ and $X_t$. According to a theorem of Arnol'd \cite{A2} which generalises the Milnor-Palamodov theorem \cite{Mil}, \cite{Pal} for the boundary case, the manifold $X_t/X'_t$ has the homotopy type of a bouquet of $\mu_{f,H}$ $n$-dimensional spheres, where $\mu_{f,H}=\dim_{\mathbb{C}}\mathcal{Q}_{f,H}$ is the Milnor number of the boundary singularity $(f,H)$. In particular, $\mu_{f,H}$ is exactly equal to the rank of the relative homology group $H_n(X_t,X'_t)$ (it can be considered with integer coefficients). The equality $\mu_{f,H}=\mu_{f}+\mu_{f|_H}$ follows then from the long exact sequence in homology induced by the embedding $i_t: X'_t\hookrightarrow X_t$ and the Milnor-Palamodov theorem for the pair $(f,f')$ respectively, according to which: 
\[H_n(X_t)\cong \mathbb{Z}^{\mu_{f}}, \hspace{0.3cm} H_{n-1}(X'_t)\cong \mathbb{Z}^{\mu_{f|_H}}\]
(all other homologies of $X_t$ and $X'_t$ are zero, except in zero degree).  Indeed, the long exact homology sequence reduces to the short exact sequence: 
\[0 \rightarrow H_n(X_t)\rightarrow H_n(X_t,X'_t)\stackrel{\partial}{\rightarrow} H_{n-1}(X'_t)\rightarrow 0\]
and thus
\[H_n(X_t,X'_t)\cong \mathbb{Z}^{\mu_{f}+\mu_{f|_H}}.\]
A basis of the relative homology group $H_n(X_t,X'_t)$ is obtained by the $\mu_{f}$ ordinary vanishing cycles of $f$  and the $\mu_{f|_H}$ vanishing half-cycles, i.e. those relative cycles of $X_t$ which cover  the $\mu_{f|_H}$ ordinary vanishing cycles of $f|_H$  inside $X_t\setminus X'_t$  (c.f. \cite{A2}, \cite{Szp}). 

By obvious duality, to the short exact homology sequence above there corresponds a short exact sequence in cohomology:
\begin{equation}
\label{ses0}
0\rightarrow H^{n-1}(X'_t)\stackrel{\delta}{\rightarrow}H^n(X_t,X'_t)\rightarrow H^n(X_t)\rightarrow 0,
\end{equation}
with the standard formal adjoint formula for the boundary and coboundary operators $(\partial,\delta)$:
\[<\delta \alpha, \gamma >=<\alpha, \partial \gamma>,\]
where $<.,.>$ is the natural duality morphism between relative homology and  cohomology:
\[<.,.>:H^n(X_t,X'_t)\times H_n(X_t,X'_t)\rightarrow \mathbb{Z}.\]

 In order to study the variations in cohomology of the Milnor fibers as $t$ varies in $S^*$  it is convenient to consider the cohomologies above as with complex coefficients, and endowed with their canonical integral lattices. Since the pair $(f,f'):(X^*,X'^*)\rightarrow S^*$ is a $C^{\infty}$-fiber bundle pair over the 1-dimensional manifold $S^*$, the vector spaces $H^p(X_t;\mathbb{C})$, $H^p(X'_t;\mathbb{C})$ and $H^p(X_t,X'_t;\mathbb{C)}$,  glue together to form the fibers of the corresponding cohomological (or Milnor) vector bundles:
\[\bigcup_{t\in S^*}H^{p}(X_t;\mathbb{C})\rightarrow S^*,\] 
\[\bigcup_{t\in S^*}H^{p}(X'_t;\mathbb{C})\rightarrow S^*,\]
\[\bigcup_{t\in S^*}H^{p}(X_t,X'_t;\mathbb{C})\rightarrow S^*.\]
The transition functions in each of these bundles are locally constant (because of integrality) and thus the vector bundles above are holomorphic flat vector bundles, each endowed with its own topological Gauss-Manin connection, defined by the condition that the horizontal sections are generated by the corresponding local systems $R^pf_*\mathbb{C}_{X^*}$, $R^pf_*\mathbb{C}_{X'^*}$ and $R^pf_*\mathbb{C}_{X^*\setminus X'^*}$, where the sheaves $\mathbb{C}_{X'^*}$, $\mathbb{C}_{X^*\setminus X'^*}$ are the extensions by zero of the restrictions of the constant sheaf $\mathbb{C}_{X^*}$ on the closed subspace $X'^*$ and its open complement $X^*\setminus X'^*$ respectively.  In particular, if we consider the sheaves of sections of each of the cohomological fibrations:
\[\mathcal{H}^p(X^*/S^*)=R^pf_*\mathbb{C}_{X^*}\otimes_{\mathbb{C}_{S^*}}\mathcal{O}_{S^*}\]
\[\mathcal{H}^p(X'^*/S^*)=R^pf_*\mathbb{C}_{X'^*}\otimes_{\mathbb{C}_{S^*}}\mathcal{O}_{S^*}\]
and
\[\mathcal{H}^p(X^*,X'^*/S^*)=R^pf_*\mathbb{C}_{X^*\setminus X'^*}\otimes_{\mathbb{C}_{S^*}}\mathcal{O}_{S^*},\]
then, the (topological) Gauss-Manin connections are defined by the conditions:
\[R^pf_*\mathbb{C}_{X^*}=\ker D_f, \hspace{0.2cm}R^pf_*\mathbb{C}_{X'^*}=\ker D_{f|_H},\]
\[R^pf_*\mathbb{C}_{X^*\setminus X'^*}=\ker D_{f,H},\] 
where
\[ D_f:\mathcal{H}^p(X^*/S^*)\rightarrow \mathcal{H}^p(X^*/S^*), \hspace{0.2cm} D_{f|_H}:\mathcal{H}^p(X'^*/S^*)\rightarrow \mathcal{H}^p(X'^*/S^*),\]
and
\[D_{f,H}:\mathcal{H}^p(X^*,X'^*/S^*)\rightarrow \mathcal{H}^p(X^*,X'^*/S^*),\]
are the covariant derivatives of the corresponding connections. Each one of these connections is determined in turn by differentiating locally constant sections of the corresponding cohomology bundle along the vector field $d/dt$ on the base $S^*$ (where $f=t$ is a local coordinate) by the rule:
\[D(c\otimes g)=c\otimes \frac{dg}{dt}, \]  
where $c$ is a section of the corresponding local system and $g$ is a holomorphic function of $t$. We will call the two Gauss-Manin connections $D_f$ and $D_{f|_H}$ ordinary, and the Gauss-Manin connection $D_{f,H}$ relative.

The cohomological Milnor bundles and the Gauss-Manin connections above are not independent with each other but they are connected through long exact sequences; first there is a long exact sequence of local systems:
\[...\rightarrow R^{p-1}f_*\mathbb{C}_{X'^*}\rightarrow R^pf_*\mathbb{C}_{X^*\setminus X'^*}\rightarrow  R^{p}f_*\mathbb{C}_{X^*}\rightarrow  R^{p}f_*\mathbb{C}_{X'^*}\rightarrow ...,\]
obtained by applying the direct image functor $Rf_*$ to the short exact sequence of constant sheaves:
\[0\rightarrow \mathbb{C}_{X^*\setminus X'^*}\rightarrow \mathbb{C}_{X^*}\rightarrow \mathbb{C}_{X'^*}\rightarrow 0.\] 
There is also a long exact sequence of sheaves of sections of the cohomology bundles:
\begin{equation}
\label{les0}
...\rightarrow \mathcal{H}^{p-1}(X'^*/S^*) \rightarrow \mathcal{H}^{p}(X^*,X'^*/S^*) \rightarrow \mathcal{H}^{p}(X^*/S^*) \rightarrow \mathcal{H}^{p}(X'^*/S^*) \rightarrow ...
\end{equation}
obtained by the long exact sequence of local systems above after tensoring with $\otimes_{\mathbb{C}_{S^*}}\mathcal{O}_{S^*}$. In particular, the long exact sequence of the cohomology sheaves  is a long exact sequence of locally free sheaves of coherent $\mathcal{O}_{S^*}$-modules which, according to Milnor's (or Arnol'd's) theorem reduces to the short exact sequence:
\begin{equation}
\label{ses1}
0\rightarrow \mathcal{H}^{n-1}(X'^*/S^*)\rightarrow \mathcal{H}^n(X^*,X'^*/S^*)\rightarrow \mathcal{H}^n(X^*/S^*)\rightarrow 0.
\end{equation}
It follows  that the relative cohomology sheaf $\mathcal{H}^n(X^*,X'^*/S^*)$ is an extension of the sheaf $\mathcal{H}^{n-1}(X'^*/S^*)$ by $\mathcal{H}^n(X^*/S^*)$ and the relative Gauss-Manin connection $D_{f,H}$ on it is an extension of the two ordinary Gauss-Manin connections $D_{f|_H}$, $D_{f}$. In particular the restriction of the relative Gauss-Manin connection $D_{f,H}$ on the sheaf $\mathcal{H}^{n-1}(X'^*/S^*)$ can be identified with the ordinary Gauss-Manin connection $D_{f|_H}$ while the quotient connection induced on $\mathcal{H}^n(X^*/S^*)$ can be identified with the ordinary Gauss-Manin connection $D_f$.

 On the other hand, it is well known (c.f. \cite{Del}) that any local system on $S^*$ with a flat connection is determined by the monodromy, i.e. the representation of the fundamental group $\pi_1(S^*,t)$ on its fibers, and conversely, the monodromy determines the connection.   Here we may choose the standard representatives $(f,f')$ in such a way so that the geometric monodromy on the fibers $X_t$ induced by travelling once around the origin in the positive direction, leaves the subfiber $X'_t$ invariant. Thus we obtain representations of the fundamental group $\pi_1(S^*,t)=\mathbb{Z}$ in the group of automorphisms of the fibers of the corresponding cohomological bundles. Let $T_{f|_H}\in \text{Aut}H^{n-1}(X'_t;\mathbb{C})$, $T_{f}\in \text{Aut}H^{n}(X_t;\mathbb{C})$ be the ordinary linear transformations in cohomology, i.e. the well known  Picard-Lefschetz monodromy transformations, and denote by $T_{f,H}\in \text{Aut}H^n(X_t,X'_t;\mathbb{C})$ the linear transformation induced in relative cohomology. We will call this transformation the relative Picard-Lefschetz monodromy (as in \cite{Szp}). By the above, it is an extension of the two ordinary Picard-Lefschetz monodromies, i.e. there is a commutative diagram:
\begin{equation}
\begin{CD}
0 @>>>H^{n-1}(X'_t;\mathbb{C}) @>\delta >> H^n(X_t,X'_t;\mathbb{C}) @>p>> H^n(X_t;\mathbb{C}) @>>> 0 \\
@.             @ V T_{f|_H} VV                           @ V T_{f,H} VV                 @ VT_f VV          @.          \\
0 @>>>H^{n-1}(X'_t;\mathbb{C}) @>\delta >> H^n(X_t,X'_t;\mathbb{C}) @>p>> H^n(X_t;\mathbb{C}) @>>> 0\\
\end{CD}
\end{equation}
By the fact that both $T_{f|_H}$ and $T_f$ are isomorphisms it follows that $T_{f,H}$ is also an isomorphism. Concerning its eigenvalues we have the following relative analog of the monodromy theorem:
\begin{thm}
\label{t}
The eigenvalues of the relative monodromy operator $T_{f,H}$ are roots of unity. 
\end{thm}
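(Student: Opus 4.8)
The plan is to record first the quick structural argument and then the self-contained analytic one anticipated in the introduction, which is the proof that actually makes genuine use of the relative Gauss-Manin connection.

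\textbf{Structural reduction.} In a basis of $H^n(X_t,X'_t;\mathbb{C})$ adapted to the subspace $\delta H^{n-1}(X'_t;\mathbb{C})$, the commutative diagram with exact rows displayed above shows that $T_{f,H}$ is block triangular: it leaves $\delta H^{n-1}(X'_t;\mathbb{C})$ invariant and acts there by a conjugate of $T_{f|_H}$, while the induced map on the quotient $H^n(X_t;\mathbb{C})$ is $T_f$. Hence
\[
\det\!\big(\lambda\cdot\mathrm{id}-T_{f,H}\big)=\det\!\big(\lambda\cdot\mathrm{id}-T_{f|_H}\big)\cdot\det\!\big(\lambda\cdot\mathrm{id}-T_f\big),
\]
so the spectrum of $T_{f,H}$ is the union of the spectra of the two ordinary Picard-Lefschetz monodromies. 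Since $T_f$ and $T_{f|_H}$ are quasi-unipotent by the classical monodromy theorem for the isolated hypersurface singularities $f$ and $f|_H$, every eigenvalue of $T_{f,H}$ is a root of unity.

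\textbf{Analytic route (after Brieskorn).} For an argument intrinsic to $D_{f,H}$, I would first reduce, using finite determinacy of the boundary singularity $(f,H)$ and the fact that $T_{f,H}$ is an invariant of its equivalence class, to the case where $f$ is a polynomial map and $H$ a coordinate hyperplane; then the relative de Rham cohomology and the relative Gauss-Manin connection $D_{f,H}$ are defined over $\mathbb{Q}$ (the algebraicity theorem). By Theorem~\ref{t2} the localization of $D_{f,H}$ at the critical value has a regular singularity, so there is a $\mathbb{Q}[t]$-lattice on which the connection has at worst a logarithmic pole, with residue endomorphism $R$ represented by a matrix over $\mathbb{Q}$; consequently the eigenvalues $\alpha_1,\dots,\alpha_{\mu_{f,H}}$ of $R$ are algebraic numbers, and the eigenvalues of $T_{f,H}$ are the numbers $e^{-2\pi i\alpha_j}$. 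On the other hand $T_{f,H}$ preserves the integral lattice $H^n(X_t,X'_t;\mathbb{Z})$, so each $e^{-2\pi i\alpha_j}$ is in particular an algebraic integer. If some $\alpha_j$ were irrational then, writing $e^{-2\pi i\alpha_j}=(-1)^{-2\alpha_j}$ with $-1$ algebraic and different from $0,1$ and with $-2\alpha_j$ algebraic and irrational, the Gelfond-Schneider theorem (the positive solution of Hilbert's VII'th problem) would force $e^{-2\pi i\alpha_j}$ to be transcendental, a contradiction. Hence every $\alpha_j\in\mathbb{Q}$ and $T_{f,H}$ is quasi-unipotent.

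\textbf{Main obstacle.} The structural argument is immediate once the commutative diagram is available, and the number-theoretic endgame of the analytic argument is identical to Brieskorn's; the step that actually requires care is the algebraization — realizing the relative de Rham cohomology and the relative Brieskorn lattice over $\mathbb{Q}$ as an extension of the two ordinary ones (the algebraic, lattice-level upgrade of the exact sequence \eqref{ses1}) and verifying that the regular-singularity statement of Theorem~\ref{t2} is compatible with this rational structure. I expect this bookkeeping, rather than any new conceptual difficulty, to be the principal thing to check.
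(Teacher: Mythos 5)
Your structural argument is exactly the paper's proof: the commutative diagram with exact rows makes $T_{f,H}$ block-triangular, so its characteristic polynomial is the product of those of $T_{f|_H}$ and $T_f$, and the classical monodromy theorem for the two ordinary isolated singularities finishes it. Your analytic route is likewise the one the paper only sketches in Remark \ref{r2} (regularity plus algebraicity plus Gelfond--Schneider), so both halves match the paper's treatment.
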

The proof follows immediately by the fact that the characteristic polynomial of $T_{f,H}$ is the product of the characteristic polynomials of $T_{f|_H}$ and $T_{f}$, whose roots are, by the ordinary monodromy theorem  (c.f. Brieskorn \cite{B}), roots of unity. Another straightforward analytic proof of the relative monodromy theorem may be derived, following Brieskorn, by the results of the next sections (see Remark \ref{r2}).
\begin{rem}
The statement of the theorem above is, as is usually called, the first part of the monodromy theorem. The second part, concerning the bound on the maximal size of the Jordan blocks, is more complicated and it will not be discussed here. Possibly, a sharper bound than the obvious one $\leq n-1+n=2n-1$, may be obtained either using resolution of singularities and a Clemens construction as in \cite{Cl}, or using the eventual mixed Hodge structure on the vanishing relative cohomology $H^n(X_t,X'_t;\mathbb{C})$ (as for example in  \cite{Ste}, \cite{Var1}).    
\end{rem}

\subsection{Relative de Rham Cohomology, Analytic Gauss-Manin Connections and Brieskorn Modules}

Since the pair of Milnor fibers $(X_t,X'_t)$ is Stein, its cohomologies can be computed using holomorphic differential forms and the corresponding relative de Rham cohomologies.

\subsubsection{The Brieskorn-Deligne Theorem for Boundary Singularities}

Recall that for a single morphism $f:X\rightarrow S$ the complex of holomorphic relative differential forms $\Omega^{\bullet}_{X/S}$ is defined as the quotient complex (c.f. \cite{Gro}):
\[\Omega^{\bullet}_{X/S}=\frac{\Omega^{\bullet}_X}{df\wedge \Omega^{\bullet-1}_X},\]
where $\Omega^{\bullet}_X$ is the complex of holomorphic forms on $X$ and $f^*\Omega^1_{S}=df$ is the ideal sheaf generated by the differential of $f$. The differential $d$ (called the relative differential and denoted also by $d_{X/S}$) of the relative de Rham complex $\Omega^{\bullet}_{X/S}$ is the one induced by the absolute differential $d_X$ of the complex $\Omega^{\bullet}_X$ and it is $f^{-1}\mathcal{O}_S$-linear.  For a pair of standard representatives $(f,f'):(X,X')\rightarrow S$, one may define several other relative de Rham complexes, with the most obvious one being the relative de Rham complex $\Omega^{\bullet}_{X'/S}$ of the map $f':X'\rightarrow S$, viewed independently of the embedding $i:X'\hookrightarrow X$. Indeed, we have as above:
\[\Omega^{\bullet}_{X'/S}:=\frac{\Omega^{\bullet}_{X'}}{df'\wedge \Omega^{\bullet-1}_{X'}},\]
where the relative differential $d_{X'/S}$ is induced by the differential $d_{X'}$ and it is also $f'^{-1}\mathcal{O}_{S}$-linear. Consider now its extension by zero $i_*\Omega^{\bullet}_{X'/S}$ in $X$. Since $X'$ is closed and smooth we have an epimorphism of analytic modules, which is the restriction morphism induced by the pullback map:
\[i^*:\Omega^{\bullet}_{X/S}\rightarrow i_*\Omega^{\bullet}_{X'/S}.\]
The kernel of this morphism is the subcomplex $\Omega^{\bullet}_{X/S}(X')\subset \Omega^{\bullet}_{X/S}$ consisting of relative differential forms whose support lies in the complement $X\setminus X'$ and in particular they vanish when restricted to the hypersurface $X'$. More specifically, let $\Omega^{\bullet}_{X}(X')\subset \Omega^{\bullet}_X$ be the subcomplex of holomorphic forms on $X$ which vanish when restricted on $X'$. This fits in a short exact sequence of complexes:
\[0\rightarrow \Omega^{\bullet}_{X}(X')\rightarrow \Omega^{\bullet}_X\rightarrow i_*\Omega^{\bullet}_{X'}\rightarrow 0,\]
from which we obtain the obvious isomorphism:
\[i_*\Omega^{\bullet}_{X'}\cong \frac{\Omega^{\bullet}_X}{\Omega^{\bullet}_X(X')}\]
(notice that by definition, the complex of holomorphic forms on $X'$ can be identified with the restriction on $X'$ of the above quotient complex).
Consider now muliplication with $df\wedge $ in the short exact sequence above. It gives a commutative diagram:
\begin{equation}
\label{cd3}
\begin{CD}
@. 0   @. 0 @. 0 \\
@. @VVV           @VVV               @VVV               \\
0 @>>>df\wedge \Omega^{\bullet-1}_{X}(X') @>>>df\wedge \Omega^{\bullet-1}_{X} @>>> i_*(df'\wedge \Omega^{\bullet-1}_{X'}) @>>> 0 \\
@. @VVV           @VVV               @VVV               \\
0 @>>>\Omega^{\bullet}_{X}(X') @>>> \Omega^{\bullet}_{X} @>>> i_*\Omega^{\bullet}_{X'} @>>> 0\\
@. @VVV           @VVV               @VVV               \\
0 @>>> \Omega^{\bullet}_{X/S}(X') @>>>  \Omega^{\bullet}_{X/S} @>>>  i_*\Omega^{\bullet}_{X'/S} @>>> 0\\
@. @VVV           @VVV               @VVV               \\
@. 0   @. 0 @. 0 \\
\end{CD}
\end{equation}
where the last row consists of the relative de Rham complexes: 
\[\Omega^{\bullet}_{X/S}(X'):=\frac{\Omega^{\bullet}_{X}(X')}{df\wedge \Omega^{\bullet-1}_{X}(X')}, \hspace{0.3cm}\Omega^{\bullet}_{X/S}=\frac{\Omega^{\bullet}_{X}}{df\wedge \Omega^{\bullet-1}_X},\]
\[i_*\Omega^{\bullet}_{X'/S}:=i_*\frac{\Omega^{\bullet}_{X'}}{df'\wedge \Omega^{\bullet-1}_{X'}}.\]
By the fact that all the columns  and the first two rows in the above diagram are exact, it follows from the 9-lemma that the lower sequence of relative de Rham complexes is also exact and thus there is an isomorphism:
\[i_*\Omega^{\bullet}_{X'/S}\cong \frac{\Omega^{\bullet}_{X/S}}{\Omega^{\bullet}_{X/S}(X')},\]
which implies that  the complex $\Omega^{\bullet}_{X/S}(X')$ can be indeed identified with the kernel of the restriction morphism $i^*:\Omega^{\bullet}_{X/S}\rightarrow i_*\Omega^{\bullet}_{X'/S}$. 

Recall now that if $\mathcal{F}^{\bullet}$ is a complex of analytic sheaves on $X$ with an $f^{-1}\mathcal{O}_S$-linear differential, then its cohomology sheaves are defined by the hyperdirect image sheaves $\mathbb{R}^pf_*\mathcal{F}^{\bullet}$, which are defined in turn by the hypercohomology presheaves: 
\[S\supset U\mapsto\mathbb{H}^p(f^{-1}(U), \mathcal{F}^{\bullet}).\] 
Moreover, for a Stein morphism, it follows from Cartan theorems that these do indeed compute the cohomology $H^p(\mathcal{F}^{\bullet})|_{f^{-1}(U)}$. Recall also that if $\mathcal{F}^{\bullet}$ is a complex of analytic sheaves defined on the closed smooth subspace $X'$ with an $f'^{-1}\mathcal{O}_{S}$-linear differential then, if we denote by $i_*\mathcal{F}^{\bullet}$ its extension by zero on $X$, we have a natural isomorphism of cohomology sheaves:
\begin{equation}
\label{gro}
\mathbb{R}^pf_*i_*\mathcal{F}^{\bullet}\cong \mathbb{R}^pf'_*\mathcal{F}^{\bullet}.
\end{equation}
Indeed, this follows from the Groethendieck spectral sequence for the composition $f'=f\circ i$ and the fact that the direct image $i_*$ of a closed embedding is exact (i.e. its higher direct images are all zero).  

Now, if $\mathcal{F}^{\bullet}$ is one of the above complexes of relative forms then we write the relative de Rham cohomology sheaves as:
 \[\mathcal{H}^p_{dR}(X,X'/S)=\mathbb{R}^pf_*\Omega^{\bullet}_{X/S}(X'), \hspace{0.2cm} \mathcal{H}^p_{dR}(X/S)=\mathbb{R}^pf_*\Omega^{\bullet}_{X/S},\]
\[\mathcal{H}^p_{dR}(X'/S)=\mathbb{R}^pf_*i_*\Omega^{\bullet}_{X'/S}\cong \mathbb{R}^pf'_*\Omega^{\bullet}_{X'/S}\] 
respectively, where the last isomorphism follows from the isomorphism  (\ref{gro}) above. The short exact sequence:
\begin{equation}
\label{ses1}
0\rightarrow \Omega^{\bullet}_{X/S}(X')\rightarrow \Omega^{\bullet}_{X/S}\rightarrow i_*\Omega^{\bullet}_{X'/S}\rightarrow 0
\end{equation}
gives, after application of the hyperdirect image functor $\mathbb{R}f_*$, a long exact sequence in cohomology:
\begin{equation}
\label{les1}
...\rightarrow \mathcal{H}_{dR}^{p-1}(X'/S)\stackrel{\delta}{\rightarrow} \mathcal{H}_{dR}^{p}(X,X'/S)\rightarrow \mathcal{H}_{dR}^{p}(X/S)\rightarrow \mathcal{H}_{dR}^{p}(X'/S) \rightarrow ...
\end{equation} 
which possesses the following important properties summarised in the following relative analog of the Brieskorn-Deligne-Sebastiani theorem:
\begin{thm}
\label{t1}

\noindent 
\begin{itemize}
\item[(i.)] The long exact sequence (\ref{les1}) is a long exact sequence of coherent sheaves of locally free $\mathcal{O}_S$-modules. 
\item[(ii.)] It is isomorphic over $S^*$ with the long exact sequence (\ref{les0}) of sheaves of sections of the corresponding cohomological Milnor bundles.
\item[(iii.)] The stalk at the origin of the long exact sequence (\ref{les1}) is isomorphic to the long exact sequence of free $\mathcal{O}_{S,0}$-modules of finite type:
\begin{equation}
\label{les2}
\rightarrow H^{p-1}(\Omega^{\bullet}_{X'/S,0})\stackrel{\delta}{\rightarrow}H^p(\Omega^{\bullet}_{X/S,0}(X',0))\rightarrow H^p(\Omega^{\bullet}_{X/S,0})\rightarrow H^p(\Omega^{\bullet}_{X'/S,0})\rightarrow
\end{equation}
which is the long exact cohomology sequence induced from the stack at the origin of the short exact sequence (\ref{ses1}).
\end{itemize} 
\end{thm}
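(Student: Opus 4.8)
\emph{Strategy.} The plan is to import the classical Brieskorn--Deligne--Sebastiani theorem for the two ordinary singularities $f:X\to S$ and $f':X'\to S$ (both isolated, possibly with vanishing Milnor number) and transport its conclusions to the pair by applying $\mathbb R f_*$ to the short exact sequence of relative de Rham complexes appearing in the last row of (\ref{cd3}),
\[0\to\Omega^{\bullet}_{X/S}(X')\to\Omega^{\bullet}_{X/S}\to i_*\Omega^{\bullet}_{X'/S}\to 0,\]
and chasing the associated long exact sequence (\ref{les1}). From \cite{B}, \cite{S} (and Deligne, see \cite{Del}) I use: the sheaves $\mathbb R^pf_*\Omega^{\bullet}_{X/S}$ and $\mathbb R^pf'_*\Omega^{\bullet}_{X'/S}$ are coherent locally free $\mathcal O_S$-modules, vanishing for $p\neq n$ resp. $p\neq n-1$; over $S^*$ the relative de Rham complexes are resolutions of $f^{-1}\mathcal O_{S^*}$ resp. $f'^{-1}\mathcal O_{S^*}$ (relative Poincar\'e lemma), so the natural comparison maps with the topological cohomology sheaves are isomorphisms; and the stalks at the origin satisfy $(\mathbb R^pf_*\Omega^{\bullet}_{X/S})_0\cong H^p(\Omega^{\bullet}_{X/S,0})$, which vanishes for $p\neq n$ and for $p=n$ is the Brieskorn lattice $\Omega^{n+1}_{X,0}/df\wedge d\Omega^{n-1}_{X,0}$, free of rank $\mu_f$ over $\mathcal O_{S,0}=\mathbb C\{t\}$ (the de Rham lemma gives $H^p(\Omega^{\bullet}_{X/S,0})=0$ for $p\neq n$); likewise for $f'$ with $n$ replaced by $n-1$ and $\mu_f$ by $\mu_{f|_H}$.

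\emph{Part (ii).} First I would extend the relative Poincar\'e lemma to the third complex: over $X^*$ the complex $\Omega^{\bullet}_{X^*/S^*}(X'^*)$ is a resolution of its degree-zero cohomology sheaf which, comparing with the two resolutions above via the long exact sequence of cohomology sheaves of the displayed short exact sequence restricted to $X^*$, is the extension by zero $\mathbb C_{X^*\setminus X'^*}\otimes_{\mathbb C_{X^*}}f^{-1}\mathcal O_{S^*}$. These three degree-zero sheaves fit into the short exact sequence obtained by tensoring $0\to\mathbb C_{X^*\setminus X'^*}\to\mathbb C_{X^*}\to\mathbb C_{X'^*}\to 0$ with $f^{-1}\mathcal O_{S^*}$, compatibly with the resolutions. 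Applying $\mathbb R f_*$, using flatness of $\mathcal O_{S^*}$ over $\mathbb C_{S^*}$ (projection formula) and the isomorphism (\ref{gro}), one gets $\mathcal H^p_{dR}(X^*,X'^*/S^*)\cong R^pf_*\mathbb C_{X^*\setminus X'^*}\otimes_{\mathbb C_{S^*}}\mathcal O_{S^*}=\mathcal H^p(X^*,X'^*/S^*)$ together with the analogous isomorphisms for $X^*/S^*$ and $X'^*/S^*$; since all come from morphisms of complexes they commute with the connecting maps, identifying (\ref{les1}) over $S^*$ with (\ref{les0}).

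\emph{Part (iii).} Taking stalks at $0\in X$ of the displayed short exact sequence of complexes — stalks being exact, and the stalk of $i_*\Omega^{\bullet}_{X'/S}$ at $0\in X'$ being $\Omega^{\bullet}_{X'/S,0}$ — yields $0\to\Omega^{\bullet}_{X/S,0}(X',0)\to\Omega^{\bullet}_{X/S,0}\to\Omega^{\bullet}_{X'/S,0}\to 0$ and hence its long exact cohomology sequence (\ref{les2}). Since $H^{p-1}(\Omega^{\bullet}_{X'/S,0})=0$ unless $p=n$ and $H^p(\Omega^{\bullet}_{X/S,0})=0$ unless $p=n$, the sequence (\ref{les2}) collapses to $0\to H^{n-1}(\Omega^{\bullet}_{X'/S,0})\stackrel{\delta}{\to}H^n(\Omega^{\bullet}_{X/S,0}(X',0))\to H^n(\Omega^{\bullet}_{X/S,0})\to 0$, an extension of free $\mathbb C\{t\}$-modules of ranks $\mu_{f|_H}$ and $\mu_f$; the quotient being free, it splits and the middle term is free of rank $\mu_f+\mu_{f|_H}=\mu_{f,H}$. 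To identify (\ref{les2}) with the germ at $0\in S$ of (\ref{les1}): for each of the three complexes there is a natural map $(\mathbb R^pf_*(\,\cdot\,))_0\to H^p((\,\cdot\,)_0)$, functorial in the complex and hence compatible with both families of connecting homomorphisms; by the classical results it is an isomorphism for $\Omega^{\bullet}_{X/S}$ and for $i_*\Omega^{\bullet}_{X'/S}$, so the five lemma forces it to be an isomorphism for $\Omega^{\bullet}_{X/S}(X')$ as well. In particular $(\mathbb R^pf_*\Omega^{\bullet}_{X/S}(X'))_0=0$ for $p\neq n$ and is free of rank $\mu_{f,H}$ for $p=n$.

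\emph{Part (i) and the main obstacle.} From the vanishing just obtained and (\ref{les1}) one reads off $\mathcal H^p_{dR}(X,X'/S)=0$ for $p\neq n$ and a short exact sequence $0\to\mathcal H^{n-1}_{dR}(X'/S)\to\mathcal H^n_{dR}(X,X'/S)\to\mathcal H^n_{dR}(X/S)\to 0$; coherence of the middle term follows since coherence of $\mathcal O_S$-modules is stable under extensions, and local freeness follows because its stalk at $0$ is free by (iii) and its restriction to $S^*$ is locally free by (ii), so the coherent sheaf $\mathcal H^n_{dR}(X,X'/S)$ on the smooth curve $S$ has free stalks everywhere. The substantive analytic input — finiteness, freeness, the de Rham lemma and the relative Poincar\'e lemma — is imported wholesale from the ordinary case; the real work, and the point where care is needed, is purely homological: making sure the de Rham-to-topological comparison maps and the ``restriction to the germ'' maps are genuinely natural in the three complexes so that the comparison of long exact sequences and the five-lemma arguments are legitimate, and keeping the vanishing ranges straight so that every long exact sequence collapses as claimed.
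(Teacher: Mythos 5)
Your argument is correct and follows essentially the same route as the paper: import the ordinary Brieskorn--Deligne--Sebastiani theorem for $f$ and $f'$, apply $\mathbb{R}f_*$ to the short exact sequence of relative de Rham complexes, identify the stalk at the origin via the natural comparison maps and the five lemma, use the relative Poincar\'e lemma over $S^*$, and obtain local freeness in degree $n$ from the (locally split) extension of the two free ordinary modules --- your only real departure is replacing the paper's forward reference to the Gauss--Manin connection for $p<n$ by the vanishing argument, which is cleaner and self-contained. Two harmless slips worth fixing: $H^0(\Omega^{\bullet}_{X/S,0})$ and $H^0(\Omega^{\bullet}_{X'/S,0})$ do not vanish (each contains $\mathbb{C}\{f\}$), so the collapse of (\ref{les2}) below degree $n$ rests on the surjectivity of the restriction map between them rather than on their vanishing; and $H^n(\Omega^{\bullet}_{X/S,0})$ is the cohomology module $H_f$, not the Brieskorn lattice $\Omega^{n+1}_{X,0}/df\wedge d\Omega^{n-1}_{X,0}$ (they differ by a finite-dimensional piece and agree only after localisation), although only its freeness and rank $\mu_f$ --- which do follow from Sebastiani because $H_f$ embeds in $H''_f$ --- are actually used.
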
  
\begin{proof}

\noindent (i.) (iii.)  Since the singularities are isolated the proof of coherence in (i) as well as the isomorphism at the origin with the long exact sequence (\ref{les2}) in (iii), follows immediately from Kiehl-Verdier type theorems related to the relative constructibility of these sheaves (c.f. \cite{G0}). Alternatively, we know from the ordinary Brieskorn-Deligne theorem that the sheaves $\mathbb{R}^pf_*\Omega^{\bullet}_{X/S}$ and $\mathbb{R}^pf'_*\Omega^{\bullet}_{X'/S}$ are already coherent, from which it follows (by the long exact sequence (\ref{les1})) that the sheaves $\mathbb{R}^pf_*\Omega^{\bullet}_{X/S}(X')$ are coherent as well.  The property (iii) also holds for $\mathbb{R}^pf_*\Omega^{\bullet}_{X/S}(X')$ because it holds for the other two sheaves; indeed if $X_0=f^{-1}(0)$ is the singular fiber, one has a commutative diagram of canonical restriction morphisms:
\[\begin{CD}
\label{cd4}
0 @>>>\Gamma(X_0,\Omega^{\bullet}_{X/S}(X')) @>>> \Gamma(X_0,\Omega^{\bullet}_{X/S}) @>>> \Gamma(X_0,i_*\Omega^{\bullet}_{X'/S}) @>>> 0\\
@. @VVV           @VVV               @VVV               \\
0 @>>> \Omega^{\bullet}_{X/S,0}(X',0) @>>>  \Omega^{\bullet}_{X/S,0} @>>>  i_*\Omega^{\bullet}_{X'/S,0} @>>> 0\\
\end{CD}\]
where the middle and right morphisms are quasi-isomorphisms. It follows by the 5-lemma that the left morphism is a quasi-isomorphism as well.  Thus, it suffices to show that the sheaves  are locally free. But for $p<n$ all the sheaves in (\ref{les1}) are endowed with Gauss-Manin connections which makes them locally free. Indeed, for the sheaves $\mathbb{R}^{p}f_*\Omega^{\bullet}_{X/S}$ and $\mathbb{R}^{p-1}f'_*\Omega^{\bullet}_{X'/S}$ this was proved by Brieskorn, whereas for $\mathbb{R}^{p}f_*\Omega^{\bullet}_{X/S}(X')$ it will be shown in the next section. For $p=n$ it follows from Milnor's (or Arnol'd's) theorem that there is a short exact sequence of coherent sheaves:
\[0\rightarrow \mathbb{R}^{n-1}f'_*\Omega^{\bullet}_{X'/S}\rightarrow \mathbb{R}^{n}f_*\Omega^{\bullet}_{X/S}(X')\rightarrow \mathbb{R}^{n}f_*\Omega^{\bullet}_{X/S}\rightarrow 0\]
By the Sebastiani theorem \cite{S} the sheaves on the left and on the right are locally free and it follows that the middle one is also locally free. 

\noindent (ii.)  This property is also classical and it guarantees that the de Rham cohomology sheaves are indeed coherent extensions of the sheaves of sections of the corresponding cohomological bundles at the origin. Briefly, one uses the relative Poincar\'e lemma according to which over the smooth points $S^*$, the short exact sequence:
\[0\rightarrow f^{-1}\mathcal{O}_{S^*}|_{X^*\setminus X'^*}\rightarrow f^{-1}\mathcal{O}_{S^*}\rightarrow f^{-1}\mathcal{O}_{S^*}|_{X'^*}\rightarrow 0,\]
where the left and right terms are the extension by zero of the restriction of the sheaf $f^{-1}\mathcal{O}_{S^*}$ on $X^*\setminus X'^*$ and $X'^*$ respectively, is a resolution of the short exact sequence (\ref{ses1}), i.e. there is a commutative diagram:
\[\begin{CD}
\label{cd5}
@. 0   @. 0 @. 0 \\
@. @VVV           @VVV               @VVV               \\
0 @>>>f^{-1}\mathcal{O}_{S^*}|_{X^*\setminus X'^*} @>>> f^{-1}\mathcal{O}_{S^*} @>>> f^{-1}\mathcal{O}_{S^*}|_{X'} @>>> 0\\
@. @VVV           @VVV               @VVV               \\
0 @>>> \Omega^{\bullet}_{X^*/S^*}(X'^*) @>>>  \Omega^{\bullet}_{X^*/S^*} @>>>  i_*\Omega^{\bullet}_{X'^*/S^*} @>>> 0\\
\end{CD}\]
From this, one obtains the required isomorphisms (c.f. \cite{B}, \cite{Lo}):
\[\mathbb{R}^pf_*\Omega^{\bullet}_{X^*/S^*} \cong R^pf_*f^{-1}\mathcal{O}_{S^*}\cong R^pf_*\mathbb{C}_{X^*}\otimes_{\mathbb{C}_{S^*}}\mathcal{O}_{S^*},\]
\[\mathbb{R}^pf_*\Omega^{\bullet}_{X'^*/S^*} \cong R^pf_*(f^{-1}\mathcal{O}_{S^*}|_{X'^*})\cong R^pf_*\mathbb{C}_{X'^*}\otimes_{\mathbb{C}_{S^*}}\mathcal{O}_{S^*},\]
and finally:
\[\mathbb{R}^pf_*\Omega^{\bullet}_{X^*/S^*}(X'^*) \cong R^pf_*(f^{-1}\mathcal{O}_{S^*}|_{X^*\setminus X'^*})\cong R^pf_*\mathbb{C}_{X^*\setminus X'^*}\otimes_{\mathbb{C}_{S^*}}\mathcal{O}_{S^*}.\]

\end{proof}

In the theorem above, property (iii) is of great significance in the sense that the long exact sequence (\ref{les2}) is an invariant of the boundary singularity germ $(f,H)$, i.e. it does not depend on all other choices (e.g. the standard representatives). For convenience in the following let us change notation for the relative de Rham complexes associated to the the germ $(f,H)$:
\[\Omega^{\bullet}_{X/S,0}:=\Omega^{\bullet}_f=\frac{\Omega^{\bullet}}{df\wedge \Omega^{\bullet-1}}, \hspace{0.3cm} \Omega^{\bullet}_{X/S,0}(X',0):=\Omega^{\bullet}_{f}(H)=\frac{\Omega^{\bullet}(H)}{df\wedge \Omega^{\bullet-1}(H)},\]
\[i_*\Omega^{\bullet}_{X'/S,0}:=i_*\Omega^{\bullet}_{f|_H}= \frac{i_*\Omega^{\bullet}_H}{i_*(df'\wedge \Omega^{\bullet-1}_H)},\]
where $\Omega^{\bullet}:=\Omega^{\bullet}_{X,0}$ is the complex of germs of holomorphic forms at the origin of $\mathbb{C}^{n+1}$, $\Omega^{\bullet}(H)=x\Omega^{\bullet}+dx\wedge \Omega^{\bullet-1}\subset \Omega^{\bullet}$ is the subcomplex of forms vanishing on $H$ and 
\[i_*\Omega^{\bullet}_H\cong \frac{\Omega^{\bullet}}{\Omega^{\bullet}(H)}=\frac{\Omega^{\bullet}}{x\Omega^{\bullet}+dx\wedge \Omega^{\bullet-1}}\] 
is the quotient complex (the extension by zero of the complex of sheaves of germs of holomorphic forms defined on $H=\mathbb{C}^n\subset \mathbb{C}^{n+1}$) The stack at the origin of the short exact sequence (\ref{ses1}) is written now:
\[0\rightarrow \Omega^{\bullet}_{f}(H)\rightarrow \Omega^{\bullet}_f\rightarrow i_*\Omega^{\bullet}_{f|_H}\rightarrow 0,\]
whereas the induced long exact cohomology sequence (\ref{les2}) is written:
\begin{equation}
\label{les3}
...\rightarrow H^{p-1}(\Omega^{\bullet}_{f|_H})\stackrel{\delta}{\rightarrow}H^p(\Omega^{\bullet}_{f}(H))\rightarrow H^p(\Omega^{\bullet}_f)\rightarrow H^{p}(\Omega^{\bullet}_{f|_H})\rightarrow ...
\end{equation}
and it is a long exact sequence of free $\mathbb{C}\{f\}$-modules of finite type. In particular, the long exact sequence (\ref{les3}) above reduces to the short exact sequence:
\begin{equation}
\label{ses2}
0\rightarrow H^{n-1}(\Omega^{\bullet}_{f|_H})\stackrel{\delta}{\rightarrow}H^n(\Omega^{\bullet}_{f}(H))\rightarrow H^n(\Omega^{\bullet}_f)\rightarrow 0.
\end{equation}
The connecting morphism $\delta$ is defined as follows: let $\bar{\alpha} \in \Omega^{n-1}_{f}$ represent a class $\alpha \in  H^{n-1}(\Omega^{\bullet}_{f|_H})=H^{n-1}(\frac{\Omega^{\bullet}_f}{\Omega^{\bullet}_{f}(H)})$. Then $d\bar{\alpha} \in \Omega^{n}_{f}(H)$ is closed and defines a class $d\bar{\alpha} \in H^n(\Omega^{\bullet}_{f}(H))$. By definition $\delta \alpha=d\bar{\alpha}$. Obviously this map is $\mathbb{C}\{f\}$-linear and it is independent of the representatives, but depends only on the class $\alpha$.

As a corollary we obtain:
\begin{cor}
\[H^p(\Omega^{\bullet}_{f|_H})\cong  \left\{\begin{array}{cl}
								 \mathbb{C}\{f\}, & p=0 \\
								 0, & 0< p<n-1, \\
								 \mathbb{C}\{f\}^{\mu_{f|_H}}, & p=n-1,
								 \end{array} \right. \hspace{0.2cm} H^p(\Omega^{\bullet}_{f})\cong  \left\{\begin{array}{cl}
								 \mathbb{C}\{f\}, & p=0 \\
								 0, & 0< p<n, \\
								 \mathbb{C}\{f\}^{\mu_{f}}, & p=n,
								 \end{array} \right.\]
\[H^p(\Omega^{\bullet}_{f}(H))\cong  \left\{\begin{array}{cl}
								 0, & 0\leq p<n, \\
								 \mathbb{C}\{f\}^{\mu_{f,H}}, & p=n,
								 \end{array} \right.\]
where $\mu_{f,H}=\mu_{f|_H}+\mu_{f}$ is the Milnor number of the boundary singularity $(f,H)$.
\end{cor}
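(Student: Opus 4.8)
The plan is to obtain the corollary by running the long exact sequence (\ref{les3}), once the two ``ordinary'' terms $H^{p}(\Omega^{\bullet}_{f})$ and $H^{p}(\Omega^{\bullet}_{f|_{H}})$ have been identified with classical Brieskorn modules.

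First I would recall the ordinary case. Since $f:(\mathbb{C}^{n+1},0)\to(\mathbb{C},0)$ has at most an isolated critical point, the partials $\partial f/\partial x,\partial f/\partial y_{1},\dots,\partial f/\partial y_{n}$ form a regular sequence in $\mathcal{O}_{n+1}$; de Rham's lemma then makes the Koszul complex $(\Omega^{\bullet},df\wedge)$ exact in degrees $\le n$, and the usual argument --- constancy along the fibers for $H^{0}$, and Sebastiani's freeness theorem \cite{S} together with the computation of the generic rank $\dim_{\mathbb{C}}H^{n}(X_{t};\mathbb{C})=\mu_{f}$ for $H^{n}$ --- gives the stated values of $H^{p}(\Omega^{\bullet}_{f})$. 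Applying this to the isolated singularity $f|_{H}:(\mathbb{C}^{n},0)\to(\mathbb{C},0)$, whose Milnor fiber $X'_{t}$ has dimension $n-1$, and using the identification $H^{p}(i_{*}\Omega^{\bullet}_{f|_{H}})\cong H^{p}(\Omega^{\bullet}_{f|_{H}})$ coming from (\ref{gro}), gives $H^{p}(\Omega^{\bullet}_{f|_{H}})$, concentrated in degrees $0$ and $n-1$ (the formula as stated presupposes $n\ge 2$, so that $X'_{t}$ is connected).

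Next I would feed these into (\ref{les3}). The one point requiring a direct check is that the restriction map $H^{0}(\Omega^{\bullet}_{f})\to H^{0}(\Omega^{\bullet}_{f|_{H}})$ is an isomorphism: a class in $H^{0}(\Omega^{\bullet}_{f})$ is represented by a germ $\varphi(f)$ with $\varphi\in\mathbb{C}\{t\}$, which restricts to $\varphi(f|_{H})$, so the map is the identity of $\mathbb{C}\{f\}$. In the remaining degrees the restriction maps $H^{p}(\Omega^{\bullet}_{f})\to H^{p}(\Omega^{\bullet}_{f|_{H}})$ are isomorphisms for $p<n-1$ and are zero for $p\ge n-1$ (source or target vanishing). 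Hence the connecting morphism $\delta:H^{p-1}(\Omega^{\bullet}_{f|_{H}})\to H^{p}(\Omega^{\bullet}_{f}(H))$ in (\ref{les3}) vanishes for every $p\ne n$, so (\ref{les3}) yields $H^{p}(\Omega^{\bullet}_{f}(H))\cong\ker\bigl(H^{p}(\Omega^{\bullet}_{f})\to H^{p}(\Omega^{\bullet}_{f|_{H}})\bigr)$ for $p\ne n$; this is $0$ for all $0\le p<n$ (the kernel of an isomorphism for $p=0$, a subobject of $0$ for $0<p<n$). For $p=n$ the same analysis collapses (\ref{les3}) to the short exact sequence (\ref{ses2}),
\[
0\longrightarrow H^{n-1}(\Omega^{\bullet}_{f|_{H}})\stackrel{\delta}{\longrightarrow}H^{n}(\Omega^{\bullet}_{f}(H))\longrightarrow H^{n}(\Omega^{\bullet}_{f})\longrightarrow 0 ,
\]
that is $0\to\mathbb{C}\{f\}^{\mu_{f|_{H}}}\to H^{n}(\Omega^{\bullet}_{f}(H))\to\mathbb{C}\{f\}^{\mu_{f}}\to 0$; since $\mathbb{C}\{f\}^{\mu_{f}}$ is free, hence projective over $\mathbb{C}\{f\}$, the sequence splits and $H^{n}(\Omega^{\bullet}_{f}(H))\cong\mathbb{C}\{f\}^{\mu_{f|_{H}}+\mu_{f}}=\mathbb{C}\{f\}^{\mu_{f,H}}$.

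There is no serious obstacle here: all the work is in this bookkeeping and in the two classical inputs above. If one prefers to prove the vanishing $H^{p}(\Omega^{\bullet}_{f}(H))=0$ for $p<n$ without the detour through the ordinary case, one can observe that for an isolated boundary singularity the germs $x\,\partial f/\partial x,\partial f/\partial y_{1},\dots,\partial f/\partial y_{n}$ cut out the origin in the Cohen--Macaulay ring $\mathcal{O}_{n+1}$ and hence form a regular sequence; a relative de Rham lemma for the subcomplex $\Omega^{\bullet}(H)$ of forms vanishing on $H$ then gives the vanishing directly, while freeness of $H^{n}(\Omega^{\bullet}_{f}(H))$ follows, as in part (i) of Theorem \ref{t1}, from Sebastiani's theorem applied to the two ends of (\ref{ses2}).
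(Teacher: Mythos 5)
Your argument is correct and follows essentially the paper's own route: the corollary is obtained by feeding the classical Brieskorn--Deligne--Sebastiani computations for $f$ and $f|_{H}$ into the long exact sequence (\ref{les3}) and splitting the resulting short exact sequence (\ref{ses2}), which is precisely the alternative argument sketched in the proof of Theorem \ref{t1}(i). Your caveat that the formula for $H^{p}(\Omega^{\bullet}_{f|_{H}})$ presupposes $n\ge 2$ (so that the Milnor fiber of $f|_{H}$ is connected) is a fair point that the paper leaves unstated.
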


\subsubsection{The Relative Gauss-Manin Connection and Relative Brieskorn Modules}

Here we will define first the analytic relative Gauss-Manin connection $D_{f,H}$ on the de Rham cohomology sheaves $\mathcal{H}^p_{dR}(X,X'/S)$ and we will show that it coincides with the topological one defined on the cohomology sheaves $\mathcal{H}^p(X^*,X'^*/S^*)$. This will imply also that the de Rham cohomology sheaves are indeed locally free and will finish the proof of Theorem \ref{t1}, (iv). To start let us make explicit the isomorphism:
\begin{equation}
\label{dr0}
\mathcal{H}^p_{dR}(X^*,X'^*/S^*)\cong \mathcal{H}^p(X^*,X'^*/S^*),
\end{equation}
which is a simple variant of the relative de Rham theorem, for holomorphic forms vanishing on the boundary. Let $\gamma(t)\in \cup_{t\in S^*} H_p(X_t,X'_t;\mathbb{C})$ be a locally constant (horizontal) section of the relative homology bundle, i.e. a section of the local system $(R^pf_*\mathbb{C}_{X^*\setminus X'^*})^*$, dual to the local system $R^pf_*\mathbb{C}_{X^*\setminus X'^*}=\ker D_{f,H}$.  Let $\omega \in \mathcal{H}^p_{dR}(X^*,X'^*/S^*)$ be a relative cohomology class represented by a holomorphic form $\omega \in \Omega^{p}_{X^*/S^*}(X')$. Then, the integral:
\[I(t)=\int_{\gamma(t)}\omega\]
is well defined (because $\omega$ vanishes on the boundary $X'$), it is nondegenerate (it takes zero values on relatively exact forms and relative boundaries) and it is also a holomorphic (multivalued) function of $t \in S^*$. The verification of the holomorphicity comes from a relative version of the Leray residue formula:
\begin{equation}
\label{lrf}
\int_{\gamma(t)}\omega=\frac{1}{2\pi i}\int_{\sigma \gamma(t)}\frac{df\wedge \omega}{f-t},
\end{equation}
where the relative Leray boundary operator 
\[\sigma: H_p(X_t,X'_t;\mathbb{C})\rightarrow H_{p+1}(X\setminus X_t, X'\setminus X'_t; \mathbb{C})\] 
is defined as follows: choose a tubular neighborhood $N$ of the fiber $X_t$ whose intersection with the boundary $X'$ gives a tubular neighborhood $N'$ of the subfiber $X'_t$ (such a choice is always possible by the transversality of $X_t$ with $X'$). The image of a relative cycle $\gamma(t)$ under $\sigma$ is then the relative cycle obtained by the preimage of $\gamma(t)$ under the natural projection (fibration by circles $S^1$) of the boundary of the tubular neighborhood $\partial N$ over $X_t$. In particular, the relative Leray boundary operator is such that it makes the following diagram of long exact homology sequences commutative:
\[\begin{CD}
\label{cd6}
 \vdots   @. \vdots  \\
@VVV           @VVV               \\
H_p(X_t;\mathbb{C}) @>>>H_{p+1}(X\setminus X_t;\mathbb{C}) \\
 @VVV           @VVV                  \\
H_p(X_t,X'_t;\mathbb{C}) @> \sigma >> H_{p+1}(X\setminus X_t, X'\setminus X'_t;\mathbb{C})\\
 @VVV           @VVV               \\
H_{p-1}(X'_t;\mathbb{C}) @>>> H_{p}(X'\setminus X'_t;\mathbb{C})\\
 @VVV           @VVV               \\  
\vdots   @. \vdots  \\
\end{CD}\]
where the upper and lower arrows are the ordinary Leray boundary operators. The proof of the formula (\ref{lrf}) is then the same as in the ordinary case. From this it follows that indeed the function $I(t)$ is holomorphic in $t$, from which we immediately obtain the isomorphism (\ref{dr0}):
\[\mathcal{H}^p_{dR}(X^*,X'^*/S)\cong (\mathcal{H}_p(X^*,X'^*/S^*))^*\cong \mathcal{H}^p(X^*,X'^*/S^*).\]
The analytic Gauss-Manin connection on the relative de Rham cohomology sheaves $\mathcal{H}^p_{dR}(X^*,X'^*/S^*)$ can now be defined as follows: calculate first the formula of the derivative of $I(t)$:
\[I'(t)=\frac{d}{dt}\int_{\gamma(t)}\omega=\frac{1}{2\pi i}\int_{\sigma \gamma(t)}\frac{df\wedge \omega}{(f-t)^2}=\frac{1}{2\pi i}\int_{\sigma \gamma(t)}\frac{d\omega}{f-t}=\]
\[=\frac{1}{2\pi i}\int_{\sigma \gamma(t)}\frac{df\wedge \eta}{f-t}=\int_{\gamma(t)}\eta,\]
where $\eta \in \Omega^{p}_{X^*/S^*}(X')$ is the Gelfand-Leray form of $d\omega$:
\[\eta=\frac{d\omega}{df},\]
defined by the condition $d\omega=df\wedge \eta$ (because $\omega$ is relatively closed). Notice now that the condition $0=d(d\omega)=df\wedge d\eta$ implies the existence of a $p$-form vanishing on the boundary $\alpha \in \Omega^{p}_{X}(X')$, such that $d\eta=df\wedge \alpha$ (this can be verified for example by taking local coordinates). Thus, we may define a map:
\[D_{f,H}:\mathcal{H}^p_{dR}(X^*,X'^*/S^*)\rightarrow \mathcal{H}^p_{dR}(X^*,X'^*/S^*),\]
by the rule:
\[D_{f,H}\omega=\frac{d\omega}{df}=\eta,\]
which, as is easily verified, it is $\mathbb{C}$-linear and satisfies the Leibniz rule over $\mathcal{O}_{S^*}$, i.e. it defines a connection on $\mathcal{H}^p_{dR}(X^*,X'^*/S^*)$. Moreover, by the formula of the derivative $I'(t)$ above, the connection $D_{f,H}$ coincides with the topological Gauss-Manin connection on $\mathcal{H}^p(X^*,X'^*/S^*)$. We will call it the relative (analytic) Gauss-Manin connection.

Now we will show that for all $p<n$, the relative Gauss-Manin connection $D_{f,H}$ can be extended at the origin $0 \in S$, i.e. to a map:
\[D_{f,H}:H^p(\Omega^{\bullet}_{f}(H))\rightarrow H^p(\Omega^{\bullet}_{f}(H))\]
defined by the same rule:
\[D_{f,H}\omega=\frac{d\omega}{df}=\eta.\]
To do this, it suffices to verify that the germ of the $p$-form $\eta \in \Omega^p_{f}(H)$ is indeed relatively closed. This follows from the lemma below, which is a relative analog of the de Rham division lemma \cite{Der}: 
\begin{lem}
\label{rdrd}
For all $p\leq n$ and any relative form $\omega \in \Omega^{p}(H)$ such that $df\wedge \omega=0$, there exists a $(p-1)$-form $\alpha \in \Omega^{p-1}(H)$ such that $\omega=df\wedge \alpha$.
\end{lem}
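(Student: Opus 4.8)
The plan is to reduce the relative de Rham division lemma to the ordinary de Rham division lemma of \cite{Der} by exploiting the explicit description of the subcomplex $\Omega^{\bullet}(H) = x\Omega^{\bullet} + dx\wedge \Omega^{\bullet-1}$. First I would write an arbitrary germ $\omega \in \Omega^p(H)$ in the form $\omega = x\,\omega_0 + dx\wedge \beta$ with $\omega_0 \in \Omega^p$ and $\beta \in \Omega^{p-1}$; since $dx\wedge \omega_0$ and $dx\wedge\beta$ have the same $dx$-component, one may arrange (absorbing the $dx$-part of $\omega_0$ into $\beta$) that $\omega_0$ contains no $dx$, i.e. $\omega_0 \in \Omega^p_{\mathbb{C}^n}\{x\}$ in the obvious sense, and likewise take $\beta$ with no $dx$. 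I would then decompose $df = \tfrac{\partial f}{\partial x}dx + d_y f$, where $d_y f = \sum_i \tfrac{\partial f}{\partial y_i}dy_i$ is the part not involving $dx$, and expand the hypothesis $df\wedge\omega = 0$. Separating the terms with a $dx$ factor from those without gives two equations: the $dx$-free part yields $d_y f \wedge (x\omega_0) = 0$, i.e. $d_y f\wedge \omega_0 = 0$ after dividing by $x$ (legitimate since $\Omega^{\bullet}$ has no $x$-torsion), and the $dx$-part yields $\tfrac{\partial f}{\partial x}\,x\,\omega_0 = d_y f \wedge \beta$ (up to sign).

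Next I would apply the ordinary de Rham division lemma on $\mathbb{C}^{n+1}$ — or rather on $\mathbb{C}^n$ with $x$ as a parameter — to the relation $d_y f \wedge \omega_0 = 0$. Here one must be slightly careful: $d_y f$ is the differential of $f$ viewed as a function on the $y$-slices, and the singularity hypothesis guarantees that either $f$ or $f|_H$ has an isolated critical point, which is exactly what makes $df$ (resp.\ $d_yf|_{x=0}$) a non-zero-divisor so that de Rham division applies. From $d_y f\wedge \omega_0 = 0$ we obtain $\omega_0 = d_y f \wedge \gamma$ for some $\gamma \in \Omega^{p-1}$ (again with no $dx$, after the same normalization). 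Substituting back into the $dx$-equation $\tfrac{\partial f}{\partial x}\, x\, d_y f\wedge \gamma = d_y f\wedge \beta$, i.e. $d_y f\wedge\bigl(\beta - x\tfrac{\partial f}{\partial x}\gamma\bigr) = 0$, and applying de Rham division once more yields $\beta - x\tfrac{\partial f}{\partial x}\gamma = d_y f\wedge \lambda$ for some $\lambda$. Then one assembles the candidate primitive
\[
\alpha = x\,\gamma + dx\wedge \lambda \;\in\; \Omega^{p-1}(H),
\]
which lies in $\Omega^{p-1}(H)$ by construction, and a direct computation of $df\wedge\alpha = \bigl(\tfrac{\partial f}{\partial x}dx + d_yf\bigr)\wedge(x\gamma + dx\wedge\lambda)$ should recover $\omega = x\omega_0 + dx\wedge\beta$ after using the two substitution relations; the $dx$-free terms give $x\, d_yf\wedge\gamma = x\omega_0$ and the $dx$-terms give $dx\wedge(\tfrac{\partial f}{\partial x}x\gamma + d_yf\wedge\lambda) = dx\wedge\beta$, as desired.

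The main obstacle I anticipate is the bookkeeping around the $dx$-component and making sure the normalizations (stripping $dx$ from $\omega_0$, $\beta$, $\gamma$, $\lambda$) are mutually consistent and that the primitive $\alpha$ genuinely lands in $\Omega^{p-1}(H)$ rather than merely in $\Omega^{p-1}$; in particular one must check that the cancellation of $x$ in the step ``$x\,d_yf\wedge\omega_0 = 0 \Rightarrow d_yf\wedge\omega_0 = 0$'' is valid, which it is because $\Omega^{\bullet}_{\mathbb{C}^{n+1},0}$ is a free module over $\mathcal{O}_{n+1}$ and $x$ is a non-zero-divisor there. A secondary subtlety is the boundary case $p = n$ and the degenerate situation where $f$ itself is smooth but $f|_H$ is singular (or vice versa): in either case de Rham division still applies on the appropriate factor, so the argument goes through uniformly, but this should be remarked on explicitly. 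Alternatively, and perhaps more cleanly, one can invoke the description of $\Omega^{\bullet}(H)$ as the kernel of restriction to $H$ together with the short exact sequence $0\to \Omega^{\bullet}(H)\to \Omega^{\bullet}\to i_*\Omega^{\bullet}_H\to 0$, apply ordinary de Rham division on $\Omega^{\bullet}$ and on $\Omega^{\bullet}_H$ simultaneously, and then chase the resulting primitives through the sequence using that the restriction map commutes with $df\wedge(-)$; this diagram-theoretic route avoids coordinates but requires the same torsion-freeness input.
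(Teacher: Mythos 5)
Your main (coordinate) argument has a genuine gap exactly at $p=n$, the case you dismiss at the end as going through ``uniformly.'' The first division you invoke, $d_yf\wedge\omega_0=0\Rightarrow\omega_0=d_yf\wedge\gamma$ for $dx$-free forms, is the statement that the Koszul complex of the length-$n$ sequence $(\partial f/\partial y_1,\dots,\partial f/\partial y_n)$ over the $(n+1)$-dimensional ring $\mathcal{O}_{n+1}$ is exact in degree $p$. That sequence is regular (its zero locus, the polar curve, is one-dimensional when $f|_H$ has an isolated singularity), so exactness holds for $p<n$; but in the top degree $p=n$ the cohomology is $\mathcal{O}_{n+1}/(\partial f/\partial y_1,\dots,\partial f/\partial y_n)\neq 0$, and indeed the hypothesis $d_yf\wedge\omega_0=0$ is vacuous there ($\omega_0$ is already of top degree in the $dy_i$). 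So for $p=n$ nothing in your first equation forces $\omega_0=g\,dy_1\wedge\dots\wedge dy_n$ to be divisible by $d_yf$. The lemma is still true at $p=n$, but to rescue your route you must extract $g\in(\partial f/\partial y_1,\dots,\partial f/\partial y_n)$ from the $dx$-component equation $x\,\tfrac{\partial f}{\partial x}\,\omega_0=d_yf\wedge\beta$, using that $x\,\tfrac{\partial f}{\partial x}$ is a non-zero-divisor on the one-dimensional Cohen--Macaulay polar curve $V(\partial f/\partial y_1,\dots,\partial f/\partial y_n)$ (it vanishes there only at the origin, precisely because \emph{both} $f$ and $f|_H$ have isolated critical points). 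This extra commutative-algebra step is essential and is missing; note also that for it, and for the regularity of the sequence itself, the ``either/or'' form of the isolatedness hypothesis is not enough.

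The ``cleaner alternative'' you sketch in one sentence at the end is in fact the paper's proof: one takes the short exact sequence of Koszul complexes $0\to(\Omega^{\bullet}(H),df\wedge)\to(\Omega^{\bullet},df\wedge)\to(i_*\Omega^{\bullet}_H,df\wedge)\to 0$ and reads off $H^p(\Omega^{\bullet}(H),df\wedge)=0$ for $p\leq n$ from the long exact sequence, since $H^p(\Omega^{\bullet},df\wedge)=0$ for $p\leq n$ ($f$ isolated on $\mathbb{C}^{n+1}$) and $H^{p-1}(i_*\Omega^{\bullet}_H,df\wedge)\cong H^{p-1}(\Omega^{\bullet}_H,df'\wedge)=0$ for $p-1\leq n-1$ ($f|_H$ isolated on $\mathbb{C}^n$). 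This sidesteps the polar-curve issue entirely: the quotient complex lives over $\mathcal{O}_n=\mathcal{O}_{n+1}/(x)$, where $(\partial_{y}(f|_H))$ is a \emph{maximal} regular sequence, and the degree shift in the connecting homomorphism is exactly what covers $p=n$. As written, your primary argument does not establish the case $p=n$; either supply the non-zero-divisor argument above or develop the long-exact-sequence route in full.
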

\begin{proof}
It follows from the fact that the de Rham division lemma holds for both $f$ and $f|_H$ because their singularities are isolated. Briefly, consider the Koszul complexes $K^{\bullet}_{f}=(\Omega^{\bullet}, df\wedge)$, $K^{\bullet}_{f}(H)=(\Omega^{\bullet}(H), df\wedge)$ and $i_*K^{\bullet}_{f|_H}=(i_*\Omega^{\bullet}_H, df\wedge)$ and the corresponding short exact sequence:
\[0\rightarrow K^{\bullet}_{f}(H)\rightarrow K^{\bullet}_{f}\rightarrow i_*K^{\bullet}_{f|_H}\rightarrow 0.\]
The statement of the lemma is then equivalent to the cohomologies $H^p(K^{\bullet}_{f}(H))$ being all zero for $p\leq n$. This follows in turn by the long exact cohomology sequence and the fact that $H^p(K^{\bullet}_f)$ and $H^{p-1}(i_*K^{\bullet}_{f|_H})$ are both zero for all $p\leq n$. Indeed, the first statement is equivalent to the ordinary de Rham division lemma for $f$, while the second statement follows from the natural isomorphism\footnote{which is the isomorphism (\ref{gro}) with the direct image functor $f_*$ replaced with the global sections functor $\Gamma$ and the complex $\mathcal{F}^{\bullet}$ with the Koszul complex $K^{\bullet}_{f|_H}$.}:
\[H^{p-1}(i_*K^{\bullet}_{f|_H})\cong H^{p-1}(K^{\bullet}_{f|_H})\]
and the de Rham division lemma for the restriction $f|_H$
\end{proof}
\begin{rem}
\label{r1}
It follows from the argument above that the nonzero cohomologies of the Koszul complexes are in degree $n+1$:
\[H^{n+1}(K^{\bullet}_f)=\Omega^{n+1}_{f}, \hspace{0.3cm} H^n(K^{\bullet}_{f|_H})=\Omega^{n}_{f|_H},\]
\[H^{n+1}(K^{\bullet}_{f}(H))=\Omega^{n+1}_{f}(H)\]
and thus, there is a short exact sequence:
\begin{equation}
\label{kses}
0\rightarrow \Omega^{n}_{f|_H}\stackrel{df\wedge}{\rightarrow}\Omega^{n+1}_{f}(H)\rightarrow \Omega^{n+1}_f\rightarrow 0.
\end{equation}
But after a choice of coordinates $(x,y_1,...y_n)$ for which $H=\{x=0\}$ and division with the form $\omega=dx\wedge dy_1\wedge ... \wedge dy_n$, the short exact sequence above reduces to a short exact sequence of the corresponding local algebras (c.f. \cite{Szp}):
\[0\rightarrow \mathcal{Q}_{f|_H}\rightarrow \mathcal{Q}_{f,H}\rightarrow \mathcal{Q}_f\rightarrow 0.\]
This gives also another proof of the formula for the Milnor number of a boundary singularity:
\[\mu_{f,H}=\mu_{f}+\mu_{f|_H}.\] 
\end{rem}
Thus, the map $D_{f,H}$ can be indeed extended at the origin and consequently it defines a connection in the usual sense for all $p<n$ as expected. Attempting now to extend the relative Gauss-Manin connection at the origin for $p=n$ we come to the obstruction that the form $d\eta=d(\frac{d\omega}{df})$ may not be relatively closed, being of maximal degree $n+1$. To study the Gauss-Manin connection in this case we may, following Brieskorn \cite{B}, define two extensions of the cohomology module $H^n(\Omega^{\bullet}_{f}(H))$ (the relative Brieskorn modules) as follows: denote by $H_{f,H}:=H^n(\Omega^{\bullet}_{f}(H))$ and consider the natural inclusion of this module in the cokernel of the differential $d:\Omega^{n-1}_{f}(H)\rightarrow \Omega^{n}_{f}(H)$:
\[H_{f,H}\subset H'_{f,H}:=\frac{\Omega^{n}_{f}(H)}{d\Omega^{n-1}_{f}(H)}\cong \frac{\Omega^n(H)}{df\wedge \Omega^{n-1}(H)+d\Omega^{n-1}(H)}.\]
Consider now multiplication by $df\wedge $ on $H'_{f,H}$. It defines an isomorphism:
\[H'_{f,H}\stackrel{df\wedge}{\xrightarrow{\sim}}\frac{df\wedge \Omega^{n}(H)}{df\wedge d\Omega^{n-1}(H)}\]
and we thus obtain another natural inclusion:
\[H'_{f,H}\stackrel{df\wedge}{\subset} H''_{f,H}:=\frac{\Omega^{n+1}}{df\wedge d\Omega^{n-1}(H)}.\]
We have thus a sequence of inclusions of $\mathbb{C}\{f\}$-modules:
\[H_{f,H}\subset H'_{f,H}\subset H''_{f,H},\]
whose cokernels are both isomorphic to the same $\mu_{f,H}$-dimensional $\mathbb{C}$-vector space:
\[\frac{H'_{f,H}}{H_{f,H}}\stackrel{d}{\xrightarrow{\sim}}\Omega^{n+1}_{f}(H), \hspace{0.3cm} \frac{H''_{f,H}}{H'_{f,H}}\cong \Omega^{n+1}_{f}(H).\]
Hence, we may view these modules as defining lattices in the same $\mu_{f,H}$-dimensional vector space over the field of quotients $\mathbb{C}(f)$ of $\mathbb{C}\{f\}$:
\[\mathcal{M}_{f,H}=H_{f,H}\otimes_{\mathbb{C}\{f\}}\mathbb{C}(f)= H'_{f,H}\otimes_{\mathbb{C}\{f\}}\mathbb{C}(f)= H''_{f,H}\otimes_{\mathbb{C}\{f\}}\mathbb{C}(f)\]
In analogy with the ordinary case we call the modules $H'_{f,H}$ and $H''_{f,H}$ the relative Brieskorn modules (or lattices) of the boundary singularity $(f,H)$. 

Now, using the relative Brieskorn modules we may extend the map $D_{f,H}$ to two maps (which we denote by the same symbol):
\[D_{f,H}:H_{f,H}\rightarrow H'_{f,H}, \hspace{0.3cm} D_{f,H}\alpha=\frac{d\alpha}{df}=\eta,\]
\[D_{f,H}:H'_{f,H}\rightarrow H''_{f,H}, \hspace{0.3cm} D_{f,H}\eta=D_{f,H}(df\wedge \eta)=d\eta,\]
which, as is easily verified, are $\mathbb{C}$-linear and satisfy the Leibniz rule over $\mathbb{C}\{f\}$ (they define ``connections'' on the corresponding pairs of modules in the sense of Malgrange \cite{Mal}). For these maps we have first the following important proposition:
\begin{prop}
\label{p}
The maps $D_{f,H}$ defined above induce isomorphisms of the underlying $\mathbb{C}$-vector spaces, i.e. there exists a commutative diagram:
\[\begin{CD}
\label{cd7}
H'_{f,H} @>D_{f,H}>\sim > H''_{f,H} @>>> \Omega^{n+1}_{f}(H)\\
 @A D_{f,H} A \wr A           @A D_{f,H} A \wr A               @|               \\
H_{f,H} @>D_{f,H}>\sim > H'_{f,H} @>>> \Omega^{n+1}_{f}(H)\\
\end{CD}\]
\end{prop}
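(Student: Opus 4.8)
The plan is to prove the two halves directly, the only genuine input being a Poincar\'e-type lemma for the subcomplex $\Omega^{\bullet}(H)\subset\Omega^{\bullet}$ of germs of forms vanishing on the boundary. First I would observe that $(\Omega^{\bullet}(H),d)$ is an \emph{exact} complex. This follows from the short exact sequence of de Rham complexes $0\to\Omega^{\bullet}(H)\to\Omega^{\bullet}\to i_{*}\Omega^{\bullet}_{H}\to 0$: by the holomorphic Poincar\'e lemma for germs on $\mathbb{C}^{n+1}$ and on $H\cong\mathbb{C}^{n}$, both $\Omega^{\bullet}$ and $i_{*}\Omega^{\bullet}_{H}$ have cohomology $\mathbb{C}$ in degree $0$ and $0$ in positive degrees, and the restriction map identifies the two copies of $\mathbb{C}$; the associated long exact cohomology sequence then forces $H^{p}(\Omega^{\bullet}(H))=0$ for all $p$. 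Two consequences are what I will use repeatedly: the differential $d:\Omega^{n}(H)\to\Omega^{n+1}(H)=\Omega^{n+1}$ is surjective (recall $\Omega^{n+1}(H)=x\Omega^{n+1}+dx\wedge\Omega^{n}=\Omega^{n+1}$), and its kernel is exactly $d\,\Omega^{n-1}(H)$.

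With this in hand, $D_{f,H}:H_{f,H}\to H'_{f,H}$ is an isomorphism. Given $[\eta]\in H'_{f,H}$ with $\eta\in\Omega^{n}(H)$, surjectivity of $d$ produces $\alpha\in\Omega^{n}(H)$ with $d\alpha=df\wedge\eta$; since $d\alpha\in df\wedge\Omega^{n}(H)$ the form $\alpha$ is relatively closed, so $[\alpha]\in H_{f,H}$ and $D_{f,H}[\alpha]=[\eta]$ by the definition of the map (whose well-definedness rests on Lemma~\ref{rdrd}), giving surjectivity; conversely if $D_{f,H}[\alpha]=0$ then $d\alpha=df\wedge\eta$ with $\eta\in df\wedge\Omega^{n-1}(H)+d\,\Omega^{n-1}(H)$, so $d\alpha=-d(df\wedge\beta)$ for some $\beta\in\Omega^{n-1}(H)$, whence $\alpha+df\wedge\beta$ is a closed form in $\Omega^{n}(H)$, therefore exact by the lemma, and so $\alpha\in d\,\Omega^{n-1}(H)+df\wedge\Omega^{n-1}(H)$, i.e. $[\alpha]=0$ in $H_{f,H}$. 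The map $D_{f,H}:H'_{f,H}\to H''_{f,H}$, sending the class of $\eta\in\Omega^{n}(H)$ to the class of $d\eta$ in $\Omega^{n+1}/df\wedge d\,\Omega^{n-1}(H)$, is handled by the mirror-image argument: surjectivity is again immediate from surjectivity of $d:\Omega^{n}(H)\to\Omega^{n+1}$, and if $d\eta\in df\wedge d\,\Omega^{n-1}(H)$, say $d\eta=df\wedge d\beta$, then $\eta+df\wedge\beta$ is closed in $\Omega^{n}(H)$, hence exact, so $[\eta]=0$ in $H'_{f,H}$ (well-definedness of this map again uses Lemma~\ref{rdrd}).

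It remains to note that both maps are $\mathbb{C}$-linear -- immediate from additivity of $d$ and of $df\wedge$, the failure of $\mathbb{C}\{f\}$-linearity being exactly the Leibniz term -- and to check the commutativity of the diagram. The left-hand square is simply the single composite $H_{f,H}\xrightarrow{D_{f,H}}H'_{f,H}\xrightarrow{D_{f,H}}H''_{f,H}$ read in two ways, so it commutes tautologically. The right-hand square reduces to the identity $\pi''\circ D_{f,H}=\pi'$ between the two natural surjections onto $\Omega^{n+1}_{f}(H)=\Omega^{n+1}/df\wedge\Omega^{n}(H)$ -- the projection $H'_{f,H}\to H'_{f,H}/H_{f,H}$ realised by $d$, and the quotient $H''_{f,H}=\Omega^{n+1}/df\wedge d\,\Omega^{n-1}(H)\to\Omega^{n+1}/df\wedge\Omega^{n}(H)$ -- both of which send the class of $\eta$ to the class of $d\eta$, so this too is immediate. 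I do not expect a serious obstacle here; the only delicate point is the bookkeeping of the three nested identifications $H_{f,H}\subset H'_{f,H}\subset H''_{f,H}$ together with $H'_{f,H}/H_{f,H}\cong H''_{f,H}/H'_{f,H}\cong\Omega^{n+1}_{f}(H)$, and making sure the sign $d(df\wedge\beta)=-df\wedge d\beta$ cancels consistently in the two injectivity steps. (The same computation with $\Omega^{\bullet}(H)$ replaced throughout by $\Omega^{\bullet}$ recovers Brieskorn's classical statement for the ordinary modules $H_{f}\subset H'_{f}\subset H''_{f}$; alternatively one could deduce the proposition from that ordinary case together with the short exact sequence (\ref{ses2}) and a five-lemma, but the direct route above avoids having to build the analogous short exact sequences for the primed and double-primed modules.)
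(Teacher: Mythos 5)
Your argument is correct, and for the map $D_{f,H}:H'_{f,H}\to H''_{f,H}$ it is essentially the paper's own proof: surjectivity from the Poincar\'e lemma for the complex $\Omega^{\bullet}(H)$ (whose exactness you, unlike the paper, actually justify via the long exact sequence attached to $0\to\Omega^{\bullet}(H)\to\Omega^{\bullet}\to i_{*}\Omega^{\bullet}_{H}\to 0$), and injectivity by absorbing the term $df\wedge d\beta=-d(df\wedge\beta)$ and applying exactness again. Where you go beyond the paper is the first square: the paper proves only the second isomorphism and defers the isomorphism $D_{f,H}:H_{f,H}\to H'_{f,H}$ to a cross-reference (to Proposition \ref{t4}, which is in fact the asymptotics theorem, so the reference is at best indirect), whereas you give the mirror-image argument directly, with the well-definedness correctly resting on the relative de Rham division Lemma \ref{rdrd} to control the ambiguity of the Gelfand--Leray form. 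Your bookkeeping of the zero class in $H_{f,H}$ as $d\Omega^{n-1}(H)+df\wedge\Omega^{n-1}(H)$ and the sign cancellation are both right, and the commutativity checks are indeed tautological. So: same method, but a more complete and self-contained execution than the paper's.
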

\begin{proof}
We will show that the map $D_{f,H}:H'_{f,H}\rightarrow H''_{f,H}$ is indeed an isomorphism (for the other map see Proposition \ref{t4}). It is obviously surjective since for any $\omega \in \Omega^{n+1}$ representing a class in $H''_{f,H}$ there exists a form $\eta \in \Omega^{n}_H$ such that $\omega=d\eta$ (by the Poincar\'e lemma for $\Omega^{\bullet}(H)$). To show that it is injective, let $D_{f,H}\eta=d\eta=0$. This means that for a representative $d\eta \in \Omega^{n+1}$ of the class $d\eta$ there exists a form $h\in \Omega^{n-1}(H)$ such that $d\eta=df\wedge dh$. Thus $\eta=df\wedge h+dg$ for some $g\in \Omega^{n-1}(H)$, i.e. the class of $\eta$ is indeed zero in $H'_{f,H}$. 
\end{proof}
Despite the fact that these maps do not define connections in the ordinary sense, it follows that they induce the same meromorphic connection $D_{f,H}$ on the localisation $\mathcal{M}_{f,H}$ of the relative Brieskorn modules:
\[D_{f,H}:\mathcal{M}_{f,H}\rightarrow \mathcal{M}_{f,H}\]
defined as follows: let $\omega \in \Omega^{n+1}$ be a representative of a class in $H''_{f,H}$. Since the boundary singularity $(f,H)$ is isolated there exists a natural number $k<\infty$  such that $f^k\omega=df\wedge \eta$, where $\eta \in \Omega^{n}(H)$. Then $D_{f,H}(f^k\omega)=D_{f,H}(df\wedge \eta)=d\eta$ and by the Leibniz rule we obtain in $\mathcal{M}_{f,H}$:
\[D_{f,H}\omega=\frac{d\eta}{f^k}-k\frac{\omega}{f}.\]
It is easy now to verify that the map thus defined is $\mathbb{C}$-linear and satisfies the Leibniz rule over $\mathbb{C}(f)$, i.e. it indeed defines a connection on $\mathcal{M}_{f,H}$, with a pole of degree at most $k$ at the origin. 

\begin{rem}
\label{r2}
In the next section we will show that the relative Gauss-Manin connection thus defined is regular, i.e. there exists a (meromorphic) change of coordinates such that $D_{f,H}$ has a pole of degree at most 1 at the origin. The residue $\text{Res}_0D_{f,H}$ of the connection is then the constant matrix $\Gamma$ in the representation:
\[y'=(\frac{\Gamma}{t}+\tilde{\Gamma}(t))y,\]
of the differential system of horizontal sections in this basis, where $\tilde{\Gamma}(t)$ is a holomorphic matrix. Since the characteristic polynomial of the relative Picard-Lefschetz monodromy $T_{f,H}$ is integral, it is constant under variations of $t$ and thus its roots $\lambda_j$ coincide with the numbers $e^{-2\pi i\alpha_j}$, where $\alpha_j$ are the eigenvalues of $\text{Res}_0D_{f,H}$. Moreover, one may show\footnote{following for example the same construction as in \cite{B}} that the connection $D_{f,H}$ is algebraically defined, i.e. that for any automorphism $\phi:\mathbb{C}\rightarrow \mathbb{C}$ the following relation holds:
\[D_{\phi f,H}=\phi \circ D_{f,H}.\]
It follows then from the solution of Hilbert's VII problem that the eigenvalues $\alpha_j$ of $\text{Res}_0D_{f,H}$ are rational numbers and thus, the eigenvalues of the relative monodromy operator $T_{f,H}$ are indeed roots of unity.
\end{rem}

\subsubsection{Asymptotics of Integrals along Vanishing Cycles: the Relative Sebastiani Theorem and Regularity of the Relative Gauss-Manin Connection}

We give here a direct extension of some results obtained by Malgrange in \cite{Mal}, concerning the asymptotics of integrals of holomorphic forms along relative vanishing cycles. First we will need the following estimate which we will use to prove the relative Sebastiani theorem as well as the regularity theorem for the relative Gauss-Manin connection:
\begin{prop}
\label{m}
For any relative $n$-form $\omega \in \Omega^n_{X/S}(X')$ and any section $\gamma(t) \in H_n(X_t,X'_t;\mathbb{C})$ in a sector containing the zero ray:
\[\lim_{t\rightarrow 0, \arg t=0}\int_{\gamma(t)}\omega=0.\]
\end{prop}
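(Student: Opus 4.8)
The plan is to reduce the statement to the classical estimates of Malgrange \cite{Mal} for the two ordinary singularities $f$ and $f|_H$, exploiting the decomposition of $H_n(X_t,X_t';\mathbb{C})$ into vanishing cycles and vanishing half-cycles, and then to control each resulting integral by a crude boundedness argument. First I would represent $\omega$ by a genuine holomorphic $n$-form on $X$ vanishing on $X'$, retaining the notation $\omega$: since $df$ restricts to zero on each fibre $X_t$, adding $df\wedge\beta$ with $\beta\in\Omega^{n-1}_X(X')$ does not change $\int_{\gamma(t)}\omega$; and since $\omega|_{X_t}$ is an $n$-form on the $n$-dimensional manifold $X_t$ (hence closed) while $\omega$ vanishes on $X'$, the integral depends only on the relative homology class $[\gamma(t)]\in H_n(X_t,X_t';\mathbb{C})$. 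By Arnol'd's theorem (\cite{A2}, \cite{Szp}), over a simply connected sector this class is a combination, with $t$-independent coefficients, of the $\mu_f$ vanishing cycles $\delta_1(t),\dots,\delta_{\mu_f}(t)\in H_n(X_t;\mathbb{C})$ of $f$ and the $\mu_{f|_H}$ vanishing half-cycles $\tilde\delta_1(t),\dots,\tilde\delta_{\mu_{f|_H}}(t)$. By $\mathbb{C}$-linearity it therefore suffices to establish the limit when $\gamma(t)$ is one of the $\delta_j(t)$ or one of the $\tilde\delta_k(t)$.

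Next I would fix a Hermitian metric on a neighbourhood of the closure of the standard representative, so that $\|\omega\|\le C<\infty$ there; then $|\int_c\omega|\le C\cdot\mathrm{vol}_n(c)$ for every $n$-chain $c$. It thus remains to show that the vanishing cycles and the vanishing half-cycles can be realised by chains whose $n$-dimensional volume tends to $0$ as $t\to 0$. This is the geometric substance of Malgrange's estimate: by the conical structure of the singular fibre $X_0$ (respectively of the pair $(X_0,X_0')$) together with local triviality of the Milnor fibration away from the origin, for every $\rho>0$ the inclusion of $X_t\cap B_\rho$ into $X_t$ (respectively its relative version for the pair) is a homotopy equivalence once $|t|$ is small, so the distinguished cycles may be represented by subanalytic chains supported in an arbitrarily small ball $B_{\rho(|t|)}$ with $\rho(|t|)\to 0$, of uniformly bounded subanalytic complexity; hence their volumes are $O(\rho(|t|)^n)$ and tend to $0$. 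For the ordinary vanishing cycles $\delta_j(t)\subset X_t$ this is exactly Malgrange's estimate applied to $f$; the half-cycles $\tilde\delta_k(t)$ are handled in the same way, realising them as subanalytic cones inside $X_t$ over the vanishing cycles $\bar\delta_k(t)\subset X_t'$ of $f|_H$ — which themselves shrink to the origin by Malgrange's estimate for $f|_H$ — the transversality of $X_t$ with $X'$ guaranteeing that the cone stays inside a ball shrinking to the origin.

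Combining the two steps, $|\int_{\gamma(t)}\omega|\le C\cdot\mathrm{vol}_n(\gamma(t))\to 0$ as $t$ tends to $0$ inside the sector, in particular along the zero ray $\arg t=0$. I expect the main obstacle to be precisely the geometric input above, namely making rigorous that the relative distinguished cycles ``vanish at the origin'' with controlled volume: once the cycles are chosen in a subanalytic family this is a Cauchy--Crofton and finiteness estimate, but it is the step that genuinely uses the structure theory behind the Milnor fibration of the pair rather than formal manipulations. An alternative route is the relative Leray residue formula (\ref{lrf}): writing $\int_{\gamma(t)}\omega=\frac{1}{2\pi i}\int_{\sigma\gamma(t)}\frac{df\wedge\omega}{f-t}$ and deforming $\sigma\gamma(t)$ to a tube lying over a fixed small circle $\{|f|=c\}$ gives at once the boundedness of $\int_{\gamma(t)}\omega$ as $t\to 0$, but upgrading this to the vanishing limit still requires the same localisation of $\sigma\gamma(t)$ near the origin as $t\to 0$.
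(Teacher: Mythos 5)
Your reduction to a basis of vanishing cycles and half-cycles, and the observation that the integral depends only on the relative homology class, are correct but are not where the difficulty lies. The entire content of the proposition is concentrated in your claim that the distinguished (half-)cycles can be represented by chains whose $n$-dimensional volume tends to $0$ as $t\to 0$, and this claim is asserted rather than proved. It is also misattributed: Malgrange does not prove a volume estimate on vanishing cycles, so ``Malgrange's estimate'' is not available in the form you invoke. The fact that $X_t\cap B_\rho\hookrightarrow X_t$ is a homology isomorphism for $|t|$ small only localizes the class in a small ball; it does not bound the volume of an $n$-real-dimensional representative sitting inside a ball of radius $\rho$ in the $2n$-real-dimensional fibre. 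To close this you would have to exhibit the representatives as a single subanalytic (o-minimal) family over the ray, of uniformly bounded complexity, so that Cauchy--Crofton yields $\mathrm{vol}_n=O(\rho(|t|)^n)$, and for the half-cycles additionally produce definable bounding chains in $X_t$ for the null-homologous cycles $\bar\delta_k(t)\subset X_t'$ with controlled volume (an isoperimetric statement). That is a genuine piece of work which your sketch, and your closing paragraph, leave open; the alternative route via the Leray residue formula indeed only gives boundedness, as you note.

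The paper (following Malgrange) sidesteps all of this by running Stokes one dimension up. Since $Y=f^{-1}([0,t_0])$ retracts to the contractible (pair of) singular fibre(s), a semianalytic representative $\Gamma$ of $\gamma(t_0)$ bounds a semianalytic relative $(n+1)$-chain $\Delta$, and the Stokes--Herrera theorem gives $I(t)=\int_{\Delta_t}d\omega$ with $\Delta_t=f^{-1}([0,t])\cap\Delta$. The shrinking object is now the $(n+1)$-chain $\Delta_t$, which decreases to the $n$-dimensional set $\Delta_0=\Delta\cap X_0$; dominated convergence gives $I(t)\to\int_{\Delta_0}d\omega=0$ because the $(n+1)$-form $d\omega$ restricts to zero on the smooth part of the $n$-dimensional $X_0$. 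No volume estimate on the $n$-cycles and no reduction to a distinguished basis is needed. Either adopt this filling-chain argument or supply the o-minimal family-of-cycles construction in full; as written, your central step is a gap.
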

\begin{proof}
The proof is the same as in \cite{Mal} with simple modifications: let $\omega \in \Omega^n_{X}(X')$ represent the class of $\omega$. Fix a real $t_0>0$ and let $Y=f^{-1}([0,t_0])\subset X$, $Y'=f^{-1}([0,t])\cap X'=f'^{-1}([0,t_0])\subset X'$. Let $\gamma(t_0)$ be a relative $n$-cycle on $X_{t_0}$ and let $\Gamma$ be a representative. By the fact that  the pair $(X_{t_0},X'_{t_0})$ is contractible, it follows that the pair $(Y,Y')$ is contractible as well. Since $Y$ is semianalytic and $Y'$ is a semianalytic subset, we may find semianalytic triangulation of $Y$ such that both $Y'$ and $X_{t_0}$ are subcomplexes of $Y$ and such that $X'_{t_0}=X_{t_0}\cap Y'$ is a subcomplex of both $Y'$ and $X_{t_0}$  (c.f. \cite{Loj}). Thus, there exists a relative $(n+1)$-chain $\Delta$ such that $\Gamma=\partial \Delta$ (here the boundary operator $\partial$ is the one induced on the relative chains). By an immediate extension of Stokes-Herrera theorem \cite{He} for the relative case, we have that the integrals 
\[I(t_0)=\int_{\gamma(t_0)}\omega=\int_{\Gamma}\omega=\int_{\Delta}d\omega\]
are well defined. Consider now a relative $(n+1)$-chain $\Delta_t=f^{-1}([0,t])\cap \Delta$, $t\in (0,t_0]$. Then $\Delta=\Delta_t+\Delta'$ where $\Delta'$ is a relative $(n+1)$-chain on $f^{-1}([t,t_0])$ and $\partial \Delta'=\Gamma-\Gamma_t$. It follows that $\Gamma_t$ is a relative cycle representing $\gamma(t)$ and
\[I(t_0)=\int_{\Delta}d\omega=\int_{\Delta_t}d\omega +\int_{\Delta'}d\omega=\int_{\Delta_t}d\omega+\int_{\Gamma}\omega -\int_{\Gamma_t}\omega =\int_{\Delta_t}d\omega +I(t_0)-I(t),\]
i.e. 
\[I(t)=\int_{\Gamma_t}\omega=\int_{\Delta_t}d\omega.\]
But 
\[\lim_{t\rightarrow 0}\int_{\Delta_t}d\omega =\int_{\Delta_0}d\omega\]
where $\Delta_0=X_0\cap \Delta$ is a relative $n$-chain on $X_0$. By the fact that the restriction of $d\omega$ on the smooth part of $X_0$ is zero, it follows that $\lim_{t\rightarrow 0}I(t)=0$ as was asserted.
\end{proof}

As an immediate corollary of this proposition we obtain the following relative analog of the Sebastiani theorem \cite{S}:
\begin{thm}
\label{t3}
The relative Brieskorn module $H''_{f,H}$ (and thus $H'_{f,H}$ and $H_{f,H}$) is a free module of rank $\mu_{f,H}$.
\end{thm}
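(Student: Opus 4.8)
The plan is to prove that $H''_{f,H}$ is a finitely generated torsion-free module over $\mathbb{C}\{f\}$. Since $\mathbb{C}\{f\}$ is a discrete valuation ring, the structure theorem then gives that $H''_{f,H}$ is free, of rank equal to $\dim_{\mathbb{C}(f)}\mathcal{M}_{f,H}=\mu_{f,H}$ (the last equality coming from the Corollary above, which identifies the generic rank). The assertions for $H'_{f,H}$ and $H_{f,H}$ follow immediately: being submodules of the free module $H''_{f,H}$ over the principal ideal domain $\mathbb{C}\{f\}$, they are free; and since $H_{f,H}\subset H'_{f,H}\subset H''_{f,H}$ have finite $\mathbb{C}$-codimension (the successive quotients are copies of $\Omega^{n+1}_{f}(H)$), their rank is again $\mu_{f,H}$.

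Finite generation of $H''_{f,H}$ is the routine part: from the chain $H_{f,H}\subset H'_{f,H}\subset H''_{f,H}$ with finite-dimensional successive subquotients, together with the fact that $H_{f,H}=H^n(\Omega^{\bullet}_{f}(H))$ is already finitely generated over $\mathbb{C}\{f\}$ by Theorem \ref{t1}, one concludes at once that $H''_{f,H}$ is a finitely generated $\mathbb{C}\{f\}$-module.

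The essential point is torsion-freeness. The torsion submodule $T\subset H''_{f,H}$ is a finite-dimensional $\mathbb{C}$-vector space supported at the origin, and it is precisely the kernel of the canonical localisation map $H''_{f,H}\to\mathcal{M}_{f,H}$. By the relative de Rham theorem (\ref{dr0}), $\mathcal{M}_{f,H}$ is realised as the space of multivalued sections over $S^*$ of the relative cohomology bundle, with the period pairing against $H_n(X_t,X'_t;\mathbb{C})$ nondegenerate; hence $[\omega]\in H''_{f,H}$ lies in $T$ exactly when all its Gelfand--Leray periods $t\mapsto\int_{\gamma(t)}\omega/df$ vanish identically. Given such an $[\omega]$, represent it by $\omega\in\Omega^{n+1}$; since $(f,H)$ is isolated there is $k$ and $\psi\in\Omega^n(X)$ vanishing on $X'$ with $f^k\omega=df\wedge\psi$, and then $t^k\!\int_{\gamma(t)}\psi\equiv 0$, so $\int_{\gamma(t)}\psi\equiv 0$ for every relative cycle. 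The relative de Rham theorem on the punctured family then makes $\psi$ relatively exact there, so $f^k\omega=df\wedge d\eta$ on $X\setminus X_0$ for a relative $(n-1)$-form $\eta$ vanishing on $X'\setminus X'_0$. The decisive move is to propagate this identity across the singular fibre: running the chain-integration argument of the proof of Proposition \ref{m} on the relative $(n+1)$-chains $\Delta_t$ whose boundaries represent the $\gamma(t)$, the vanishing of the periods together with the estimate $\int_{\gamma(t)}\psi\to 0$ forces a suitable choice of $\eta$ to have no pole along $X_0$, whence $f^k\omega=df\wedge d\eta$ holds for germs with $\eta\in\Omega^{n-1}(H)$; an injectivity argument of the type used in Proposition \ref{p} (together with Lemma \ref{rdrd}) then yields $[\omega]=0$ in $H''_{f,H}$. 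Thus $T=0$.

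I expect this last step --- upgrading relative exactness of $\omega/df$ from the punctured family to relative exactness across the singular fibre --- to be the main obstacle. On $X\setminus X_0$ alone one only recovers that $[\omega]$ is a torsion class, which is no new information; it is precisely the quantitative content of Proposition \ref{m} (no positive power of $t$ is needed to kill the periods, equivalently the period functions stay bounded as $t\to 0$) that removes the pole along $X_0$, and hence the torsion. Once this is established, freeness of $H''_{f,H}$ --- and with it of $H'_{f,H}$ and $H_{f,H}$, of rank $\mu_{f,H}$ --- follows as described above.
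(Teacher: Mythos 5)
Your reduction of the theorem to torsion-freeness, the finite-generation argument via the chain $H_{f,H}\subset H'_{f,H}\subset H''_{f,H}$ with finite-dimensional subquotients, and the identification of the rank with $\mu_{f,H}$ are all fine. The gap is at the step you yourself flag as decisive: passing from $f^k\omega=df\wedge d\eta$ on $X\setminus X_0$ to the same identity for germs with $\eta\in\Omega^{n-1}(H)$. Your proposed mechanism --- that Proposition \ref{m} ``removes the pole of $\eta$ along $X_0$'' --- does not work as stated. Proposition \ref{m} asserts $\int_{\gamma(t)}\psi\to 0$ for \emph{every} holomorphic relative form $\psi$ vanishing on the boundary; for your torsion class the periods of $\psi$ are already identically zero, so the limit statement is vacuous and gives no control whatsoever on the primitive $\eta$, which lives over $S^*$ and a priori need not even extend meromorphically. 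Indeed, if $\eta$ merely has a pole of order $m$ along $X_0$, the identity $f^{k+m}\omega=df\wedge d(f^m\eta)$ extends by continuity and only re-proves that $[\omega]$ is torsion --- exactly the ``no new information'' you acknowledge. No argument is supplied that actually lowers $m$ to zero, and that is the entire content of the theorem.

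The paper closes this gap by a quite different mechanism, due to Malgrange, in which Proposition \ref{m} is applied to a form whose periods are \emph{constant} rather than identically zero. One assumes the torsion $H''^T_{f,H}\neq 0$; since a finitely generated $\mathbb{C}\{f\}$-module carrying a connection is automatically torsion-free, $D_{f,H}$ cannot restrict to a connection on the torsion, so $H'^T_{f,H}\neq H''^T_{f,H}$, and the $\mathbb{C}$-linear isomorphism $D_{f,H}:H'_{f,H}\to H''_{f,H}$ of Proposition \ref{p} then produces a class $\omega\in H'_{f,H}$ that is nonzero in the localisation $\mathcal{M}_{f,H}$ but with $D_{f,H}\omega=0$ there. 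Its periods $I(t)$ satisfy $I'(t)=\int_{\gamma(t)}D_{f,H}\omega=0$, hence are constant; Proposition \ref{m} forces the constant to be $0$; and nondegeneracy of the period pairing over $S^*$ then gives $\omega=0$ in $\mathcal{M}_{f,H}$, a contradiction. In short, the connection structure (Proposition \ref{p}) is the essential input that your direct attack on the localisation kernel omits, and without a substitute for it your key step remains unproved.
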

\begin{proof}
The proof is again the same as in \cite{Mal}. Briefly, let $H'^T_{f,H}$ and $H''^T_{f,H}$ be the torsion submodules of the corresponding Brieskorn modules with $H''^T_{f,H}\neq 0$. We have $D_{f,H}H'^T_{f,H}\subset H''^T_{f,H}$ and necessarily $H'^T_{f,H}\neq H''^T_{f,H}$ because then the restriction of $D_{f,H}$ will give a connection on $H'^T_{f,H}=H''^T_{f,H}$ and thus $H''^T_{f,H}=0$. Since $D_{f,H}:H'_{f,H}\rightarrow H''_{f,H}$ is an isomorphism (Proposition \ref{p}) it follows that there exists nonzero $\omega \in H'_{f,H}$ such that $\omega \notin H'^T_{f,H}$ and $D_{f,H}\omega \in H''^T_{f,H}$.   After tensoring with $\mathbb{C}(f)$ we find a form $\omega \in \Omega^n_{f,H}$ such that its class $\omega \in H'_{f,H}\otimes_{\mathbb{C}\{f\}}\mathbb{C}(f)$ satisfies $\omega \neq 0$ and $D_{f,H}\omega=0$. But then, for any section $\gamma(t)\in H_n(X_t,X'_t;\mathbb{C})$ we have:
\[I'(t)=\frac{d}{dt}\int_{\gamma(t)}\omega=\int_{\gamma(t)}D_{f,H}\omega=0,\]
i.e. $I(t)$ is constant. From Proposition \ref{m} we have that $I(t)=0$ and thus $\omega=0$ in $H'_{f,H}\otimes_{\mathbb{C}\{f\}}\mathbb{C}(f)$ which is a contradiction. Thus $H''^T_{f,H}=0$ which proves the theorem.
\end{proof}

Now we will prove the following relative analog of the regularity theorem:
\begin{thm}
\label{t2}
The relative Gauss-Manin connection $D_{f,H}:\mathcal{M}_{f,H}\rightarrow \mathcal{M}_{f,H}$ is regular.
\end{thm}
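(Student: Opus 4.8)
I deduce the regularity of $D_{f,H}$ from the extension structure exhibited in the previous subsection, using the classical (ordinary) regularity theorem as the only non-formal input. Localising the short exact sequence (\ref{ses2}) at the critical value and inverting $f$, i.e. tensoring over $\mathbb{C}\{f\}$ with the fraction field $\mathbb{C}(f)$, yields a short exact sequence of finite-dimensional $\mathbb{C}(f)$-vector spaces
\[
0 \longrightarrow \mathcal{M}_{f|_H} \longrightarrow \mathcal{M}_{f,H} \longrightarrow \mathcal{M}_{f} \longrightarrow 0 ,
\]
where $\mathcal{M}_{f|_H}=H^{n-1}(\Omega^{\bullet}_{f|_H})\otimes_{\mathbb{C}\{f\}}\mathbb{C}(f)$ and $\mathcal{M}_{f}=H^{n}(\Omega^{\bullet}_{f})\otimes_{\mathbb{C}\{f\}}\mathbb{C}(f)$ are the localisations of the ordinary Brieskorn lattices (equivalently, the localised Gauss-Manin systems) of $f|_H$ and of $f$. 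As already observed, over $S^{*}$ the relative connection $D_{f,H}$ restricts on the subbundle to the ordinary connection $D_{f|_H}$ and induces on the quotient the ordinary connection $D_{f}$; hence, after passing to germs at $0$ and localising --- which makes the issue that $D_{f,H}$ maps $H_{f,H}$ only into $H'_{f,H}$ disappear --- the displayed sequence is a short exact sequence in the category of meromorphic connections on $(S,0)$.

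The plan is then to combine two ingredients. First, by the regularity theorem of Brieskorn and Malgrange (\cite{B}, \cite{Mal}), since $f$ and $f|_H$ have isolated critical points the ordinary Gauss-Manin connections on $\mathcal{M}_{f}$ and on $\mathcal{M}_{f|_H}$ have regular singularities at the origin. Second, the full subcategory of meromorphic connections on $(S,0)$ with regular singularities is stable under subobjects, quotients and extensions (\cite{Del}). Being an extension of the regular connection $(\mathcal{M}_{f},D_{f})$ by the regular connection $(\mathcal{M}_{f|_H},D_{f|_H})$, the relative connection $(\mathcal{M}_{f,H},D_{f,H})$ is therefore regular.

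Alternatively, and presumably in the spirit intended here since Proposition \ref{m} was set up for exactly this purpose, one can argue directly as in Malgrange. Pick a $\mathbb{C}\{f\}$-basis $\omega_{1},\dots,\omega_{\mu_{f,H}}$ of the free module $H_{f,H}$ (Theorem \ref{t3}); by Theorem \ref{t1} it restricts, for $0<|t|$ small, to a $\mathbb{C}$-basis of $H^{n}(X_{t},X'_{t};\mathbb{C})$. Choosing a basis $\gamma_{1},\dots,\gamma_{\mu_{f,H}}$ of locally constant sections of the relative homology bundle, the multivalued period matrix $P(t)=\big(\int_{\gamma_{j}(t)}\omega_{i}\big)_{i,j}$ is a fundamental solution matrix of the differential system attached to $D_{f,H}$ (more precisely, to its dual), by the identity $\tfrac{d}{dt}\int_{\gamma(t)}\omega=\int_{\gamma(t)}D_{f,H}\omega$ on which the construction of $D_{f,H}$ was based. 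It is invertible for $t\neq 0$ small, by the relative de Rham isomorphism (\ref{dr0}) together with the perfectness of the pairing between $H^{n}(X_{t},X'_{t};\mathbb{C})$ and $H_{n}(X_{t},X'_{t};\mathbb{C})$; and all of its entries tend to $0$ as $t\to 0$ inside a sector about the positive real ray, by Proposition \ref{m}, so $P(t)$ has moderate growth. Deligne's criterion characterising regular singular points by the moderate growth of a fundamental system of solutions then gives the regularity of $D_{f,H}$.

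In either route the substance is classical and what remains is bookkeeping. For the extension argument the only point to verify is that the localised sequence really is a sequence of connections, i.e. that the connecting map $\delta$ of (\ref{ses2}) and the natural surjection onto $H^{n}(\Omega^{\bullet}_{f})$ commute with $D_{f,H}$; this follows by inspecting the Gelfand--Leray description of $D_{f,H}$ and the definition of $\delta$, and is where I expect the bulk of the (routine) work. For the Malgrange-style argument the one delicate step is that the estimate of Proposition \ref{m} must be used in a full sector, which is exactly what makes the moderate-growth criterion applicable; granting that, no further obstacle arises.
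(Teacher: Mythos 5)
Your proposal is correct, and both of your routes appear in the paper: the second one (moderate growth of the period integrals $\int_{\gamma(t)}\omega_j$ via Proposition \ref{m} and Deligne's criterion) is precisely the paper's proof of Theorem \ref{t2}, while the first one (extension of regular connections, which requires the compatibility of $\delta$ and the projection with the connections established in Proposition \ref{pro}) is given later in the paper as a second proof. The only step you leave implicit in the Malgrange-style route is the passage from the vanishing of the integrals along the single ray $\arg t=0$ (which is all Proposition \ref{m} provides) to moderate growth in a full sector $a\le \arg t\le b$; as in Malgrange, this is done by combining the a priori exponential bound coming from the differential system with the Phragm\'en--Lindel\"of theorem on the corresponding strip.
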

\begin{proof}
The proof is again the same as in \cite{Mal}. Recall (c.f \cite{Del}) that the condition of regularity of a connection is equivalent to the fact that each of the components $I_j(t)$ of the (multivalued) solutions $I(t)=(I_1(t),...,I_{\mu_{f,H}}(t))^T$ of the differential system:
\begin{equation}
\label{pf}
\frac{dI}{dt}=\Gamma(t)I(t),
\end{equation}
where $\Gamma(t)$ is the connection matrix, is of moderate growth, i.e. for $t\rightarrow 0$ and in a fixed sector $a\leq \arg t\leq b$, $(a,b)\in \mathbb{R}^2$, there exist natural numbers $K$ and $N$ such that:
\[|I_j(t)|\leq K|t|^{-N}.\] 
Fixing a basis $\{\omega_1,...,\omega_{\mu_{f,H}}\} \in H'_{f,H}\otimes_{\mathbb{C}\{f\}}\mathbb{C}(f)$ we may consider for a locally constant section $\gamma(t)\in H_n(X_t,X'_t;\mathbb{C})$ the multivalued functions 
\[I_j(t)=\int_{\gamma(t)}\omega_j\]
and the corresponding vector-valued map $I(t)=(I_1(t),...,I_{\mu_{f,H}})$ as a solution of the equation (\ref{pf}) above (the Picard-Fuchs equation, expressing the condition of horizontality of the section $\gamma(t)$ with respect to the dual Gauss-Manin connection in a basis dual to $\omega_j$). Indeed, 
\[I'_j(t)=\int_{\gamma(t)}D_{f,H}\omega_j=\int_{\gamma(t)}\sum_{i=1}^{\mu_{f,H}}\Gamma_{ij}(f)\omega_i=\sum_{i=1}^{\mu_{f,H}}\Gamma_{ij}(t)I_i(t).\]
Thus, to prove regularity it suffices to prove that these integrals are indeed of moderate growth. This follows immediately from Proposition \ref{m} applied to $I_j(t)$ and an application of the Phragm\'en-Lindel\"of theorem for the strip $a\leq \arg t \leq b$ as in \cite{Mal}.
\end{proof}

Combining the regularity of the relative Gauss-Manin connection with the relative monodromy theorem we may obtain a more exact calculation of the asymptotics of integrals of holomorphic forms along the relative vanishing cycles of the boundary singularity. Let us define first some natural trivilisations of the cohomology bundle $R^nf_*\mathbb{C}_{X^*\setminus X'^*}=\cup_{t\in S^*} H^n(X_t,X'_t;\mathbb{C})$. Notice that from Theorem \ref{t1} a basis $\{\alpha_1,...,\alpha_{\mu_{f,H}}\}$ of the cohomology module $H^n(\Omega^{\bullet}_{f}(H))$ extends to a basis of the locally free sheaf $\mathcal{H}^n_{dR}(X,X'/S)$ in a neighborhood of the origin and each fiber $\mathcal{H}^n_{dR}(X,X'/S)_t$ is isomorphic to the cohomology $H^n(X_t,X'_t;\mathbb{C})\otimes_{\mathbb{C}_{S^*}}\mathcal{O}_{S^*,t}$ for $t\neq 0$. Thus, the map $t\in S^* \mapsto \{\alpha_1|_{X_t},...,\alpha_{\mu_{f,H}}|_{X_t}\} \in H^n(X_t,X'_t;\mathbb{C})$ gives a trivilisation of the relative cohomology bundle. Consider now the sheafification of the first relative Brieskorn module $H'_{f,H}$:
\[\mathcal{H}'_{X,X'/S}:=\frac{f_*\Omega^n_{X/S}(X')}{d(f_*\Omega^{n-1}_{X/S}(X'))},\]
and the natural short exact sequence:
\[0\rightarrow \mathcal{H}^n_{dR}(X,X'/S)\rightarrow \mathcal{H}'_{X,X'/S}\stackrel{d}{\rightarrow} f_*\Omega^{n+1}_{X/S}(X')\rightarrow 0.\]
Since the sheaf on the right is concentrated at the origin $0\in S$, there is an isomorphism:
\[\mathcal{H}^n_{dR}(X^*,X'^*/S^*)\cong \mathcal{H}'_{X^*,X'^*/S^*},\]
and so, we may define a trivilisation of the cohomology bundle by starting from a basis of $H'_{f,H}$ instead, and in fact of $H'_{f,H}\otimes_{\mathbb{C}\{f\}}\mathbb{C}(f)$. Such a basis can be found in turn as follows (c.f. \cite{B} for the ordinary case): Let $\{\omega_1,...,\omega_{{\mu}_{f,H}}\}$ be a basis of the second relative Brieskorn module $H''_{f,H}$. Then division by $df$ gives a basis $\{\frac{\omega_1}{df},...,\frac{\omega_{\mu_{f,H}}}{df}\}$ of $H'_{f,H}\otimes_{\mathbb{C}\{f\}}\mathbb{C}(f)$.  If we consider now the sheafification of the second relative Brieskorn module $H''_{f,H}$:
\[\mathcal{H}''_{X,X'/S}:=\frac{f_*\Omega^{n+1}_X}{df\wedge d(f_*\Omega^{n-1}_{X/S}(X'))}\]
and the natural short exact sequence:
\[0\rightarrow \mathcal{H}'_{X,X'/S}\rightarrow \mathcal{H}''_{X,X'/S}\rightarrow f_*\Omega^{n+1}_{X/S}(X')\rightarrow 0,\] 
then, by the same argument as before, there is an isomorphism:
\[\mathcal{H}'_{X^*,X'^*/S^*}\cong \mathcal{H}''_{X^*,X'^*/S^*}.\]
By coherence and freeness of the Brieskorn module the basis $\{\omega_1,...,\omega_{\mu_{f,H}}\}$ extends to a basis of $\mathcal{H}''_{X^*,X'^*/S^*}$ in a neighborhood of the origin, so that $\{\frac{\omega_1}{df},...,\frac{\omega_{\mu_{f,H}}}{df}\}$ extends to a basis of $\mathcal{H}'_{X^*,X'^*/S^*}$ as well. It follows that the map $t\in S^*\mapsto \{\frac{\omega_1}{df}|_{X_t},...,\frac{\omega_{\mu_{f,H}}}{df}|_{X_t}\}\in H^n(X_t,X'_t;\mathbb{C})$ defines a trivilisation of the cohomology bundle. In fact, for any $\omega \in H''_{f,H}$, the holomorphic form $\frac{\omega}{df}|_{X_t}$ is nothing but the Poincar\'e residue at $X_t$ of the form $\frac{\omega}{f-t}$ :
\[\text{Res}_{X_t}(\frac{\omega}{f-t})=\frac{\omega}{df}|_{X_t}.\]
The map $t \in S^*\mapsto s[\omega](t)=\frac{\omega}{df}|_{X_t}\in H^n(X_t,X'_t;\mathbb{C})$ is what A. N. Varchenko called ``a geometric section'' (c.f. \cite{Var1} and also \cite{A00}, \cite{Ku} and references therein). Thus,in order to obtain a triviliasation of the relative cohomology bundle, it suffices to find a basis of $H''_{f,H}$ and by Nakayama's lemma, a basis of the $\mu_{f,H}$-dimensional $\mathbb{C}$-vector space $\frac{H''_{f,H}}{fH''_{f,H}}$ (c.f. Example \ref{ex} below for the quasihomogeneous case).

Fix now a form $\omega \in H''_{f,H}$ and denote by:
\[I_{\omega,\gamma}(t)=<s[\omega](t),\gamma(t)>=\int_{\gamma(t)}\frac{\omega}{df},\]
where $\gamma(t)\in \cup H_{n}(X_t,X'_t;\mathbb{C})$ is a locally constant section of the relative homology bundle. The theorem below is a relative analog of the classical theorem on the asymptotics of integrals obtained by Malgrange \cite{Mal} and others (see again \cite{A00}, \cite{Ku} and references therein):

\begin{thm}
\label{t4}
For $|t|$ sufficiently small there is a convergent expansion in each sector of $\arg t$:
\[I_{\omega,\gamma}(t)=\sum_{\alpha,k}a_{\alpha,k}t^{\alpha}\frac{(lnt)^k}{k!},\]
where:
\begin{itemize}
\item[(i.)] $a_{\alpha,k}$ are vectors in $\mathbb{C}^{\mu_{f,H}}$,
\item[(ii.)] the numbers $\alpha$ are rational numbers $>-1$ which belong in a set of arithmetic progressions with the property that $\lambda=e^{-2\pi i\alpha}$ is an eigenvalue of the relative Picard-Lefschetz monodromy operator in relative homology $H_{n}(X_t,X'_t;\mathbb{C})$,
\item[(iii.)] the numbers $k$ are integers $0\leq k \leq N$ where $N$ is the maximal size of Jordan blocks of the relative monodromy operator. In particular, if the size of the Jordan blocks corresponding to the eigenvalue $\lambda=e^{-2\pi i \alpha}$ is $\leq r$ then $0\leq k \leq r$.
\end{itemize}
\end{thm}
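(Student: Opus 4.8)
The plan is to deduce Theorem \ref{t4} from the combination of two results already established in this section: the regularity of the relative Gauss-Manin connection (Theorem \ref{t2}) and the relative monodromy theorem (Theorem \ref{t}), following verbatim the strategy that Malgrange used in \cite{Mal} for the ordinary case. First I would fix a basis $\{\omega_1,\dots,\omega_{\mu_{f,H}}\}$ of the second relative Brieskorn module $H''_{f,H}$ and, via division by $df$, obtain the geometric sections $s[\omega_j](t)=\frac{\omega_j}{df}|_{X_t}$ trivialising the relative cohomology bundle over $S^*$. The vector of multivalued functions $I(t)=(I_{\omega_1,\gamma}(t),\dots,I_{\omega_{\mu_{f,H}},\gamma}(t))$ then solves a linear Picard–Fuchs system $dI/dt=\Gamma(t)I(t)$ whose connection matrix $\Gamma(t)$ is, by Theorem \ref{t2}, meromorphic at $0$ with a regular singularity there; equivalently, after a meromorphic gauge transformation one may assume $\Gamma(t)=\frac{A}{t}+(\text{holomorphic})$ with $A$ a constant matrix, and moderate growth of every component $I_j(t)$ in every sector $a\le\arg t\le b$ is guaranteed by Proposition \ref{m} together with the Phragmén–Lindelöf argument already invoked in the proof of Theorem \ref{t2}.

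The core of the argument is the standard local theory of regular singular systems (c.f. \cite{Del}): a fundamental solution matrix has the form $\Phi(t)=P(t)\,t^{A}$, where $P(t)$ is a multivalued matrix meromorphic at $0$ whose entries, by the moderate-growth estimate, are actually single-valued meromorphic — hence can be expanded in Laurent series — and $t^{A}=\exp(A\log t)$. Decomposing $A$ into its semisimple and nilpotent parts, $A=A_s+A_n$, and writing the eigenvalues of $A_s$ as $-\alpha$, one obtains an expansion of each entry of $\Phi(t)$, and therefore of $I(t)$ as a linear combination of columns of $\Phi(t)$ against the fixed horizontal section $\gamma$, in the asserted form $\sum_{\alpha,k}a_{\alpha,k}\,t^{\alpha}\frac{(\log t)^{k}}{k!}$, with $a_{\alpha,k}\in\mathbb{C}^{\mu_{f,H}}$ (item (i.)); the powers of $\log t$ run $0\le k\le N$ where $N$ is the size of the largest Jordan block of $A_n$ — and since $t^{A}$ represents the monodromy $e^{2\pi i A}$, which in the dual basis is conjugate to the relative Picard–Lefschetz monodromy $T_{f,H}$, the bound $k\le r$ for the block attached to $\lambda$ follows exactly as in the ordinary case (item (iii.)).

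To pin down item (ii.) — that the exponents $\alpha$ are rational, lie in finitely many arithmetic progressions, satisfy $\alpha>-1$, and have $e^{-2\pi i\alpha}$ an eigenvalue of $T_{f,H}$ — I would argue as follows. The relation $\lambda=e^{-2\pi i\alpha}$ between the monodromy eigenvalues and the residue eigenvalues is the content of Remark \ref{r2}; combined with Theorem \ref{t} (eigenvalues of $T_{f,H}$ are roots of unity) and the algebraicity/Hilbert VII argument of Remark \ref{r2}, this forces every $\alpha$ to be rational and gives the eigenvalue statement. The arithmetic-progression structure comes from the fact that $P(t)$ is Laurent (integer shifts of the leading exponents), and the lower bound $\alpha>-1$ is precisely the point where one uses the vanishing Proposition \ref{m}: since $\omega_j\in\Omega^{n}(H)$ vanishes on the boundary, $I_{\omega_j,\gamma}(t)=\int_{\gamma(t)}\frac{\omega_j}{df}=\frac{d}{dt}\int_{\sigma\gamma(t)/2\pi i}\frac{\omega_j}{f-t}$ and $\int_{\gamma(t)}\omega_j\to 0$ as $t\to 0$, which rules out any term with $\alpha\le-1$ (a pole, or a non-integrable $t^{-1}$-type singularity) in the primitive, hence bounds the exponents of $I_{\omega_j,\gamma}$ from below by $-1$; this is the only place where the boundary-vanishing hypothesis is genuinely needed, and I expect it to be the most delicate bookkeeping step. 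Finally, since $H''_{f,H}$ is free of rank $\mu_{f,H}$ (Theorem \ref{t3}), an arbitrary $\omega\in H''_{f,H}$ is an $\mathbb{C}\{f\}$-combination of the $\omega_j$, so the expansion for $I_{\omega,\gamma}(t)$ follows by linearity, multiplying by convergent power series in $t$ (which only shift the progressions by nonnegative integers and do not create new leading exponents below the existing ones), completing the proof.
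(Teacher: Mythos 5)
Your proposal follows essentially the same route as the paper's proof: regular-singularity theory for the Picard--Fuchs system ($Y(t)=Z(t)t^{R}$ with $Z$ meromorphic at $0$ by Theorem \ref{t2}), the relative monodromy theorem for rationality of the exponents, and Proposition \ref{m} applied to the primitive $V_{\eta,\gamma}(t)=\int_{\gamma(t)}\eta$ with $D_{f,H}\eta=\omega$, followed by differentiation, to force $\alpha>-1$. The only point to tighten is the bookkeeping in that last step: Proposition \ref{m} rules out all exponents $<0$ \emph{and} the logarithmic terms at $\alpha=0$ in the primitive (not merely the terms with $\alpha\le -1$, which by itself would only give $\alpha>-2$ after differentiation), and it is precisely the exclusion of $(\ln t)^{k}$, $k\ge 1$, at $\alpha=0$ that prevents an exponent $-1$ from surviving in $I_{\omega,\gamma}$.
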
 
\begin{proof}
Let $\eta \in \mathcal{H}'_{X,X'/S}$ be a local section of the Brieskorn module such that $D_{f,H}\eta=d\eta=\omega \in \mathcal{H}''_{X,X'/S}$. Then
\begin{equation}
\label{der}
I_{\omega,\gamma}(t)=\int_{\gamma}\frac{d\eta}{df}=\frac{d}{dt}\int_{\gamma(t)}\eta=V'_{\eta,\gamma}(t),
\end{equation}
where $V_{\eta,\gamma}(t)=\int_{\gamma(t)}\eta$. Since the map $D_{f,H}:\mathcal{H}'_{X,X'/S}\rightarrow \mathcal{H}''_{X,X'/S}$ is an isomorphism we may study first the expansion of the integral $V_{\eta,\gamma}(t)$ into asymptotic series. Let $\Lambda=\{\lambda_1,...,\lambda_{\mu_{f,H}}\}$ be the eigenvalues of the relative monodromy operator $T_{f,H}$ in cohomology $H^n(X_t,X'_t;\mathbb{C})$. Then $\{-\lambda_1,...,-\lambda_{\mu_{f,H}}\}$ are the eigenvalues of the relative monodromy operator $T^{f,H}$ in homology $H_n(X_t,X'_t;\mathbb{C})$. Let
\[\alpha_j=-\frac{1}{2\pi i}\ln \lambda_j\]
be the eigenvalues of the matrix $R$, where:
\[T^{f,H}=e^{2\pi i R}.\]
By the relative monodromy Theorem \ref{t}, the eigenvalues $\lambda_j=e^{-2\pi i \alpha_j}$ are roots of unity and so $\alpha_j$ are rational numbers defined modulo $\mathbb{Z}$. Denote by
\[L(\lambda_j)=\{\alpha_j^0, \alpha_j^0+1, \alpha_j^0+2,...\}\]
the arithmetic progression with one suitable value of $\alpha_j$. Let now $\{\omega_1,...,\omega_{\mu_{f,H}}\}$ be a local basis of the sheaf $\mathcal{H}'_{X,X'/S}$. Then the vector:
\[V(t)=(\int_{\gamma(t)}\omega_1,...,\int_{\gamma(t)}\omega_{\mu_{f,H}})^T\] 
is a solution of the Picard-Fuchs equation:
\[y'(t)=\Gamma^t(t)y(t),\]
where $\Gamma(t)$ is the connection matrix of the Gauss-Manin connection $D_{f,H}$ with respect to the basis $\{\omega_1,...,\omega_{\mu_{f,H}}\}$. A fundamental solution of this equation is given by the period matrix:
\[Y(t)=(\int_{\gamma_j(t)}\omega_i)_{i,j=1,...,\mu_{f,H}},\]
where $\{\gamma_1(t),...,\gamma_{\mu_{f,H}}(t)\}$ is a locally constant (horizontal) basis of the homology bundle $\cup H_n(X_t,X'_t;\mathbb{C})$. By well known theorems of differential equations (c.f. \cite{Del}), the period matrix can be represented in the form:
\[Y(t)=Z(t)t^R,\]
where $Z(t)$ is a single-valued holomorphic matrix on $S^*$. In particular, there is a constant matrix $C$ such that:
\[V(t)=Z(t)t^RC.\]
By the regularity Theorem \ref{t2}, the matrix $Z(t)$ is meromorphic at the origin. After a choice of a Jordan basis of the relative monodromy operator and the corresponding structure of the matrix $t^R$, we obtain an expansion:
\[V(t)=\sum_{\lambda \in \Lambda}\sum_{\alpha \in L(\lambda)}\sum_{k=0}^Na_{\alpha,k}t^{\alpha}\frac{(\ln t)^k}{k!}.\]
But by Proposition \ref{m} we have $\lim_{t\rightarrow 0}V(t)=0$ and thus all $\alpha \geq 0$. Moreover, if $\alpha =0$ then $a_{\alpha,k}=0$ for all $k\geq 1$.  Thus we have obtained the required expansion for the function $V(t)=V_{\eta,\gamma}(t)$. Then, by differentiating and using equation (\ref{der}) we obtain the required expansion for $I_{\omega,\gamma(t)}$. Thus, it suffices to prove only (ii.) But for $\alpha =0$ we have only constants in the expansion of $V(t)$ and thus all $\alpha > -1$ in the expansion of $I_{\omega,\gamma}(t)$. This finishes the proof.    
\end{proof}

\begin{exmp}[Quasihomogeneous Boundary Singularities.]
\label{ex}
By a quasihomogeneous boundary singularity $(f,H)$ we mean a quasihomogeneous germ $f$ at the origin of $\mathbb{C}^{n+1}$ such as its restriction $f|_H$ on the boundary $H=\{x=0\}$ is also quasihomogeneous. For example, all the simple boundary singularities in Arnol'd's list \cite{A1} are quasihomogeneous. It is easy to see that this is equivalent (analogously with \cite{Sa}) to that $f\in J_{f,H}$, where $J_{f,H}=(x\frac{\partial f}{\partial x}, \frac{\partial f}{\partial y_1},...,\frac{\partial f}{\partial y_n})$ is the Jacobian ideal of the boundary singularity. Equivalently this implies that $fH''_{f,H}=df\wedge H'_{f,H}$, i.e. 
\[fD_{f,H}H'_{f,H}=H'_{f,H},\]
that is, the operator $D_{f,H}=\frac{d}{df}$  has a pole of first order at the origin. The residue of the connection is then the linear operator between the $\mu_{f,H}$-dimensional $\mathbb{C}$-vector spaces:
\[\text{Res}_0D_{f,H}:\frac{H''_{f,H}}{fH''_{f,H}}\rightarrow \frac{H''_{f,H}}{fH''_{f,H}},\]
where:
\[\frac{H''_{f,H}}{fH''_{f,H}}\cong \frac{H''_{f,H}}{df\wedge H'_{f,H}}\cong \Omega^{n+1}_{f}(H)\cong \mathcal{Q}_{f,H}.\]
In partricular, by Nakayama's lemma, a monomial basis $e_m=x^{m_1}y_1^{m_2}...y_n^{m_{n+1}}$, $m=(m_1,...,m_{n+1})\in A$, $|A|=\mu_{f,H}$ of the vector space $\mathcal{Q}_{f,H}$, lifts to a basis $\omega_{m}=e_mdx\wedge dy^n$ of the relative Brieskorn module $H''_{f,H}$. An easy calculation shows that the forms $\omega_m$ are exactly the eigenvectors of the operator $fD_{f,H}$:
\[fD_{f,H}\omega_m=(\alpha(m)-1)\omega_m,\]
where:
\[\alpha(m)=\sum_{i=1}^{n+1}w_i(m_i+1),\]
and $(w_1,...w_{n+1})$ are the quasihomogeneous weights of $f$. Thus, the residue $\text{Res}_0D_{f,H}$ is a semisimple operator and in particular, the relative Picard-Lefschetz monodromy operator:
\[T_{f,H}=e^{-2\pi i \text{Res}_0D_{f,H}}\]
is semisimple, with eigenvalues:
\[\lambda_m=e^{-2\pi i \alpha(m)}.\]
Moreover, for any $(n+1)$-form $\omega$ and any locally constant relative cycle $\gamma(t) \in H_n(X_t,X'_t;\mathbb{C})$ there exists an asymptotic expansion for $t\rightarrow 0$:
\[I(t)=\int_{\gamma(t)}\frac{\omega}{df}=\sum_{\lambda \in \Lambda}\sum_{\alpha \in L(\lambda)}a_{\alpha}t^{\alpha-1},\]
where for each $\lambda_m$ $\alpha \in L(\lambda_m)=\{\alpha(m), \alpha(m)+1, \alpha(m)+2,...\}$ and $a_{\alpha}\in \mathbb{C}^{\mu_{f,H}}$. 

Let us calculate the numbers $\alpha(m)$ for the $A_{k}$, $B_{k}$, $C_{k}$ and $F_4$ singularities on the plane $\mathbb{C}^2$ with boundary $H=\{x=0\}$, i.e. the simple boundary singularities in Arnol'd's list \cite{A1}:
\begin{itemize}
\item[$A_{k}$:] The normal form is: $f=x+y^{k+1}$, $k=\mu_{f,H}\geq 1$. It is quasihomogeneous with weights $(w_1=1, w_2=\frac{1}{k+1})$. The monomials $1,y,...,y^{k-1}$ form a basis of $\mathcal{Q}_{f,H}$ and thus:
\[H''_{f,H}=df\wedge H''_{f|_H}=\text{span}_{\mathbb{C}\{f\}}\{dx\wedge dy, ydx\wedge dy,...,y^{k-1}dx\wedge dy\}.\]
In particular:
\[\alpha(m)=\{\frac{k+2}{k+1},...,\frac{2k+1}{k+1}\}.\]
\item[$B_{k}$:] The normal form is: $f=x^k+y^{2}$, $k=\mu_{f,H}\geq 2$. It is quasihomogeneous with weights $(w_1=\frac{1}{k}, w_2=\frac{1}{2})$. The monomials $1,x,...,x^{k-1}$ form a basis of $\mathcal{Q}_{f,H}$ and thus:
\[H''_{f,H}=\text{span}_{\mathbb{C}\{f\}}\{dx\wedge dy, xdx\wedge dy,...,x^{k-1}dx\wedge dy\}.\]
In particular:
\[\alpha(m)=\{\frac{k+2}{2k},...,\frac{3k}{2k}=\frac{3}{2}\}.\]
\item[$C_{k}$:] The normal form is: $f=xy+y^{k}$,  $k=\mu_{f,H}\geq 2$. It is quasihomogeneous with weights $(w_1=\frac{k-1}{k}, w_2=\frac{1}{k})$. The monomials $1,y,...,y^{k-1}$ form a basis of $\mathcal{Q}_{f,H}$ and thus:
\[H''_{f,H}=\text{span}_{\mathbb{C}\{f\}}\{dx\wedge dy, ydx\wedge dy,...,y^{k-1}dx\wedge dy\}.\]
In particular:
\[\alpha(m)=\{1=\frac{k}{k},\frac{k+1}{k}...,\frac{2k-1}{k}\}.\]
\item[$F_4$:] The normal form is: $f=x^2+y^{3}$, $\mu_{f,H}=4$. It is quasihomogeneous with weights $(w_1=\frac{1}{2}, w_2=\frac{1}{3})$. The monomials $1,x,y,xy$ form a basis of $\mathcal{Q}_{f,H}$ and thus:
\[H''_{f,H}=\text{span}_{\mathbb{C}\{f\}}\{dx\wedge dy, xdx\wedge dy,ydx\wedge dy, xydx\wedge dy\}.\]
In particular:
\[\alpha(m)=\{\frac{5}{6},\frac{4}{3}, \frac{7}{6}, \frac{5}{3}\}.\]
\end{itemize}  
\end{exmp}
\begin{rem}
As it is easy to see, in all the examples above, the following splitting (in the category of $\mathbb{C}\{f\}$-modules) for the relative Brieskorn module is valid:
\[H''_{f,H}\cong H''_f\oplus df\wedge H''_{f|_H}.\]
In the next section we will show that this is a general fact for all isolated boundary singularities.
\end{rem}

\subsection{Relations between the Relative and Ordinary Brieskorn Modules}

In the previous section we showed the regularity of the relative Gauss-Manin connection $D_{f,H}$ and the freeness of the Brieskorn module $H''_{f,H}$ independently of the regularity of the ordinary Gauss-Manin connections $D_f$ and $D_{f|_H}$, and the freeness of the ordinary Brieskorn modules $H''_f$ and $H''_{f|_H}$ respectively. On the other hand we know from the short exact cohomology sequence (\ref{ses2}) that the relative cohomology module $H_{f,H}:=H^n(\Omega^{\bullet}_{f}(H))$ is an extension of the two ordinary cohomology modules $H_f:=H^n(\Omega^{\bullet}_f)$ and $H_{f|_H}:=H^{n-1}(\Omega^{\bullet}_{f|_H})$, i.e. there is a short exact sequence of free $\mathbb{C}\{f\}$-modules of finite type:
\begin{equation}
\label{ses3}
0\rightarrow H_{f|_H}\stackrel{\delta}{\rightarrow}H_{f,H}\rightarrow H_f\rightarrow 0.
\end{equation}  
 
Here we will show that the relative Brieskorn modules $H'_{f,H}$ and $H''_{f,H}$ are also extensions of the two ordinary Brieskorn modules:
\[H'_{f|_H}:=\frac{i_*\Omega^{n-1}_{f|_H}}{di_*\Omega^{n-2}_{f|_H}}\cong \frac{i_*\Omega^{n-1}_H}{df\wedge i_*\Omega^{n-2}_H+di_*\Omega^{n-2}_H}\stackrel{df\wedge}{\subset} H''_{f|_H}:=\frac{i_*\Omega^n_H}{df\wedge di_*\Omega^{n-2}_H},\]
\[H'_f:=\frac{\Omega^n_f}{d\Omega^{n-1}_f}\cong \frac{\Omega^n}{df\wedge \Omega^{n-1}+d\Omega^{n-1}}\stackrel{df\wedge}{\subset}H''_f:=\frac{\Omega^{n+1}}{df\wedge \Omega^{n-1}}.\]
The statement for $H'_{f,H}$ is proved in the proposition below and for $H''_{f,H}$ immediately after that:   
\begin{prop}
\label{pro}
There exist $\mathbb{C}\{f\}$-linear map $\delta'$ that makes the following diagram commutative:
\begin{equation}
\label{cd8}
\begin{CD}
0 @>>>H_{f|_H} @>\delta >> H_{f,H} @> p >> H_f @>>> 0 \\
@. @V D_{f|_H} V \wr V           @V D_{f,H} V \wr V               @V D_f V \wr V               \\
0 @>>>H'_{f|_H} @> \delta' >> H'_{f,H} @> p' >> H'_f @>>> 0\\
\end{CD}
\end{equation}
Moreover, there exists a $\mathbb{C}$-linear map $\delta''$ which extends the above diagram to a commutative diagram:
\begin{equation}
\label{cd9}
\begin{CD}
0 @>>>H'_{f|_H} @>\delta' >> H'_{f,H} @> p' >> H'_f @>>> 0 \\
@. @V D_{f|_H} V \wr V           @V D_{f,H} V \wr V               @V D_f V \wr V               \\
0 @>>>H''_{f|_H} @> \delta'' >> H''_{f,H} @> p'' >> H''_f @>>> 0\\
\end{CD}
\end{equation}
\end{prop}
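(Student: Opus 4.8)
The plan is to produce all four maps by transport of structure along the Gauss--Manin operators. By Proposition \ref{p}, together with the classical (ordinary) analogues of Proposition \ref{p} and Theorem \ref{t3}, the operators
\[
D_{f|_H}\colon H_{f|_H}\xrightarrow{\sim}H'_{f|_H}\xrightarrow{\sim}H''_{f|_H},\qquad
D_{f,H}\colon H_{f,H}\xrightarrow{\sim}H'_{f,H}\xrightarrow{\sim}H''_{f,H},\qquad
D_f\colon H_f\xrightarrow{\sim}H'_f\xrightarrow{\sim}H''_f
\]
are isomorphisms of the underlying $\mathbb{C}$-vector spaces. I would first define $p'$ and $p''$ as the maps induced by the inclusion of complexes $\Omega^{\bullet}(H)\hookrightarrow\Omega^{\bullet}$ --- explicitly $p'$ is induced by $\Omega^n(H)\hookrightarrow\Omega^n$ and $p''$ by the equality $\Omega^{n+1}(H)=\Omega^{n+1}$ --- so that they are manifestly $\mathbb{C}\{f\}$-linear; they are surjective because $d\,\Omega^{n-1}_{f|_H}=\Omega^n_{f|_H}$ (i.e. $H^n(\Omega^{\bullet}_{f|_H})=0$), which lets one push any $n$-form into one vanishing on $H$ modulo $df\wedge\Omega^{n-1}+d\Omega^{n-1}$. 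Then I would set $\delta':=D_{f,H}\circ\delta\circ D_{f|_H}^{-1}$ and $\delta'':=D_{f,H}\circ\delta'\circ D_{f|_H}^{-1}$. With these choices the left squares of (\ref{cd8}) and (\ref{cd9}) commute by construction and the right squares commute by naturality of the Gauss--Manin operators with respect to $\Omega^{\bullet}(H)\hookrightarrow\Omega^{\bullet}$; exactness of the two bottom rows is then a short diagram chase (the verticals are isomorphisms and the top rows are exact). Note also that $\delta$ lands in $H_{f,H}$ and $D_{f,H}$ carries $H_{f,H}$ into $H'_{f,H}$ and $H'_{f,H}$ into $H''_{f,H}$, so $\delta'$ and $\delta''$ genuinely take values in the Brieskorn lattices and not merely in the localisation $\mathcal{M}_{f,H}$.

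The one point that is not formal is the $\mathbb{C}\{f\}$-linearity of $\delta'$ claimed in the first part; the map $\delta''$ of the second part is only asserted to be $\mathbb{C}$-linear, which is immediate from its definition. The conjugation formula does not make $\delta'$ visibly $\mathcal{O}_S$-linear, because the operators $D_{\bullet}$ are only $\mathbb{C}$-linear: they obey a Leibniz rule, not $\mathcal{O}_S$-linearity. To remedy this I would argue geometrically, as in the previous subsections. Over $S^{*}$ the quotients $H'/H$ and $H''/H'$ are supported at the origin, so the sheafifications $\mathcal{H}^n_{dR}(X,X'/S)$, $\mathcal{H}'_{X,X'/S}$ and $\mathcal{H}''_{X,X'/S}$ coincide there and, via the de Rham isomorphism (\ref{dr0}), are all identified with the sheaf of sections of the relative Milnor cohomology bundle $\bigcup_{t\in S^*}H^n(X_t,X'_t;\mathbb{C})$; likewise for the $f$- and $f|_H$-versions. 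Under these identifications $\delta'$ restricts over $S^{*}$ to the topological coboundary $\delta$ of the short exact sequence (\ref{ses0}) of Milnor-fibre cohomologies --- this is the de Rham incarnation of the adjunction $\langle\delta'\alpha,\gamma(t)\rangle=\langle\alpha,\partial\gamma(t)\rangle$ between the coboundary and the homology boundary $\partial\colon H_n(X_t,X'_t)\to H_{n-1}(X'_t)$ --- and the topological coboundary is induced by a morphism of local systems, hence is $\mathcal{O}_{S^{*}}$-linear. Finally, since $H'_{f|_H}$ and $H'_{f,H}$ are coherent and free over $\mathbb{C}\{f\}$, a $\mathbb{C}$-linear map between them that becomes $\mathbb{C}\{f\}$-linear after inverting $f$ is already $\mathbb{C}\{f\}$-linear; this transports the conclusion across the origin $0\in S$ and completes (\ref{cd8}). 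For (\ref{cd9}) nothing further is needed.

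I expect the main obstacle to be exactly this $\mathcal{O}_S$-linearity of $\delta'$: the conjugation definition simultaneously forces commutativity of both diagrams and is clearly the ``right'' map, but it obscures the linearity. One must either run the comparison with the topological picture above, or else give a direct description of $\delta'$ on forms, of the shape $[\alpha]\mapsto\big[d\bar\alpha-df\wedge\widetilde{G(d\alpha)}\big]$, where $\bar\alpha\in\Omega^{n-1}$ lifts a representative $\alpha\in\Omega^{n-1}_H$ and $G(d\alpha)\in\Omega^{n-1}_H$ is the Gelfand--Leray form of $d\alpha$ relative to $f|_H$, and then verify that the right-hand side lies in $\Omega^n(H)$, is independent of all choices modulo $df\wedge\Omega^{n-1}(H)+d\Omega^{n-1}(H)$, and intertwines the operators $D_{\bullet}$. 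The delicate bookkeeping there --- in particular checking that, for the representatives $\alpha$ that actually occur (those in the image of $D_{f|_H}$), $d\alpha$ does lie in $df\wedge\Omega^{n-1}_H$, so that $G(d\alpha)$ is defined --- is where the real work sits. Commutativity, exactness, the descriptions and surjectivity of $p',p''$, and the lattice-valuedness of $\delta',\delta''$ are all routine.
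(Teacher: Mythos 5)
Your construction is correct in substance, but it reaches the first diagram by a genuinely different route from the paper. The paper does not define $\delta'$ by conjugation and then fight for its $\mathbb{C}\{f\}$-linearity: it realises all three operators $D_{f|_H}$, $D_{f,H}$, $D_f$ as connecting homomorphisms of the columns of the $3\times 3$ diagram (\ref{cd10}) (Koszul subcomplex $\hookrightarrow$ absolute complex $\twoheadrightarrow$ relative de Rham complex), composed with the division-by-$df$ isomorphisms supplied by Lemma \ref{rdrd}; naturality of connecting homomorphisms in that diagram then gives the commutativity of (\ref{cd8}) for free, with $\delta'$ appearing as the connecting homomorphism of the short exact sequence of truncated complexes $0\to{}'\Omega^{\bullet}_{f}(H)\to{}'\Omega^{\bullet}_{f}\to{}'\Omega^{\bullet}_{f|_H}\to 0$ --- hence manifestly $\mathbb{C}\{f\}$-linear, which is exactly the point you single out as non-formal. (A byproduct of that route is the bijectivity of $D_{f,H}\colon H_{f,H}\to H'_{f,H}$, i.e.\ the half of Proposition \ref{p} whose proof the paper defers.) Your alternative --- conjugate, identify $\delta'$ over $S^{*}$ with the flat, $\mathcal{O}_{S^{*}}$-linear topological coboundary, and kill $\delta'(g\omega)-g\,\delta'(\omega)$ as a torsion element of the free module $H'_{f,H}$ --- is sound, since Theorem \ref{t1}(ii), the analytic/topological comparison of the connections, the flatness of $\delta$, and Theorem \ref{t3} are all available at this point; it buys a softer argument at the price of invoking more of the transcendental comparison, where the paper stays purely algebraic. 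One caution: you use the bijectivity of $D_{f,H}\colon H_{f,H}\to H'_{f,H}$ for the middle vertical of (\ref{cd8}) and for the diagram chase giving exactness of the bottom row; this is asserted in Proposition \ref{p} but its proof is postponed to precisely the present argument, so to avoid circularity you should either prove it first by the connecting-homomorphism argument (it follows from $H^{n}(\Omega^{\bullet}(H))=H^{n+1}(\Omega^{\bullet}(H))=0$ together with Lemma \ref{rdrd}) or obtain it from the five lemma after establishing exactness of the bottom row independently. For the second diagram your definitions $\delta''=D_{f,H}\delta'D_{f|_H}^{-1}$ and $p''=D_{f}p'D_{f,H}^{-1}$ coincide with the paper's.
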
 
\begin{proof}
Let us prove first the claim for the diagram (\ref{cd8}). It depends on the algebraic definition of the Gauss-Manin connections involved, i.e. as connecting homomorphisms in certain long exact cohomology sequences (c.f. \cite{Mal} for the ordinary case). More specifically, consider the stalk at the origin of the diagram of short exact sequences (\ref{cd3}):
\begin{equation}
\label{cd10}
\begin{CD}
@. 0   @. 0 @. 0 \\
@. @VVV           @VVV               @VVV               \\
0 @>>>df\wedge \Omega^{\bullet-1}(H) @>>> df\wedge \Omega^{\bullet-1} @>>> df\wedge i_*\Omega^{\bullet-1}_H @>>> 0 \\
@. @VVV           @VVV               @VVV               \\
0 @>>>\Omega^{\bullet}(H) @>>> \Omega^{\bullet} @>>> i_*\Omega^{\bullet}_H @>>> 0\\
@. @VVV           @VVV               @VVV               \\
0 @>>> \Omega^{\bullet}_{f}(H) @>>>  \Omega^{\bullet}_{f} @>>>  i_*\Omega^{\bullet}_{f|_H} @>>> 0\\
@. @VVV           @VVV               @VVV               \\
@. 0   @. 0 @. 0 \\
\end{CD}
\end{equation}
Taking the corresponding long exact cohomology sequences we obtain a commutative diagram whose part containing the corresponding connecting homomorphisms is depicted below:
\begin{equation}
\label{cd11}
\begin{CD}   
   @.       @VVV              @VVV    @VVV           @. \\
 @>>> H^{p-1}(\Omega^{\bullet}_{f|_H}) @> \delta >>  H^p(\Omega^{\bullet}_{f}(H)) @>>>  H^p(\Omega^{\bullet}_{f}) @>>> \\
 @. @V \partial_{f|_H}VV           @V \partial_{f,H} VV               @V \partial_f VV               \\
 @>>>H^p(df\wedge i_*\Omega^{\bullet}_H) @>\delta' >> H^{p+1}(df\wedge \Omega^{\bullet}(H)) @>>> H^{p+1}(df\wedge \Omega^{\bullet}) @>>>  \\
@. @VVV           @VVV               @VVV              \\
 @>>>H^p(\Omega^{\bullet}_H) @>\partial >> H^{p+1}(\Omega^{\bullet}(H)) @>>> H^{p+1}(\Omega^{\bullet}) @>>> \\
@. @VVV           @VVV               @VVV               \\
 @>>> H^p(\Omega^{\bullet}_{f|_H}) @>\delta >>  H^{p+1}(\Omega^{\bullet}_{f}(H)) @>>>  H^{p+1}(\Omega^{\bullet}_{f}) @>>>  \\
@. @VVV           @VVV               @VVV               \\
\end{CD}
\end{equation}
Consider now multiplication by $df\wedge $ in each of the complexes $\Omega^{\bullet}$, $\Omega^{\bullet}(H)$ and $i_*\Omega^{\bullet}_H$. By the relative de Rham division lemma \ref{rdrd} it induces, for all $p\leq n$ a commutative diagram:
\begin{equation}
\label{cd12}
\begin{CD}
0 @>>>\Omega^{p}_{f}(H) @>>> \Omega^{p}_f @>>> \Omega^{p}_{f|_H} @>>> 0\\
@. @V df\wedge V \wr V           @V df\wedge V\wr V               @Vdf\wedge V\wr V               \\
0 @>>>df\wedge \Omega^{p}(H) @>>> df\wedge \Omega^{p} @>>> df\wedge i_*\Omega^{p}_{H} @>>> 0 \\
\end{CD}
\end{equation}
where the vertical arrows are isomorphisms. Since $df\wedge $ commutes with each of the differentials in the relative complexes we obtain isomorphisms in cohomologies for all $p$:
\[H^p('\Omega^{\bullet}_f)\cong H^{p+1}(df\wedge \Omega^{\bullet}), \hspace{0.3cm} H^p('\Omega^{\bullet}_{f}(H))\cong H^{p+1}(df\wedge \Omega^{\bullet}(H)),\]
\[H^{p-1}('\Omega^{\bullet}_{f|_H})\cong H^{p}(df\wedge i_*\Omega^{\bullet}_H)\cong H^p(df'\wedge \Omega^{\bullet}_H),\]
where $'\Omega^{\bullet}_f$, $'\Omega^{\bullet}_{f}(H)$ and $'\Omega^{\bullet}_{f|_H}$ are the complexes $\Omega^{\bullet}_f$, $\Omega^{\bullet}_{f}(H)$ and $\Omega^{\bullet}_{f|_H}$ with their last terms replaced by zero. 
Putting these back in the diagram (\ref{cd11}) we obtain:
\begin{equation}
\label{cd13}
\begin{CD}   
   @.       @VVV              @VVV    @VVV           @. \\
 @>>> H^{p-1}(\Omega^{\bullet}_{f|_H}) @> \delta >>  H^p(\Omega^{\bullet}_{f}(H)) @>>>  H^p(\Omega^{\bullet}_{f}) @>>> \\
 @. @V D_{f|_H} V \wr V           @V D_{f,H} V \wr V               @V D_f V\wr V               \\
 @>>>H^{p-1}('\Omega^{\bullet}_{f|_H}) @>\delta' >> H^p('\Omega^{\bullet}_{f}(H)) @>>> H^p('\Omega^{\bullet}_{f}) @>>>  \\
@. @VVV           @VVV               @VVV              \\
 @>>>H^p(\Omega^{\bullet}_H) @>\partial >> H^{p+1}(\Omega^{\bullet}(H)) @>>> H^{p+1}(\Omega^{\bullet}) @>>> \\
@. @VVV           @VVV               @VVV               \\
 @>>> H^p(\Omega^{\bullet}_{f|_H}) @>\delta >>  H^{p+1}(\Omega^{\bullet}_{f}(H)) @>>>  H^{p+1}(\Omega^{\bullet}_{f}) @>>>  \\
@. @VVV           @VVV               @VVV               \\
\end{CD}
\end{equation}
where the map $\delta'$ is the connecting homomorphism in the long exact cohomology sequence induced by the short exact sequence:
\[0\rightarrow '\Omega^{\bullet}_{f}(H)\rightarrow '\Omega^{\bullet}_{f}\rightarrow '\Omega^{\bullet}_{f|_H}\rightarrow 0,\]
and it is thus $\mathbb{C}\{f\}$-linear. An easy calculation shows also that it is defined by the same rule with $\delta$. The first series of vertical maps in (\ref{cd13}) are the corresponding Gauss-Manin connections which are obtained as the composition of the maps in (\ref{cd11}) $\partial_{f|_H}$, $\partial_{f,H}$ and $\partial_f$ respectively, with the following isomorphisms: 
\[H^p('\Omega^{\bullet}_f)\cong H^p(\Omega^{\bullet}_f), \hspace{0.3cm} H^p('\Omega^{\bullet}_{f}(H))\cong H^p(\Omega^{\bullet}_f(H)),\]
\[H^{p-1}('\Omega^{\bullet}_{f|_H})\cong H^{p-1}(\Omega^{\bullet}_{f|_H}),\]
for all $p<n$, whereas for $p=n$:
\[H^n('\Omega^{\bullet}_f)\cong H'_f, \hspace{0.3cm} H^n('\Omega^{\bullet}_{f}(H))\cong H'_{f,H},\]
\[H^{n-1}('\Omega^{\bullet}_{f|_H})\cong H'_{f|_H}.\]
But for all $p<n$ all the cohomologies (except the zero ones) in the diagram (\ref{cd13}) above are zero, while for $p=n$ we obtain the commutative diagram (\ref{cd8}).  Finally, to obtain the commutative diagram (\ref{cd9}) it suffices to set 
\[\delta''=D_{f,H}\delta' D_{f|_H}^{-1}, \hspace{0.3cm} p''=D_fp'D_{f,H}^{-1}.\]
The map $\delta''$ takes a class $\omega \in H''_{f|_H}$ to the class of the differential $d\bar{\omega} \in H''_{f,H}$, where $\bar{\omega} \in \Omega^{n}$ is a lift of a representative of $\omega$. It is obvious that this map is $\mathbb{C}$-linear. This finishes the proof.
\end{proof}

In the proposition above the map $\delta''$ is not $\mathbb{C}\{f\}$-linear and so the short exact sequence in the bottom row of diagram (\ref{cd9}) is only short exact for the underlying $\mathbb{C}$-vector spaces. To show that the relative Brieskorn module $H''_{f,H}$ is an extension of the two ordinary Brieskorn modules $H''_{f|_H}$, $H''_f$, we identify first $H''_{f|_H}$ with $D_{f|_H}H'_{f|_H}=dH'_{f|_H}$, which is a free $\mathbb{C}\{f\}$-module of rank $\mu_{f|_H}$. The inclusion $df\wedge d\Omega^{n-1}(H)\subset df\wedge d\Omega^{n-1}$ induces a natural projection $\pi: H''_{f,H}\rightarrow H''_f$ whose kernel is exactly the module $df\wedge dH'_{f|_H}$. By the fact that $H''_f$ is free, we obtain a split short exact sequence of $\mathbb{C}\{f\}$-modules:
\[0\rightarrow dH'_{f|_H}\stackrel{df\wedge}{\rightarrow} H''_{f,H}\stackrel{\pi}{\rightarrow}H''_f\rightarrow 0,\] 
which is what we wanted to prove. This gives also another direct proof of the relative Sebastiani Theorem \ref{t3}:
\[H''_{f,H}\cong \mathbb{C}\{f\}^{\mu_{f,H}}.\]

As another immediate corollary of the above proposition we obtain a second proof of the  regularity Theorem \ref{t2} for the relative Gauss-Manin connection: indeed, both of the commutative diagrams (\ref{cd8}), (\ref{cd9}) give, after localisation, the following commutative diagram of finite dimensional $\mathbb{C}(f)$-vector spaces:
\begin{equation}
\begin{CD}
0 @>>>\mathcal{M}_{f|_H} @>>> \mathcal{M}_{f,H} @>>> \mathcal{M}_{f} @>>> 0\\
@. @V D_{f|_H}V V           @V D_{f,H} VV               @VD_f VV               \\
0 @>>>\mathcal{M}_{f|_H} @>>> \mathcal{M}_{f,H} @>>> \mathcal{M}_{f} @>>> 0 \\
\end{CD}
\end{equation}
The claim follows then from a well known proposition \cite{Del} according to which the connection $D_{f,H}$ is regular if and only if both $D_{f|_H}$ and $D_{f}$ are. 

\begin{exmp}
Let us describe as an example the $B_2$ singularity in Arnold's list in $\mathbb{C}^3$. This has the normal form:
\[f(x,y,z)=x^2+y^2+z^2, \hspace{0.2cm} H=\{x=0\}.\]
Here the relative monodromy captures both the trivial monodromy of the plane curve $f|_H=y^2+z^2$, as well as the ordinary Dehn twist. Indeed, the ordinary Brieskorn modules of $f|_H$ and $f$ are free of rank one:
\[H''_{f|_H}=\text{span}_{\mathbb{C}\{f\}}\{dy\wedge dz\}, \hspace{0.2cm} H''_{f}=\text{span}_{\mathbb{C}\{f\}}\{dx\wedge dy\wedge dz\},\]
and the relative Brieskorn module is also free of rank two:
\[H''_{f,H}\cong H''_f\oplus df\wedge H''_{f|_H}=\text{span}_{\mathbb{C}\{f\}}\{dx\wedge dy\wedge dz, xdx\wedge dy\wedge dz\}.\]
The spectrum is:
\[\alpha(m)=\{\frac{1}{2},1\}\]
and the relative monodromy matrix $T_{f,H}$ is semisimple with eigenvalues 
\[\lambda(m)=e^{-2\pi i \alpha(m)}=\{-1,1\}.\] 
Indeed, an easy calculation shows (by the quasihomogeneity of $(f,H)$) that the relative Gauss-Manin connection matrix is obtained by the system:
\[fD_{f,H}[\begin{array}{c}
		dx\wedge dy\wedge dz \\
		xdx\wedge dy\wedge dz
	     \end{array}]=[\begin{array}{cc}
			   \frac{1}{2} & 0 \\
			   0 & 1 \end{array}][\begin{array}{c}
				dx\wedge dy \wedge dz \\
				xdx\wedge dy\wedge dz 
				\end{array}],\]
and thus its monodromy is given by the matrix:
\[T_{f,H}= [ \begin{array}{cc}
-1 & 0 \\
0 & 1
\end{array}]. \]
\end{exmp}

\section{Boundary Singularities in Isochore Geometry}

We give here some more applications of the results obtained so far in isochore deformation theory, i.e. the deformation theory of boundary singularities with respect to a volume form. 

\subsection{Local Classification of Volume Forms and Functional Invariants}

We start first with a direct corollary of the finiteness and freeness of the relative Brieskorn module $H''_{f,H}$ concerning the classification of volume forms relative to diffeomorphisms tangent to the identity and preserving the boundary singularity $(f,H)$. Write $\mathcal{R}_{f,H}$ for the group of germs of these diffeomorphisms, i.e. such that:
\[\Phi^*f=f, \hspace{0.3cm} \Phi(H)=H,\]
\[\Phi(0)=0, \hspace{0.3cm} \Phi_*(0)=Id.\]
Two germs of volume forms at the origin will be called $\mathcal{R}_{f,H}$-equivalent (or equivalent for brevity) if they belong in the same orbit under the action of $\mathcal{R}_{f,H}$ in the space of germs of volume forms $\Omega^{n+1}_*$. The following theorem is a relative analog of a theorem obtained by J. -P. Fran\c{c}oise \cite{F0}, \cite{F1} (see also \cite{F2}) for the ordinary singularities, concerning the local normal forms of volume forms and their functional invariants:
\begin{thm}
\label{t5}
Two germs of volume forms are equivalent if and only if they define the same class in the relative Brieskorn module $H''_{f,H}$. In particular any germ of a volume form is equivalent to the form
\begin{equation}
\label{nf}
\omega=\sum_{i=1}^{\mu_{f,H}}c_i(f)\omega_i,
\end{equation}
where $c_i \in \mathbb{C}\{t\}$ and the classes of the forms $\omega_i$ form a basis of $H''_{f,H}$. 
\end{thm}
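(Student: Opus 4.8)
The statement has three parts --- an ``only if'', an ``if'', and the normal form --- and the normal form follows at once from the ``if'' part together with the freeness of $H''_{f,H}$ (Theorem \ref{t3}), so the work is in the two implications, the substantive one being ``if'', which I would attack by a Moser--type homotopy adapted to the group $\mathcal{R}_{f,H}$. The first step is an infinitesimal dictionary. The Lie algebra $\mathfrak{r}_{f,H}$ of $\mathcal{R}_{f,H}$ consists of germs of vector fields $v$ with $v(f)=0$, tangent to $H=\{x=0\}$, and vanishing at the origin. For such $v$ and a volume form $\omega$, Cartan's formula and $d\omega=0$ give $L_v\omega=d(i_v\omega)$; moreover $i_v\omega\in\Omega^{n}(H)$ (since $v$ is tangent to $H$ and $\omega$ has top degree) and $df\wedge i_v\omega=0$ (since $df\wedge\omega=0$ and $v(f)=0$), so by the relative de Rham division Lemma \ref{rdrd} one has $i_v\omega=df\wedge\alpha$ with $\alpha\in\Omega^{n-1}(H)$, hence $L_v\omega=-df\wedge d\alpha$ lies in the submodule $df\wedge d\Omega^{n-1}(H)$ of relations defining $H''_{f,H}$. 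Conversely, for $\beta\in\Omega^{n-1}(H)$ and a volume form $\omega$, the unique vector field $v$ with $i_v\omega=df\wedge\beta$ (contraction with $\omega$ being an isomorphism onto $\Omega^{n}$) satisfies $v(f)\,\omega=i_v(df\wedge\omega)+df\wedge i_v\omega=0$, is tangent to $H$ because $i_v\omega=df\wedge\beta$ restricts to zero on $H$, and vanishes at $0$ because $df\wedge\beta$ does --- a short computation from $\Omega^{n-1}(H)=x\Omega^{n-1}+dx\wedge\Omega^{n-2}$, using that whichever of $f$, $f|_H$ has the isolated singularity, $df\wedge dx$ vanishes at the origin; the same computation shows $df\wedge d\beta$ vanishes at the origin.

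For the ``only if'' direction, let $\Phi\in\mathcal{R}_{f,H}$ and $\omega$ a volume form. Being tangent to the identity, $\Phi$ restricts for small $t$ to a self-diffeomorphism of the pair of Milnor fibers $(X_t,X'_t)$ homotopic to the identity, hence acting trivially on $H_n(X_t,X'_t;\mathbb{C})$; since $\Phi^*f=f$ (so $\Phi^*df=df$ and $(\Phi^*\omega)/df=\Phi^*(\omega/df)$), this gives the period identity $\int_{\gamma(t)}(\Phi^*\omega)/df=\int_{\gamma(t)}\omega/df$ for every locally constant section $\gamma(t)$. By the injectivity of the period map on the localized Brieskorn module $\mathcal{M}_{f,H}$ --- a consequence of the relative de Rham isomorphism (\ref{dr0}) and the regularity Theorem \ref{t2} (the fundamental matrix of the relative Picard--Fuchs system is invertible) --- together with the torsion--freeness of $H''_{f,H}$ (Theorem \ref{t3}), this forces $\Phi^*\omega$ and $\omega$ to define the same class in $H''_{f,H}$.

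For the ``if'' direction, suppose $\omega_0,\omega_1$ are volume forms with $\omega_1-\omega_0=df\wedge d\beta$, $\beta\in\Omega^{n-1}(H)$. Since $df\wedge d\beta$ vanishes at the origin, the path $\omega_t=\omega_0+t(\omega_1-\omega_0)$, $t\in[0,1]$, consists of volume forms near $0$. By the converse part of the infinitesimal dictionary applied to $\omega_t$, there is for each $t$ a vector field $v_t\in\mathfrak{r}_{f,H}$, holomorphic in $(t,z)$, with $i_{v_t}\omega_t=df\wedge\beta$ and hence $L_{v_t}\omega_t=-df\wedge d\beta=-\dot\omega_t$. Integrating $\dot\Phi_t=v_t\circ\Phi_t$, $\Phi_0=\mathrm{id}$, gives $\tfrac{d}{dt}(\Phi_t^*\omega_t)=\Phi_t^*(L_{v_t}\omega_t+\dot\omega_t)=0$, so $\Phi:=\Phi_1$ satisfies $\Phi^*\omega_1=\omega_0$ and, the $v_t$ preserving $f$, being tangent to $H$, and vanishing at $0$, the germ $\Phi$ fixes $0$ and preserves $f$ and $H$. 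Finally, given an arbitrary volume form $\omega'$ and a basis $\{\omega_i\}$ of the free $\mathbb{C}\{f\}$--module $H''_{f,H}$ (Theorem \ref{t3}), write $[\omega']=\sum_i c_i(f)[\omega_i]$ with $c_i\in\mathbb{C}\{t\}$; then $\omega'$ and $\sum_i c_i(f)\omega_i$ have the same class, hence are $\mathcal{R}_{f,H}$--equivalent by the ``if'' direction, which is the normal form (\ref{nf}).

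I expect the main obstacle to be not the flow itself --- the ODE has holomorphic coefficients and hence a holomorphic solution --- but ensuring that $\Phi$ is in fact tangent to the identity, so that $\Phi\in\mathcal{R}_{f,H}$ in the strict sense of the definition: the linear part $D\Phi_1(0)$ need not be the identity, and must be corrected by replacing $v_t$ with $v_t+w_t$, where $w_t$ is an infinitesimal symmetry of $(\omega_t,f,H)$ (that is, $\mathrm{div}_{\omega_t}w_t=0$, $w_t(f)=0$, $w_t$ tangent to $H$) with prescribed linear part at the origin; the existence of such $w_t$ is the main technical lemma, proved as in Fran\c{c}oise \cite{F1} using the finiteness of $\mu_{f,H}$ and Lemma \ref{rdrd} once more. (The closedness of the finitely generated submodule $df\wedge d\Omega^{n-1}(H)\subset\Omega^{n+1}$ would instead be the key point if one proved ``only if'' by integrating $\tfrac{d}{ds}\Phi_s^*\omega$ along a path inside $\mathcal{R}_{f,H}$, an argument the period computation above sidesteps.)
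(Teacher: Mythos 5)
Your proof follows essentially the same route as the paper: the ``if'' direction is the same Moser homotopy, solving $i_{v_t}\omega_t=df\wedge\beta$ so that $L_{v_t}\omega_t=-df\wedge d\beta$, and the ``only if'' direction rests on the same fact (a diffeomorphism tangent to the identity acts trivially on the relative cohomology of the Milnor fibers, hence, by coherence and freeness, on $H''_{f,H}$), which you merely rephrase through period integrals and the injectivity of the period map. The one point where you go beyond the paper is the observation that the time-one map of $v_t$ need not have identity linear part at the origin --- a detail the paper's proof also leaves unaddressed --- and your proposed correction by composing with an infinitesimal symmetry with prescribed linear part is the right idea, though you leave it, as you acknowledge, at the level of a sketch.
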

\begin{proof}
The one direction is trivial: if two germs of volume forms are equivalent then their Poincar\'e residues define the same cohomology class in each fiber $H^n(X_t,X'_t;\mathbb{C})$ of the cohomological Milnor fibration in a sufficiently small neighborhood of the origin. Indeed, since the diffeomorphism realising the equivalence is tangent to the identity, it induces the identity in the cohomology of each pair of fibers $(X_t,X'_t)$ with constant coefficients. It follows by the coherence and freeness of the Brieskorn module $H''_{f,H}$ that the diffeomorphism $\Phi$ induces the identity morphisms in both $H'_{f,H}$ and $H''_{f,H}$. The other direction is a trivial application of Moser's homotopy method, whose proof goes briefly as follows: consider a family of volume forms $\omega_s=\omega_0+sdf\wedge dg$, $s\in [0,1]$. Then the vector field $v_s$ defined by:
\[v_s\lrcorner \omega_s=g\wedge df\]
is a solution of the homological equation:
\[L_{v_s}\omega_s=-df\wedge dg\]
and thus, its time-1 map $\Phi_1$ is the desired diffeomorphism between $\omega_1$ and $\omega_0$. Choosing now a basis $\{\omega_1,...,\omega_{\mu_{f,H}}\}$ of $H''_{f,H}$ and $\omega_0$ as the representative of $\omega_1$ in this basis, then we obtain the normal form (\ref{nf}).
\end{proof}
\begin{rem}
Since the boundary singularity $(f,H)$ is isolated, we may always choose local coordinates $(x,y_1,...,y_n)$ such that in the theorem above $H=\{x=0\}$ and $f(x,y_1,...,y_n)$ is a polynomial of sufficiently high degree (by a relative analog of the determinacy theorem c.f. \cite{Mat}).  
\end{rem}

The case $\mu_{f,H}=\mu_{f|_H}=1$ i.e. the first occurring boundary singularity ($A_1$ in Arnol'd's list \cite{A1}), with normal form $f(x,y)=x+y_1^2+...+y_n^n$, $H=\{x=0\}$, is of special interest. The following theorem is a direct corollary of the above theorem and it may be interpreted as the relative analog of J. Vey's isochore Morse lemma \cite{V}. For its proof we follow \cite{F1} (for another proof see next section).

\begin{thm}
\label{t6}
Let $(f,H)$ be a boundary singularity such that the origin is a regular point for $f$ but nondegenerate critical point for the restriction $f|_H$ on the boundary. Then there exists a diffeomorphism $\Psi$, preserving both the boundary $H=\{x=0\}$ and the standard volume form $\omega=dx\wedge dy_1\wedge ...\wedge dy_n$, as well as a unique function $\psi \in \mathbb{C}\{t\}$, $\psi(0)=0$, $\psi'(0)=1$ such that
\begin{equation}
\label{e2}
\Psi^*f=\psi(x+y_1^2+...+y_n^2),
\end{equation} 
\end{thm}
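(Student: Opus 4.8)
The plan is to follow the strategy of Fran\c{c}oise \cite{F1}, the essential point being that the relative Brieskorn module of this singularity has rank one. Since $f$ is regular at the origin while $f|_H$ has a nondegenerate critical point there, a boundary analogue of the Morse lemma (Arnol'd's $A_1$ normal form, \cite{A1}) first supplies a diffeomorphism $\Phi_0$ with $\Phi_0(H)=H$ and $\Phi_0^*f=f_0:=x+y_1^2+\cdots+y_n^2$; under it the standard volume form becomes $\Phi_0^*\omega=U\omega$ for a unit $U$. Thus it suffices to treat the case $f=f_0$ and to classify the volume form $U\omega$ for the \emph{fixed} boundary singularity $(f_0,H)$, now also allowing a reparametrisation $f_0\mapsto\psi(f_0)$ of the base.

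Next I would invoke Theorem \ref{t5}. The boundary singularity $(f_0,H)$ has $\mu_{f_0,H}=\mu_{f_0}+\mu_{f_0|_H}=0+1=1$, and since $f_0$ is quasihomogeneous with Jacobian ideal $(x,y_1,\dots,y_n)$, the relative Brieskorn module $H''_{f_0,H}$ is free of rank one over $\mathbb{C}\{f_0\}$ (Theorem \ref{t3}), generated by the class of $\omega$ (cf.\ Example \ref{ex}). Hence by Theorem \ref{t5} there is $\Theta\in\mathcal{R}_{f_0,H}$ with $\Theta^*(U\omega)=c(f_0)\,\omega$ for a unique $c\in\mathbb{C}\{t\}$, $c(0)\neq 0$. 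Replacing $\Phi_0$ by $\Phi_0\circ\Theta$, we are reduced to the pair $(f_0,c(f_0)\,\omega)$, and it remains to produce a diffeomorphism $\Xi$ with $\Xi(H)=H$, $\Xi^*f_0=\psi(f_0)$ for some $\psi\in\mathbb{C}\{t\}$ with $\psi(0)=0$, and $\Xi^*(c(f_0)\,\omega)=\omega$; composing $\Xi$ with the earlier maps produces the required $\Psi$.

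For this last step the relevant functional invariant is the Gelfand--Leray period $I_0(t)=\int_{\gamma(t)}\omega/df_0$ along the relative vanishing half-cycle $\gamma(t)\in H_n(X_t,X'_t;\mathbb{C})$; by the quasihomogeneity and the semisimplicity of the relative monodromy (Example \ref{ex}) it equals $\kappa\,t^{n/2}(1+O(t))$ with $\kappa\neq 0$, while the period of $c(f_0)\,\omega$ is $c(t)I_0(t)$. I would run a Moser homotopy: interpolate $c_s$ from $c_0\equiv 1$ to $c_1=c$ together with reparametrisations $\psi_s$ (with $\psi_0=\mathrm{id}$), and seek $\Xi_s$ ($\Xi_0=\mathrm{id}$) with $\Xi_s^*f_0=\psi_s(f_0)$ and $\Xi_s^*(c_s(f_0)\,\omega)=\omega$. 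The generating field $v_s$ must then satisfy $v_s(f_0)=g_s(f_0)$ together with the homological equation $d\bigl(v_s\lrcorner c_s(f_0)\omega\bigr)=-\dot c_s(f_0)\,\omega$, with $v_s$ tangent to $H$; this last equation is solvable within the complex of forms vanishing on $H$ (using the relative de Rham division Lemma \ref{rdrd} and the freeness of $H''_{f_0,H}$, exactly as in the proof of Theorem \ref{t5}) precisely when the period obstruction $\int_{\gamma(t)}(\,\cdot\,)/df_0$ vanishes, and this compatibility condition is a first-order identity determining $g_s$, hence $\psi=\psi_1$, in terms of $c$ and $I_0$; the normalisation $\psi'(0)=1$ is read off from the leading coefficients. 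Uniqueness of $\psi$ follows by the same period comparison: two solutions $\Psi_1,\Psi_2$ differ by a diffeomorphism preserving $H$ and $\omega$ that carries $\psi_1(f_0)$ to $\psi_2(f_0)$, so their Gelfand--Leray periods coincide and hence $\psi_1=\psi_2$. Alternatively one may write $\Xi$ down explicitly as a map fibered over the $f_0$-axis adapted to the splitting $f_0=x+q(y)$, $q=\sum_{i=1}^n y_i^2$.

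The main obstacle is exactly this last step: passing from the period-level (cohomological) matching to a genuine diffeomorphism. The requirement $\Xi(H)=H$ couples the $x$- and $y$-directions, since a reparametrisation of $f_0=x+q(y)$ acting only on $x$ cannot modify the restriction $q$ to the boundary, so $\Xi$ must also rescale the $y$-variables; the delicate point is that, having done so, the Jacobian of $\Xi$ must turn out to be a function of $f_0$ alone, which is precisely what the period obstruction controls and what forces the choice of $\psi$.
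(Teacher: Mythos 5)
Your reduction coincides with the paper's up to the halfway point: Theorem \ref{t5} (preceded by the boundary Morse lemma for $A_1$) brings the data to the pair $(f_0,\,c(f_0)\,dx\wedge dy^n)$ with $f_0=x+y_1^2+\cdots+y_n^2$ and $c(0)=1$, using that $H''_{f_0,H}$ has rank $\mu_{f_0,H}=1$ and is generated by $dx\wedge dy^n$. The divergence, and the gap, is in the final step, which you yourself flag as ``the main obstacle'' without resolving it. Your Moser homotopy requires, for each $s$, a vector field $v_s$ tangent to $H$ satisfying \emph{simultaneously} $L_{v_s}f_0=g_s(f_0)$ and the homological equation $d\bigl(v_s\lrcorner\, c_s(f_0)\omega\bigr)=-\dot c_s(f_0)\,\omega$. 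You assert that the period obstruction can be killed by the right choice of the single function $g_s$, but that is exactly the point needing proof: one must show that this one free function of one variable suffices to cancel the obstruction class in $H''_{f_0,H}$ while keeping $v_s$ tangent to $H$, and that the resulting field integrates. As written this is a description of what must be true, not an argument; making it rigorous essentially amounts to invoking the isochore versal deformation theorem (Theorem \ref{t7} / Proposition \ref{pr1}), which is precisely the paper's \emph{second} proof of this lemma in the next section, and which you do not invoke.

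The paper's own proof of this last step is elementary and completely explicit: set $x'=xv(f_0)$, $y_i'=y_i\sqrt{v(f_0)}$ with $v(0)=1$. This preserves $H$, sends $f_0$ to $\psi(f_0)$ with $\psi(t)=tv(t)$, and its Jacobian is a function of $f_0$ alone \emph{by construction}; requiring it to equal $c(f_0)$ yields the linear ODE $\tfrac{2}{n+2}tw'+w=c$, $w(0)=1$, for $w=v^{(n+2)/2}$, solved by $w(t)=t^{-(n+2)/2}\int_0^t\tfrac{n+2}{2}s^{n/2}c(s)\,ds$; uniqueness of $\psi=\bigl(\int_0^t\tfrac{n+2}{2}s^{n/2}c(s)\,ds\bigr)^{2/(n+2)}$ is read off the formula. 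Your closing remark that one may ``alternatively write $\Xi$ down explicitly as a map fibered over the $f_0$-axis'' gestures at exactly this construction, but the actual content --- that the Jacobian of such a rescaling depends only on $f_0$ and that the resulting ODE has an analytic solution with the right normalisation --- is the whole theorem, and it is absent from your proposal.
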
 
\begin{proof}
By Theorem \ref{t5} above we may choose a coordinate system $(x,y_1,...,y_n)$ such that $H=\{x=0\}$, $f(x,y)=x+y_1^2+...+y_n^2$ and $\omega=c(f)dx\wedge dy_1\wedge ...\wedge dy_n$, where $c\in \mathbb{C}\{t\}$ is a function, nonvanishing at the origin, $c(0)=1$. We will show that there exists a change of coordinates $\Psi(x,y_1,...,y_n)=(x',y_1',...,y_n')$ such that the pair $(f,H)$ goes to $(\psi(f), H)$ for some function $\psi$ and $\omega$ is reduced to normal form $dx\wedge dy_1\wedge ...\wedge dy_n$. To do this, we set $x'=xv(f)$, $y_i'=y_i\sqrt{v(f)}$, where $v\in \mathbb{C}\{t\}$ is some function with $v(0)=1$ (so $\Psi$ is indeed a boundary-preserving diffeomorphism tangent to the identity). With any such function $v$ we have $\Phi^*f=\psi(f)$, for some function $\psi(t)=tv(t)$ with $\psi(0)=0$ and $\psi'(0)=1$. Now it suffices to choose $v$ so that $\Phi_*$ has determinant equal to $c(f)$ , i.e. such that the following initial value problem is satisfied for the function $w=v^{\frac{n+2}{2}}$:
\begin{equation}
\label{a}
\frac{2}{n+2}tw'(t)+w(t)=c(t), \quad w(0)=1.
\end{equation}
As is easily verified this admits an analytic solution given by the formula:
\[w(t)=t^{-\frac{n+2}{2}}\int_0^t\frac{n+2}{2}s^{\frac{n}{2}}c(s)ds.\]
This also shows the uniqueness of the function $\psi(t)$, which can be written as:
\[\psi(t)=(\int_{0}^t\frac{n+2}{2}s^{\frac{n}{2}}c(s)ds)^{\frac{2}{n+2}}.\]
\end{proof}
 
\subsection{Isochore Versal Deformations of Boundary Singularities}

In \cite{G1}, M. D. Garay gave a different proof of Vey's isochore Morse lemma which, according to his results, is a simple consequence of an isochore version of Mather's versal unfolding theorem proved by him (as a positive answer to a question asked by Y. Colin de Verdi\`ere in \cite{C}). Here we will present the main parts of the proof of a relative version of the isochore unfolding theorem, i.e. for the isochore unfoldings of boundary singularities, by considering only the main modifications needed in order to adapt the same proof as in \cite{G1}.

To start recall that a deformation $F:(\mathbb{C}^{n+1} \times \mathbb{C}^k,0)\rightarrow (\mathbb{C},0)$ of a boundary singularity $(f,H)$ is just a deformation of $f$, $F(.;0)=f$, such that its restriction  $F|_H:(H\times \mathbb{C}^k,0)\rightarrow (\mathbb{C},0)$ on the boundary $H=\mathbb{C}^n \subset \mathbb{C}^{n+1}$, is a deformation of $f|_H$, $F|_H(.;0)=f|_H$.  To the deformation $F$ of the boundary singularity we associate its unfolding, i.e. the map:
\[\tilde{F}: (\mathbb{C}^{n+1}\times \mathbb{C}^k,0)\rightarrow (\mathbb{C}\times \mathbb{C}^k,0), \hspace{0.3cm} \tilde{F}(.;\lambda)=(F(.;\lambda),\lambda)\]
and accordingly we define also $\tilde{F}|_{H}$. Fix now the equation of the boundary $H=\{x=0\}$ and fix also a germ of a volume form $\omega=dx\wedge dy^n$ (where $dy^n=dy_1\wedge ...\wedge dy_n$) at the origin of $\mathbb{C}^{n+1}$. All the notions of Right-Left (or $\mathcal{A}$-)equivalence between deformations, versality, infinitesimal versality e.t.c. (c.f. \cite{A0}) carry over to the subgroup $\mathcal{A}_{\omega,H}$ of Right-Left equivalences, where the right diffeomorphism has to preserve both the boundary $H$ and the volume form $\omega$. In particular, a deformation $F$ (or the unfolding $\tilde{F}$) of a boundary singularity $(f,H)$ will be called isochore versal if any other deformation $F'$ (or unfolding $\tilde{F}'$ respectively) is $\mathcal{A}_{\omega,H}$-equivalent to a deformation induced from $F$, i.e. there exists a relative diffeomorphism $\phi:(\mathbb{C}^{n+1}\times \mathbb{C}^{k'},0)\rightarrow (\mathbb{C}^{n+1},0)$, $\phi(.;0)=.$, preserving both $H$ and $\omega$, a relative diffeomorphism $\psi : (\mathbb{C}\times \mathbb{C}^k,0)\rightarrow (\mathbb{C},0)$, $\psi(.;0)=.$ and a map germ $g:(\mathbb{C}^{k'},0)\rightarrow (\mathbb{C}^{k},0)$ such that:  
\[\psi(F(\phi(x,y;\lambda');g(\lambda'))=F'(x,y;\mathbb{\lambda'}).\]

Let us consider now the corresponding infinitesimal isochore deformations. The space of non-trivial isochore deformations of the germ $(f,H)$ is, as is easily seen, the space: 
\[\tilde{I}^1_{f,H}=\frac{\mathcal{O}_{n+1}}{\{L_vf+k(f)/L_v\omega=0, \hspace{0.2cm} v|_{H}\in TH\} }.\]
This is a $\mathbb{C}\{f\}$-module which can be viewed as the quotient of the ``isochore Jacobian module'' of the boundary singularity $(f,H)$\footnote{in analogy with the isochore Jacobian module of an ordinary singularity \cite{G1}, it is the space of non-trivial infinitesimal deformations with respect to (right) $\mathcal{R}_{\omega,H}$-equivalence.}:
\[I^1_{f,H}=\frac{\mathcal{O}_{n+1}}{\{L_vf/L_v\omega=0, \hspace{0.2cm} v|_{H}\in TH\} }\]
by the submodule generated by the class of the constant function $1$. The latter module is in turn isomorphic to the relative Brieskorn module $H''_{f,H}$ of the boundary singularity, the isomorphism given by multiplication with the volume form $\omega$, and consequently it is free of rank $\mu_{f,H}$. Thus, a necessary condition for a deformation $F$ of $(f,H)$ to be isochore versal is that the classes of the velocities $\partial_{\lambda_i} F:=\frac{\partial F}{\partial \lambda_i}|_{\lambda=0}$ along with the class of $1$, span the isochore Jacobian module $I^1_{f,H}$ over $\mathbb{C}\{f\}$. The following theorem is an analog of the Garay-Mather theorem \cite{G2} and says that this condition is also sufficient:
\begin{thm}
\label{t7}
A deformation $F:(\mathbb{C}^{n+1}\times \mathbb{C}^k,0)\rightarrow (\mathbb{C},0)$ of a boundary singularity $(f,H)$ is isochore versal if it is infinitesimally isochore versal, i.e.
\begin{equation}
\label{iv}
I^1_{f,H}=\text{span}_{\mathbb{C}\{f\}}\{1,\partial_{\lambda_1}F,...,\partial_{\lambda_k}F\}\Leftrightarrow H''_{f,H}=\text{span}_{\mathbb{C}\{f\}}\{\omega,\partial_{\lambda_1}F\omega,...,\partial_{\lambda_k}F\omega\}
\end{equation}
\end{thm}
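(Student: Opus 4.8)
The plan is to adapt Garay's proof \cite{G1} of the isochore unfolding theorem to the boundary setting, following Mather's scheme: reduce ``isochore versality'' to ``infinitesimal isochore versality'' by a homotopy argument combined with a preparation theorem, taking care that every diffeomorphism produced along the way preserves simultaneously the boundary $H=\{x=0\}$ and the volume form $\omega=dx\wedge dy^n$.

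First I would perform the classical reduction. By the relative determinacy theorem (c.f. \cite{Mat}) we may assume $f$, and after a finite truncation also $F$, is polynomial. Given an arbitrary deformation $F'$ of $(f,H)$ over a base $(\mathbb{C}^{k'},0)$, form its superposition with $F$, namely $G(x,y;\lambda,\mu)=F(x,y;\lambda)+F'(x,y;\mu)-f(x,y)$, a deformation of $(f,H)$ over $(\mathbb{C}^k\times\mathbb{C}^{k'},0)$ with $G(\cdot;\cdot,0)=F$. It then suffices to prove that, whenever $F$ is infinitesimally isochore versal, such a $G$ is $\mathcal{A}_{\omega,H}$-equivalent to the trivial extension $(x,y,\lambda,\mu)\mapsto F(x,y;\lambda)$; restricting this equivalence to $\lambda=0$ exhibits $F'$ as induced from $F$, which is precisely the versality asserted. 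By induction on the number of extra parameters we may assume there is a single one, $s$.

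The core is then the homotopy method in $s$, in the spirit of the proof of Theorem \ref{t5} but now with a moving base: one seeks a family of germs of vector fields $v_s$ on $(\mathbb{C}^{n+1},0)$ with $v_s|_H\in TH$, a family of vector fields $u_s$ on the parameter space, and a family of functions $w_s$ of the target coordinate, the parameters and $s$, solving the homological equation
\[\frac{\partial G}{\partial s}\;=\;v_s(G)\;+\;u_s\!\cdot\!\partial_\lambda G\;+\;w_s(G,\lambda,s)\]
subject to the isochore constraint $L_{v_s}\omega=(\text{function of }G)\cdot\omega$; the time-$1$ flows of $v_s$ (preserving $H$ and $\omega$), of $u_s$, and of $w_s$ then assemble into the required $\phi$, $g$, $\psi$. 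Solvability of this equation for each fixed $s$ is exactly the hypothesis: modulo the trivial infinitesimal deformations (those of the form $v(G)$ with $v|_H\in TH$ and $L_v\omega$ proportional to $\omega$) the velocity $\partial_s G$ lies in $I^1_{f,H}=\mathrm{span}_{\mathbb{C}\{f\}}\{1,\partial_{\lambda_1}F,\dots,\partial_{\lambda_k}F\}$, and the isomorphism $I^1_{f,H}\cong H''_{f,H}$ with $H''_{f,H}$ free of rank $\mu_{f,H}$ (Theorem \ref{t3}) lets us read off the coefficients.

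The main obstacle is upgrading this pointwise-in-$s$ solvability to a solution $(v_s,u_s,w_s)$ depending analytically on $s$ (and on the parameters): this needs a relative, parametrized version of the Malgrange preparation theorem, because $I^1_{f,H}$ is finitely generated over the small ring $\mathbb{C}\{f\}$ and not over $\mathcal{O}_{n+1}$, and the division must respect the boundary constraint $v_s|_H\in TH$. I would handle this as in \cite{G1}, applying preparation simultaneously to the ambient germ and to its restriction to $H$, and using the split short exact sequence $0\to dH'_{f|_H}\xrightarrow{df\wedge}H''_{f,H}\to H''_f\to 0$ established above to keep track of the ``boundary half'' of every object; the splitting is what makes the relative case no harder than the ordinary one once the ordinary case is in hand. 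Once the homological equation is solved analytically in all variables, integration of $v_s,u_s,w_s$ over $s\in[0,1]$ produces the relative diffeomorphisms $\phi,\psi$ and the base change $g$, completing the proof.
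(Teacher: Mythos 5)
Your architecture is the same as the paper's: Martinet's superposition trick plus induction reduces versality to the isochore triviality of a one-parameter deformation $G$ of $F$, the homotopy method converts that triviality into a homological equation for $\partial_s G$, and infinitesimal versality enters through a Nakayama-type argument. The genuine gap is precisely at the step you flag as the main obstacle. The paper does not resolve it with any ``relative, parametrized Malgrange preparation theorem'' applied to $I^1_{f,H}$ over $\mathbb{C}\{f\}$ -- indeed no such off-the-shelf statement is available, and preparation in Mather's sense would give finiteness over the source variables, not over the target-plus-parameter ring that is actually needed. What plays the role of preparation is the \emph{parametric relative Brieskorn module} $H''_{G,H}=\Omega^{n+1+k+1}/\bigl(d\lambda^k\wedge dt\wedge dG\wedge d\Omega^{n-1}(H)\bigr)$, together with Proposition \ref{pr}: it is finitely generated over $\mathbb{C}\{G,\lambda,t\}$ of rank $\mu_{f,H}$, with $H''_{G,H}/\mathfrak{m}H''_{G,H}\cong H''_{f,H}/fH''_{f,H}$. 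This finiteness is a coherence statement proved by the relative Kiehl--Verdier theorem (\cite{G0}), not by division. Once it is in hand, Nakayama's lemma applied to the submodule $M$ generated by $\tilde\omega$ and the $\partial_{\lambda_i}G\,\tilde\omega$ shows $M=H''_{G,H}$ exactly when the hypothesis (\ref{iv}) holds, and membership of $\partial_sG\,\tilde\omega$ in $M$ \emph{is} the solvability of the homological equation with coefficients analytic in $(G,\lambda,s)$. Your proposal never constructs this parametric module, so the passage from pointwise solvability to analytic dependence is not actually carried out.

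A secondary weakness: you propose to control the boundary constraint by ``applying preparation simultaneously to the ambient germ and to its restriction to $H$'' and by the splitting $0\to dH'_{f|_H}\to H''_{f,H}\to H''_f\to 0$. That splitting is established in the paper only for the non-parametric module (and as a splitting of $\mathbb{C}\{f\}$-modules, using freeness of $H''_f$); to run your reduction you would need a parametric version of it over $\mathbb{C}\{G,\lambda,t\}$, which is neither proved nor needed once the finiteness of $H''_{G,H}$ is available, since the whole relative module is handled in one stroke. A minor point of bookkeeping: in the paper's homological equation (\ref{he}) the vector field $v$ strictly preserves $\omega$ and the left reparametrization is carried entirely by the term $k(G,\lambda,t)$; your constraint $L_{v_s}\omega=(\text{function of }G)\cdot\omega$ together with a separate $w_s$ term double-counts the same freedom -- harmless, but it should be normalized before multiplying the equation by $\tilde\omega$ and reading it in $H''_{G,H}$.
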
 
Following \cite{G1} we may prove this theorem as follows: first we show that any 1-parameter deformation $G$ of an infinitesimally versal deformation $F$ is isochore trivial (we call $F$ isochore rigid in analogy with the ordinary case). Then we conclude by using J. Martinet's trick, according to which any $k$-parameter deformation can be considered as a ``sum'' of 1-parameter deformations.  The isochore rigidity in turn can be interpreted cohomologically in terms of a parametric version of the relative Brieskorn module which we present below.

\subsubsection{The Parametric Relative Brieskorn Module and Isochore Rigidity}

Let $\Omega^{\bullet}_{n+1+k}$ denote the complex of germs of holomorphic forms at the origin of $\mathbb{C}^{n+1}\times \mathbb{C}^k$ and let $\Omega^{\bullet}_{n+1+k}(H)$ denote the subcomplex of forms vansihing on $H$. In a coordinate system $(x,y_1,...,y_n;\lambda_1,...,\lambda_k)$ for which $H=\{x=0\}$ we have explicitly $\Omega^{\bullet}_{n+1+k}(H)=x\Omega^{\bullet}_{n+1+k}+dx\wedge \Omega^{\bullet-1}_{n+1+k}$. In analogy with the case of the germ $(f,H)$ we may define a relative de Rham cohomology for the map $\tilde{F}$ (and for the map $\tilde{F}|_H$) as well as the corresponding Brieskorn modules. Here we will only need to consider the parametric version of the relative Brieskorn module $H''_{f,H}$, i.e the $\mathbb{C}\{F,\lambda\}$-module:
\[H''_{F,H}:=\frac{\Omega^{n+1+k}_{n+1+k}}{d\lambda_1\wedge ...\wedge d\lambda_k\wedge dF\wedge d\Omega^{n-1}_{n+1+k}(H)},\]
which plays a crucial role in the proof of the isochore unfolding Theorem \ref{t6}. In the ordinary case \cite{G1}, the finiteness (and freeness) of the parametric Brieskorn module follows from the results of G. M. Greuel \cite{Gr} on the isolated complete intersection singularities. For the boundary case we will only need the following relative part:
\begin{prop}
\label{pr}
The parametric Brieskorn module $H''_{F,H}$ of a deformation $F$ of a boundary singularity $(f,H)$ is finitely generated over $\mathbb{C}\{F,\lambda\}$ and it is of rank $\mu_{f,H}$. Moreover, its restriction on $\mathbb{C}^{n+1}=\{\lambda_1=0,...,\lambda_k=0\}$ is isomorphic to the Brieskorn module $H''_{f,H}$ of $(f,H)$. 
\end{prop}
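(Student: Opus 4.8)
The plan is to obtain Proposition~\ref{pr} by the same device that produced the non-parametric extension $0\to H''_{f|_H}\to H''_{f,H}\to H''_f\to 0$ at the end of Section~2.3: reduce the statement to the two ordinary unfoldings $\tilde F$ and $\tilde F|_H$, for which the required finiteness is a theorem about isolated complete intersection singularities. First I would record the ordinary input. Since $(f,H)$ is an isolated boundary singularity, both $\tilde F:(\mathbb{C}^{n+1}\times\mathbb{C}^k,0)\to(\mathbb{C}\times\mathbb{C}^k,0)$ and $\tilde F|_H:(\mathbb{C}^{n}\times\mathbb{C}^k,0)\to(\mathbb{C}\times\mathbb{C}^k,0)$ define isolated complete intersection singularities, so by Greuel's results \cite{Gr} as used by Garay \cite{G1} the associated parametric Brieskorn modules
\[H''_{F}=\frac{\Omega^{n+1+k}_{n+1+k}}{d\lambda_1\wedge\cdots\wedge d\lambda_k\wedge dF\wedge d\Omega^{n-1}_{n+1+k}},\qquad H''_{F|_H}=\frac{\Omega^{n+k}_{n+k}}{d\lambda_1\wedge\cdots\wedge d\lambda_k\wedge d(F|_H)\wedge d\Omega^{n-2}_{n+k}}\]
are finite and free over $\mathbb{C}\{F,\lambda\}$ of ranks $\mu_f$ and $\mu_{f|_H}$, and their restrictions to $\{\lambda=0\}$ are isomorphic to $H''_f$ and $H''_{f|_H}$ respectively.

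Next I would build the parametric analogue of the extension sequence. As in the proof of Lemma~\ref{rdrd}, the relative de Rham division lemma for $\tilde F$ (for forms vanishing on $H\times\mathbb{C}^k$) follows from the ordinary parametric de Rham division lemmas for $\tilde F$ and $\tilde F|_H$, i.e. from the exactness of the Koszul complexes $(d\lambda_1\wedge\cdots\wedge d\lambda_k\wedge dF\wedge-)$, which is guaranteed by the complete intersection condition. Using this, the inclusion $d\lambda_1\wedge\cdots\wedge d\lambda_k\wedge dF\wedge d\Omega^{n-1}_{n+1+k}(H)\subset d\lambda_1\wedge\cdots\wedge d\lambda_k\wedge dF\wedge d\Omega^{n-1}_{n+1+k}$ induces, word for word as in Section~2.3, a $\mathbb{C}\{F,\lambda\}$-linear surjection $\pi\colon H''_{F,H}\to H''_F$, whose kernel is identified (via $d\lambda_1\wedge\cdots\wedge d\lambda_k\wedge dF\wedge-$ and the parametric Poincar\'e residue, exactly as with the map $\delta''$ of Proposition~\ref{pro}) with a free $\mathbb{C}\{F,\lambda\}$-module isomorphic to $H''_{F|_H}$. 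Since $H''_F$ is free the resulting short exact sequence
\[0\to H''_{F|_H}\to H''_{F,H}\to H''_F\to 0\]
splits, so $H''_{F,H}$ is finite and free over $\mathbb{C}\{F,\lambda\}$ of rank $\mu_f+\mu_{f|_H}=\mu_{f,H}$. Tensoring this split sequence with $\mathbb{C}\{F,\lambda\}/(\lambda_1,\dots,\lambda_k)\cong\mathbb{C}\{f\}$ keeps it exact, its outer terms become $H''_{f|_H}$ and $H''_f$, and comparing it via the natural ``restrict forms to $\{\lambda=0\}$'' maps with the non-parametric split sequence $0\to H''_{f|_H}\to H''_{f,H}\to H''_f\to 0$ and applying the five lemma gives $H''_{F,H}\otimes_{\mathbb{C}\{F,\lambda\}}\mathbb{C}\{f\}\cong H''_{f,H}$. (Alternatively, this last isomorphism can be checked directly: a bookkeeping of the $d\lambda$-grading shows that the image of $d\lambda_1\wedge\cdots\wedge d\lambda_k\wedge dF\wedge d\Omega^{n-1}_{n+1+k}(H)$ under that map is precisely $df\wedge d\Omega^{n-1}_{n+1}(H)$.)

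The main obstacle is the construction of the parametric short exact sequence, and inside it the parametric relative de Rham division lemma together with the precise description of $\ker\pi$: this is the point at which Greuel's complete intersection machinery has to be transported to the boundary setting, in the same spirit in which the paper has consistently reduced boundary (relative) statements to the two ordinary pieces $f$ and $f|_H$. Once this sequence is available, the remaining steps --- the splitting, the rank count, and the restriction to $\{\lambda=0\}$ --- are formal and run parallel to the non-parametric argument of Section~2.3.
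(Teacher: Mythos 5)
Your argument is correct in outline but takes a genuinely different route from the paper. The paper's own proof is very short: finiteness of $H''_{F,H}$ is quoted from the relative Kiehl--Verdier/constructibility theorem of \cite{G0} (exactly as in Theorem \ref{t1}); the rank is then computed as the fibre dimension of this coherent module at a generic $(t,\lambda)$ off the discriminant of $\tilde F$, which a parametric de Rham theorem identifies with $\dim H^n(X_t,X'_t;\mathbb{C})=\mu_{f,H}$; and the restriction to $\{\lambda=0\}$ is read off directly from the definition --- your ``$d\lambda$-bookkeeping'' alternative is precisely that observation, and it is the cleaner way to get the last claim. You instead manufacture a parametric version of the splitting $0\to H''_{F|_H}\to H''_{F,H}\to H''_F\to 0$ from the end of Section 2.3 and import Greuel's ICIS results for the two ordinary unfoldings. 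This buys more than the proposition asserts (freeness of $H''_{F,H}$ over $\mathbb{C}\{F,\lambda\}$, plus the structural fact that the parametric module is again an extension of the two ordinary ones), at the cost of the step you flag yourself: the parametric relative division lemma and the identification of $\ker\pi$ with $H''_{F|_H}$. That step does close --- the Koszul complexes with differentials $d\lambda_1\wedge\cdots\wedge d\lambda_k\wedge dF\wedge$ and $d\lambda_1\wedge\cdots\wedge d\lambda_k\wedge d(F|_H)\wedge$ are exact in degrees $\le n$ and $\le n-1$ because the critical loci of $\tilde F$ and $\tilde F|_H$ are $k$-dimensional, and the nine-lemma argument of Lemma \ref{rdrd} transfers verbatim --- but you should be explicit that the \emph{freeness} (not merely coherence) of $H''_F$ and $H''_{F|_H}$ over the $(k+1)$-dimensional base is what you are taking from \cite{Gr}; it holds (via the Cohen--Macaulay property of these modules), but it is a strictly stronger input than the finiteness-plus-Nakayama that the paper and \cite{G1} actually use, and without it your splitting, and hence the exactness of the restriction to $\{\lambda=0\}$, would need a separate argument.
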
 
\begin{proof}
Since the singularities of $\tilde{F}$ are isolated, the proof of the finitness of the Brieskorn module $H''_{F,H}$ is again a straightforward corollary of the relative analog of the Kiehl-Verdier theorem (c.f. \cite{G0} and references therein). The rank of this module is then equal to the dimension of its fiber for any $(t,\lambda)$ sufficiently close to the origin and in the complement of the discriminant of $\tilde{F}$. By the same reasoning as in Section 2 (a parametric version of the de Rham theorem), this is exactly equal to the dimension of the relative cohomology $H^n(X_t,X'_t;\mathbb{C})$, i.e. equal to $\mu_{f,H}$. The fact the the restriction of $H''_{F,H}$ to $\{\lambda_1=0,...,\lambda_k=0\}$ is isomorphic to $H''_{f,H}$ is obvious from the definition.   
\end{proof}

Consider now a 1-parameter deformation $G_t$ of $F$:
\[G_t:=G:(\mathbb{C}^{n+1}\times \mathbb{C}^k\times \mathbb{C},0)\rightarrow (\mathbb{C},0), \hspace{0.3cm} (x,y;\lambda,t)\mapsto G(x,y;\lambda,t),\]
\[G(x,y;\lambda,0)=F(x,y;\mathbb{\lambda}).\] 
Then, as is easily seen, $G_t$ is isochore trivial provided that there exists a decomposition:
\begin{equation}
\label{he}
\partial_tG=k(G,\lambda, t)+\sum_{i=1}^kc_i(G,\lambda, t)\partial_{\lambda_i}G+L_vG,
\end{equation}
where $v$ is a relative vector field tangent to the boundary and preserving $\omega$. Multiplying with $\tilde{\omega}=\omega\wedge d\lambda^k\wedge dt$ (where we denote $d\lambda^k=d\lambda_1\wedge ...\wedge d\lambda_k$), the condition of isochore triviality above can be viewed as the condition that the class of the form $\partial_tG\tilde{\omega}$ in the Brieskorn module $H''_{G,H}$ of $G$ (of the unfolding $\tilde{G}$) belongs to the $\mathbb{C}\{G,\lambda,t\}$-module spanned by the classes of form $\tilde{\omega}$ and of the initial velocities  $\partial_{\lambda_i}G\tilde{\omega}$:
\[\partial_tG\tilde{\omega}\in M=\text{span}_{\mathbb{C}\{G,\lambda,t\}}\{\tilde{\omega},\partial_{\lambda_1}G\tilde{\omega},...,\partial_{\lambda_k}G\tilde{\omega}\}.\] 
We will show that if $F$ is infinitesimally isochore versal, then in fact $M=H''_{G,H}$, which implies in turn the existence of a solution of the homological equation (\ref{he}). To prove the assertion, notice that since the Brieskorn module $H''_{G,H}$ is finitely generated, by the above Proposition \ref{pr}, it suffices to show, by Nakayama's lemma, that the image of $M$ by the natural projection:
\[\pi:H''_{G,H}\rightarrow \frac{H''_{G,H}}{\frak{m}H''_{G,H}},\]
coincides with the whole $\mu_{f,H}$-dimensional $\mathbb{C}$-vector space:
\begin{equation}
\label{c}
\pi(M)=\frac{H''_{G,H}}{\frak{m}H''_{G,H}}.
\end{equation} 
Here $\frak{m}$ is the maximal ideal in $\mathcal{O}_{\mathbb{C}\times \mathbb{C}^k\times \mathbb{C},0}$. But according to Proposition \ref{pr} again, there is an isomorphism of $\mu_{f,H}$-dimensional vector spaces:
\[\frac{H''_{G,H}}{\frak{m}H''_{G,H}}\cong \frac{H''_{f,H}}{fH''_{f,H}}.\]
Thus the condition (\ref{c}) above reduces to the condition:
\begin{equation}
\label{c1}
\pi(M)=\frac{\text{span}_{\mathbb{C}\{f\}}\{\omega,\partial_{\lambda_1}F\omega,...,\partial_{\lambda_k}F\omega\}}{fH''_{f,H}}=\frac{H''_{f,H}}{fH''_{f,H}},
\end{equation} 
which is in turn equivalent, by Nakayama's lemma, to the assumption (\ref{iv}) of infinitesimal isochore versality of $F$. Thus we have proved:
\begin{prop}
\label{pr1}
An infinitesimally isochore versal deformation of a boundary singularity is isochore rigid.
\end{prop}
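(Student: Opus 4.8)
The statement to prove is that every one-parameter deformation $G=G_t$ of an infinitesimally isochore versal deformation $F$ of $(f,H)$ is isochore trivial, which by definition is what isochore rigidity means. The plan is to combine a Moser-type path argument with the parametric relative Brieskorn module $H''_{G,H}$. First I would record the elementary implication already noted in the text: if the homological equation (\ref{he}) admits a solution $(k,c_1,\dots,c_k,v)$ with $v$ a relative vector field tangent to $H$ and preserving $\omega$, then integrating $v$ in the variable $t$, together with the base reparametrisation generated by $k$ and the $c_i$, produces a family of $\mathcal{A}_{\omega,H}$-equivalences trivialising $G_t$; here one only uses that the flow of an $\omega$-preserving vector field tangent to $H$ stays inside $\mathcal{A}_{\omega,H}$. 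So it suffices to solve (\ref{he}).

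Next I would translate (\ref{he}) into a membership statement in $H''_{G,H}$ by multiplying through by the top form $\tilde{\omega}=\omega\wedge d\lambda_1\wedge\cdots\wedge d\lambda_k\wedge dt$. The key computation is that $\partial_t G\,\tilde{\omega}$ and $L_vG\,\tilde{\omega}$ behave as expected: since $v$ has no $\lambda$- or $t$-components and preserves $\omega$, one has $L_v\tilde{\omega}=0$, so $L_vG\,\tilde{\omega}=L_v(G\tilde{\omega})=d\bigl(v\lrcorner(G\tilde{\omega})\bigr)$ because $G\tilde{\omega}$ is of top degree; moreover $v\lrcorner\omega$ is a closed $n$-form vanishing on $H$, so by the Poincar\'e lemma for the subcomplex $\Omega^\bullet(H)$ it equals $d\beta$ with $\beta\in\Omega^{n-1}(H)$, whence $L_vG\,\tilde{\omega}=\pm\,d\lambda_1\wedge\cdots\wedge d\lambda_k\wedge dt\wedge dG\wedge d\beta$, which lies in the submodule defining $H''_{G,H}$. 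Conversely, any relatively exact correction of this form is realised as $L_vG\,\tilde{\omega}$ by taking $v$ to be the unique vector field with $v\lrcorner\omega=d\beta$ (nondegeneracy of $\omega$), which is automatically tangent to $H$ and $\omega$-preserving because $d\beta$ is closed and lies in $\Omega^n(H)$. Hence solvability of (\ref{he}) is equivalent to $\partial_t G\,\tilde{\omega}\in M:=\text{span}_{\mathbb{C}\{G,\lambda,t\}}\{\tilde{\omega},\partial_{\lambda_1}G\,\tilde{\omega},\dots,\partial_{\lambda_k}G\,\tilde{\omega}\}$.

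It therefore remains to prove $M=H''_{G,H}$. By Proposition \ref{pr}, $H''_{G,H}$ is a finitely generated $\mathbb{C}\{G,\lambda,t\}$-module, so by Nakayama's lemma it is enough to show that the image $\pi(M)$ of $M$ under $H''_{G,H}\to H''_{G,H}/\mathfrak{m}H''_{G,H}$ is everything. Using the isomorphism $H''_{G,H}/\mathfrak{m}H''_{G,H}\cong H''_{f,H}/fH''_{f,H}$ of Proposition \ref{pr}, $\pi(M)$ corresponds to the image of $\text{span}_{\mathbb{C}\{f\}}\{\omega,\partial_{\lambda_1}F\,\omega,\dots,\partial_{\lambda_k}F\,\omega\}$ in $H''_{f,H}/fH''_{f,H}$, because $\tilde{\omega}$ and the $\partial_{\lambda_i}G\,\tilde{\omega}$ restrict at $\lambda=t=0$ to $\omega$ and the $\partial_{\lambda_i}F\,\omega$, while the scalars specialise to $\mathbb{C}\{f\}$. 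A second application of Nakayama's lemma, now to the finitely generated $\mathbb{C}\{f\}$-module $H''_{f,H}$, shows that this image is all of $H''_{f,H}/fH''_{f,H}$ precisely when $H''_{f,H}=\text{span}_{\mathbb{C}\{f\}}\{\omega,\partial_{\lambda_1}F\,\omega,\dots,\partial_{\lambda_k}F\,\omega\}$, that is, precisely under the infinitesimal isochore versality hypothesis (\ref{iv}). Thus $M=H''_{G,H}$; in particular $\partial_t G\,\tilde{\omega}\in M$, the equation (\ref{he}) is solvable, and $G_t$ is isochore trivial.

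I expect the main obstacle to be the second step, namely upgrading the obvious implication of the text to the genuine equivalence between isochore triviality of $G_t$ and the module membership $\partial_t G\,\tilde{\omega}\in M$; the delicate direction is the construction, from an arbitrary relatively exact correction $d\lambda_1\wedge\cdots\wedge d\lambda_k\wedge dt\wedge dG\wedge d\beta$, of a genuine $\omega$-preserving vector field tangent to $H$ that produces it, which rests on the Poincar\'e lemma for the subcomplex $\Omega^\bullet(H)$ (to keep $\beta$ among forms vanishing on $H$) and on the nondegeneracy of $\omega$. Once this is in place, and granting the finiteness and base-change statements of Proposition \ref{pr}, the remainder is just the two Nakayama reductions above.
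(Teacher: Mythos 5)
Your proposal is correct and follows the paper's own route: reduce isochore triviality of a one-parameter deformation $G_t$ to solvability of the homological equation (\ref{he}), translate that via multiplication by $\tilde{\omega}$ into the membership $\partial_tG\,\tilde{\omega}\in M$ in $H''_{G,H}$, and then deduce $M=H''_{G,H}$ from infinitesimal versality by two applications of Nakayama's lemma together with the finiteness and base-change statements of Proposition \ref{pr}. The only difference is that you spell out the equivalence the paper dismisses with ``as is easily seen'' — namely that $L_vG\,\tilde{\omega}$ sweeps out exactly the denominator $d\lambda^k\wedge dt\wedge dG\wedge d\Omega^{n-1}(H)$, using the Poincar\'e lemma for $\Omega^{\bullet}(H)$ and the nondegeneracy of $\omega$ — which is a worthwhile but compatible elaboration rather than a different argument.
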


\subsubsection{Proof of the Isochore Versal Deformation Theorem and Corollaries}
\begin{proof}[Proof of Theorem \ref{t7}]
It goes exactly as in \cite{G1} and relies in a standard trick of J. Martinet  which can be adapted with no problem to the boundary case: let $F$ be a deformation of $(f,H)$, $f=F(.,0)$ and $G$ another deformation of $(f,H)$. Define the sum $F\oplus G$ by:
\[F\oplus G(x,y;\lambda,\lambda')=F(x,y;\lambda)+G(x,y;\lambda')-f(x,y).\]
The restriction of $F\oplus G$ on $\lambda=0$ is equal to $G$ and thus, in order to show that $G$ is isochore equivalent to a deformation induced by $F$, it suffices to show that the deformation $F\oplus G$ is an isochore trivial deformation of $F$.  This can be shown inductively as follows: denote by $F_j$ the restriction of $F\oplus G$ to $\{\lambda_j=...=\lambda_k=0\}$. Then $F_1=F$ and $F_k=F\oplus G$. It follows from Proposition \ref{pr1} that for each $j$ the deformation $F_{j-1}$ is isochore rigid and thus $F_j$ is an isochore trivial deformation of $F_{j-1}$. We conclude by induction that $F_k$ is an isochore trivial deformation of $F_1$.    
\end{proof}

As an immediate corollary we obtain another proof of the relative isochore Morse lemma \ref{t6}: consider $f_t=f_0+th$, $t\in [0,1]$, a 1-parameter deformation of $f_0$, $f_1=f$, such that $f_t|_H$ has a nondegenerate critical point at the origin for all $t$.  Then for any point $t_0 \in [0,1]$ the germ at $t_0$ of the deformation $f_{t}$ is an isochore trivial deformation of $f_{t_0}$. Indeed, the relative Brieskorn module $H''_{f_t,H}$ is generated by the class of the form $dx\wedge dy^n\wedge dt$  and the claim follows from the isochore deformation theorem. Thus, for any $\epsilon$ sufficiently small, the germ $f_{t_0+\epsilon}$ is isochore equivalent to $f_{t_0}$, and thus $f_0$ is isochore equivalent to $f_1$ as well.  

As another immediate corollary we obtain also a relative version of a theorem of Y. Colin de Verdi\`ere \cite{C}, i.e. that a versal deformation of a quasihomogeneous boundary singularity is isochore versal. Indeed, in this case there is an isomorphism (c.f. Example \ref{ex}):
\[\frac{H''_{f,H}}{fH''_{f,H}}\cong \mathcal{Q}_{f,H}\]
and thus the classes of 1 with the initial velocities of the deformation generate the isochore Jacobian module $I^1_{f,H}$.

\section*{Acknowledgements}
The author is grateful to Mauricio D. Garay for several useful discussions and his attention on the problem. The author would also like to thank the referee for his accurate suggestions.\\
 ``This research has been supported (in part) by EU Marie-Curie IRSES Brazilian-European partnership in Dynamical Systems (FP7-PEOPLE-2012-IRSES 318999 BREUDS)''.

\end{document}